\documentclass [12pt] {article}
\usepackage{amsthm,amsmath}
\usepackage{epsfig}
\usepackage{amssymb}
\usepackage[bottom]{footmisc}
\newcommand {\RR} {\mathbb R}
\newcommand {\NN} {\mathbb N}
\newcommand {\ZZ} {\mathbb Z}

\newtheorem {theorem} {Theorem}
\newtheorem {lemma} {Lemma}
\newtheorem {prop} {Proposition}
\newtheorem {cor} {Corollary}
\newtheorem{rem}{Remark}

\newcommand {\ttbox} [1] {\mbox {\ttfamily*1}} 

\begin{document}

\title{On the Hardy-Littlewood-P\'olya and Taikov type inequalities for multiple operators in Hilbert spaces}

\author{Vladislav Babenko\footnotemark[1]\footnotetext[1]{Oles Honchar Dnipro National University} , Yuliya Babenko\footnotemark[2]\footnotetext[2]{Kennesaw State University} , Nadiia Kriachko\footnotemark[1] ,\\ Dmytro Skorokhodov\footnotemark[1]}
\date{}

\maketitle

\begin{abstract}
    We present unified approach to obtain sharp mean-squared and multiplicative inequalities of Hardy-Littlewood-P\'olya and Taikov types for multiple closed operators acting on Hilbert space. We apply our results to establish new sharp inequalities for the norms of powers of the Laplace-Beltrami operators on compact Riemannian manifolds and derive the well-known Taikov and Hardy-Littlewood-P\'olya inequalities for functions defined on $d$-dimensional space in the limit case. Other applications include the best approximation of unbounded operators by linear bounded ones and the best approximation of one class by elements of other class. In addition, we establish sharp Solyar-type inequalities for unbounded closed operators with closed range. \\~\\
    {\sc Keywords: \it Kolmogorov-type inequalities, Solyar-type inequalities, Stechkin problem, Laplace-Beltrami operator, closed operators. }\\~\\
    MSC2010 (primary): 47A63, 41A35, 26D10.\\
    MSC2010 (secondary): 41A65.
\end{abstract}

\section{Introduction}


Inequalities for the norms of derivatives estimate the norm of intermediate function derivative in terms of the norms of the function itself and its higher order derivative(s). Its emergence can be traced back to the theorem on derivatives established independently by A.~Knezer in 1896 and J.~Hadamard in 1897 and rediscovered lately by G.\,H.~Hardy and I.\,E.~Littlewood~\cite{HarLit_12} (see also~\cite[\S1]{BKKP}). First sharp constants in inequalities for the norms of derivatives were obtained by E.~Landau~\cite{Lan_13} in 1913 and independently by J.~Hadamard~\cite{Had_14} in 1914. These results were followed by the works of G.\,H.~Hardy, J.\,E.~Littlewood and G.~P\'olya~\cite{HarLitPol_34}, G.\,E.~Shilov~\cite{Shi_37}. One of the most outstanding and fundamental results in this area was established by A.\,N.~Kolmogorov~\cite{Kol_38,Kol_39}. Thanks to his contribution, such inequalities became widely known as Kolmogorov type inequalities. 

Since then inequalities for the norms of derivatives were studied by many famous mathematicians. Among them were S.\,B.~Stechkin, E.\,M.~Stein, L.\,V.~Taikov, Yu.\,I.~Lubich, I.\,J.~Schoenberg, V.\,V.~Arestov, N.\,P.~Kuptsov, A.\,A. Ligun, A.~Pinkus, V.\,M.~Tihomirov, M.\,K.~Kwong, A.~Zettl, A.\,P.~Buslaev, G.\,G.~Magaril-Illyaev, V.\,F.~Babenko, S.\,A.~Pichugov, V.\,A.~Kofanov,  A.\,Yu.~Shadrin, B.\,D.~Bojanov and many others. 

Large attention and interest of mathematical community to the Kolmogorov type inequalities is not surprising. Such inequalities have deep relations with other areas of Mathematical Analysis such as Approximation Theory, embedding theorems, theory of ill-posed problems, theory of optimal recovery, theory of optimal algorithms, and find applications in ODE and PDE. Furthermore, the methods developed for their proof find numerous applications in the study of other extremal problems of Mathematical Analysis. 

Also, inequalities for the norms of derivatives have certain analogues in Functional Analysis. In particular, L.~H\"ormander obtained the following result~\cite[Theorem~1.1]{Hor_55} (see also~\cite[\S 2.6]{Yos_95}).
\begin{theorem}
	Let $X,Y,Z$ be Banach spaces, $A:X\to Y$ be closable operator with domain $\mathcal{D}(A)$ (see definition in~\cite[\S2.6]{Yos_95}), $B:X\to Z$ be closed operator with domain $\mathcal{D}(B)$ (see definition in~\cite[\S2.6]{Yos_95}), and $\mathcal{D}(B) \subset\mathcal{D}(A)$. Then there exists $C > 0$ such that 
	\[
		\|Ax\|_Y \leqslant C\left(\|x\|_X + \|Bx\|_Z\right),\qquad x\in\mathcal{D}(A).
	\]
\end{theorem}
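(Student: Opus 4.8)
The plan is to deduce the inequality from the closed graph theorem by transferring the problem onto the graph space of $B$. First I would equip the domain $\mathcal{D}(B)$ with the graph norm $\|x\|_B := \|x\|_X + \|Bx\|_Z$. Since $B$ is closed, the normed space $(\mathcal{D}(B),\|\cdot\|_B)$ is complete, hence a Banach space: any $\|\cdot\|_B$-Cauchy sequence $(x_n)$ is simultaneously Cauchy in $X$ and $(Bx_n)$ is Cauchy in $Z$, so $x_n\to x$ in $X$ and $Bx_n\to z$ in $Z$, and closedness of $B$ forces $x\in\mathcal{D}(B)$ with $Bx=z$, i.e. $x_n\to x$ in $\|\cdot\|_B$.

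Next, because $\mathcal{D}(B)\subset\mathcal{D}(A)$, the operator $A$ restricts to a well-defined linear map $\tilde{A}\colon(\mathcal{D}(B),\|\cdot\|_B)\to Y$, $\tilde A x := Ax$. The whole argument then reduces to showing that $\tilde A$ is bounded, for in that case $\|Ax\|_Y\leqslant C\|x\|_B = C(\|x\|_X+\|Bx\|_Z)$ for all $x\in\mathcal{D}(B)$, which is exactly the claimed inequality with $C=\|\tilde A\|$.

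To obtain boundedness I would invoke the closed graph theorem, so it remains to verify that $\tilde A$ is a closed operator between the Banach spaces $(\mathcal{D}(B),\|\cdot\|_B)$ and $Y$. Suppose $x_n\to x$ in $\|\cdot\|_B$ and $\tilde A x_n = Ax_n\to y$ in $Y$. From $\|\cdot\|_B$-convergence we have in particular $x_n\to x$ in $X$; together with $Ax_n\to y$ and the closability of $A$, the closure $\overline{A}$ satisfies $x\in\mathcal{D}(\overline A)$ and $\overline A x = y$. Since the $\|\cdot\|_B$-limit $x$ lies in $\mathcal{D}(B)\subset\mathcal{D}(A)$ and $\overline A$ extends $A$, we get $y=\overline A x = Ax = \tilde A x$, proving that $\tilde A$ is closed.

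The main obstacle, and the only place where closability of $A$ (rather than closedness) is genuinely used, is precisely this closedness check for $\tilde A$: one must ensure that the limit $x$ already belongs to $\mathcal{D}(A)$, which is guaranteed by the inclusion $x\in\mathcal{D}(B)\subset\mathcal{D}(A)$, so that the potentially ambiguous behaviour of a merely closable operator cannot arise — the value $Ax$ is unambiguously defined and must equal $y$. Everything else is routine: the completeness of the graph space and the direct application of the closed graph theorem.
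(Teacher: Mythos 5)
Your proof is correct and follows essentially the same route as the source the paper cites for this theorem: the paper states it without proof, referring to H\"ormander and to Yosida~\cite[\S2.6]{Yos_95}, and the argument given there is exactly yours --- equip $\mathcal{D}(B)$ with the graph norm $\|x\|_X+\|Bx\|_Z$ (complete since $B$ is closed), observe that the restriction of $A$ to this Banach space is closed because $A$ is closable and $\mathcal{D}(B)\subset\mathcal{D}(A)$ pins the limit inside $\mathcal{D}(A)$, and apply the closed graph theorem. One small point in your favour: the inequality is only meaningful for $x\in\mathcal{D}(B)$, which is what your proof delivers and what the original formulation asserts; the ``$x\in\mathcal{D}(A)$'' in the paper's statement is a misprint, since $Bx$ is undefined off $\mathcal{D}(B)$.
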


Nowadays results of E.~Landau and J.~Hadamard are generalized in various directions: for uniform and integral norms, for derivatives of integral and fractional orders, for derivatives of multivariate functions, for powers of the Laplace-Beltrami operators defined on manifolds, for powers of infinitesimal generators of semigroups, for powers of self-adjoint operators and abstract linear operators acting in Hilbert and Banach spaces. For a thorough overview of the history of the topic and known results, discussion of related problems and further references, we refer the interested reader to the books~\cite{Mit_Pec_Fin_91, Kwo_Zet_92, BKKP} and surveys~\cite{AreGab_95, Are_96, BabKofPic_97, BabKofPic_98}, and also to papers~\cite{Ily_05, IlyTit_06} for some interesting applications in PDEs. 

The most fruitful setting where a lot of sharp inequalities are established is concerned with operators acting in Hilbert spaces. Under this setting, either the Hilbertian norm of intermediate function derivative is estimated in terms of Hilbertian norms of the function itself and its higher order derivatives, or the value of some given linear functional on the intermediate function derivative is estimated in terms of Hilbertian norms of the function itself and its higher order derivatives. In what follows we will refer to the former estimate as the Hardy-Littlewood-P\'olya inequality and to the later estimate as the Taikov type inequality. We list only some of the advances in this setting:
\begin{enumerate}
	\item Results of G.\,H.~Hardy, J.\,E.~Littlewood, G.~P\'olya~\cite{HarLitPol_34} and L.\,V.~Taikov~\cite{Tai_68} concerning derivatives of functions defined on $\mathbb{R}$. Case of fractional order derivatives was considered by V.\,M.~Tihomirov and A.\,P.~Buslaev~\cite{BusTih_79};
	\item Results of Ju.\,I.~Ljubi\^c~\cite{Lub_60}, M.\,P.~Kuptsov~\cite{Kup_75}, V.\,N.~Gabushin~\cite{Gab_69}, G.\,A.~Kalyabin~\cite{Kal_04}, A.\,A.~Lunev and L.\,L.~Oridoroga~\cite{Lun_Ori_09} concerning derivatives of functions defined on the non-negative half-line $[0,+\infty)$. Case of fractional order derivatives was considered by V.\,M.~Tihomirov and G.\,G.~Magaril-Il'yaev~\cite{MagTih_81};
	\item Results of G.\,H.~Hardy, J.\,E.~Littlewood and G.~P\'olya~\cite{HarLitPol_34} and\break A.\,Yu.~Shadrin~\cite{Sha_90} concerning derivatives of periodic functions;
	\item Results of V.\,M.~Tihomirov and A.\,P.~Buslaev~\cite{BusTih_79}, A.\,A.~Ilyin~\cite{Ily_98}, A.\,A.~Lunev~\cite{Lun_09} concerning derivatives of functions defined on $\mathbb{R}^d$ or the octants in $\mathbb{R}^d$;
	\item Results of A.\,A.~Ilyin~\cite{Ily_98} concerning the powers of the Laplace-Beltrami operators on the sphere $\mathbb{S}^d$;
	\item Results of V.\,F.~Babenko and R.\,O.~Bilichenko~\cite{BabBil_10,BabBil_11} concerning the powers of self-adjoint and normal operators in Hilbert spaces;
	\item Results of V.\,F.~Babenko and N.\,A.~Kriachko~\cite{BabKri_14} and V.\,F.~Babenko, Yu.\,V.~Babenko and N.\,A.~Kriachko~\cite{BabBabKri_16} concerning functions of self-adjoint operators in Hilbert spaces.
\end{enumerate}

In this paper we demonstrate a simple unified yet powerful approach to obtain sharp mean-squared and multiplicative Hardy-Littlewood-P\'olya and Taikov type inequalities. We apply it to prove some of well-known inequalities listed above and also establish a series of new inequalities, in particular, for the norms of the powers of the Laplace-Beltrami operator on Riemannian manifolds and CROSS spaces. 

The paper is organized as follows. We introduce necessary notations and definitions in the next section. In Section~\ref{Sec:Taikov} we obtain sharp mean-squared Hardy-Littlewood-P\'olya and Taikov type inequalities for operators acting in Hilbert spaces. Simultaneously we obtain sharp additive Taikov type inequality and solve related Stechkin problem of the best approximation of unbounded functionals by linear bounded ones on sets defined with the help of self-adjoint operators. We demonstrate how to establish sharp multiplicative inequalities as consequences from corresponding mean-squared inequalities in Section~\ref{Sec:Multiplicative}. In Section~\ref{Sec:Applications} we provide applications of main results to finding the best approximation of a class by elements from another class and to establishing sharp inequalities between the norms of powers of the Laplace-Beltrami operators on Riemannian manifolds and fractional differentiation operators on $\mathbb{R}^d$. Finally, in Section~\ref{Sec:Solyar} we establish sharp Solyar-type inequalities for closed operators having closed range, and obtain sharp Solyar-type inequality for the powers of the Laplace-Beltrami operators on Riemannian manifolds as consequence. 

\section{Preliminaries}

Let $\mathbb{R}_+ = (0,+\infty)$, $H$ be a separable Hilbert space over $\mathbb{C}$ endowed with scalar product $(\cdot,\cdot)_H$, norm $\|\cdot\|_H$ and orthonormal basis $\left\{e_{n}\right\}_{n\in M}$, where $M$ is a finite or countable set. For definiteness but without loss of generality, we fix the meaning of the sum of series indexed by the set $M$. Let $\left\{M_N\right\}_{N=1}^\infty$ be any given sequence of nested finite subsets of $M$ such that $M = \bigcup\limits_{N=1}^\infty M_N$. Then, for a sequence $\left\{a_{n}\right\}_{n\in M}\subset H$ or $\left\{a_{n}\right\}_{n\in M}\subset \RR$, we set: 
\[
	\sum_{n\in M} a_n := \lim\limits_{N\to\infty} \sum\limits_{n\in M_N} a_n
\]
providing the limit in the right hand part exists. In what follows the choice of the sequence $\left\{M_N\right\}$ is not essential as only absolutely convergent series will be considered. For $x\in H$, by $x_n := (x,e_n)_H$, $n\in M$, we denote its Fourier coefficients with respect to the system $\{e_n\}_{n\in M}$ and by $S_Nx := \sum\limits_{n\in M_N} x_ne_n$, $N\in\mathbb{N}$, -- partial sums of the Fourier series of $x$.

For convenience, for sequences $\left\{a_{n}\right\}_{{n}\in M},\left\{b_{n}\right\}_{{n}\in M}\subset[0,+\infty)$, we denote
\[
	\widetilde{\sum\limits_{n\in M}}\frac{a_{n}}{b_{n}} := \left\{\begin{array}{ll} +\infty, & \textrm{if } \exists n_0\in M \textrm{ s.t. } (a_{n_0}\ne 0) \wedge (b_{n_0} = 0),\\[7pt] \displaystyle\sum\limits_{n\in M\,:\,b_n\ne 0} \frac{a_n}{b_n}, & \textrm{otherwise},\end{array}  \right.
\]
and
\[
	\widetilde{\sup\limits_{n\in M}} \frac{a_n}{b_n} := \left\{\begin{array}{ll}+\infty, & \textrm{if } \exists n_0\in M \textrm{ s.t. } (a_{n_0}\ne 0) \wedge (b_{n_0} = 0),\\[7pt] \displaystyle\sup\limits_{n\in M\,:\,b_n\ne 0} \frac{a_n}{b_n}, & \textrm{otherwise}.\end{array}\right.
\]
When set $\{n\in M\,:\,b_n\ne0\}$ is empty we define $\widetilde{\sum\limits_{n\in M}}\frac{a_n}{b_n} := 0$ and $\widetilde{\sup\limits_{n\in M}}\frac{a_n}{b_n} := 0$.


Next, we let $H'$ be a Hilbert space over $\mathbb{C}$ endowed with scalar product $\left(\cdot,\cdot\right)_{H'}$ and norm $\|\cdot\|_{H'}$. For $m\in\ZZ_+$, consider linear operators $B_j:H\to H'$, $j=0,\ldots,m$, with domains $\mathcal{D}\left(B_j\right)$. We will require operators $B_0,\ldots,B_m$ to satisfy some of conditions below:
\begin{enumerate}
	\item [(B1)] $\forall n\in M\Rightarrow$ $e_{n} \in \bigcap\limits_{j=0}^m \mathcal{D}\left(B_j\right) =: \mathcal{D}_{\bf B}$;
	\item [(B2)] $\forall n',n''\in M$, $n'\ne n''$, and $\forall j\in\{0,1,\ldots,m\}$ $\Rightarrow$ $\left(B_je_{{n}'}, B_j e_{{n}''}\right)_{H'} = 0$;
	\item [(B3)] $\exists j_0\in\{0,\ldots,m\}$ and $\exists n_0\in M$ $\Rightarrow$ $B_{j_0}e_{n_0}\ne 0$;
	\item [(B4)] $B_0,B_1,\ldots,B_m$ are closed operators~(see, {\it e.g.}~\cite[\S 2.6]{Yos_95}).
\end{enumerate}

For $j=0,\ldots,m$, we consider the subspace
\[
	H_j := \left\{x\in H\,:\,\|x\|^2_{H_j} := \sum\limits_{{n}\in M} \left|x_{n}\right|^2\left\|B_je_{n}\right\|_{H'}^2 < \infty\right\},
\]
set $H_{\bf B} := \bigcap\limits_{j=0}^m H_j$, and endow $H_{\bf B}$ with the (semi-)norm $\|\cdot\|_{{\bf B},{\bf h}}$, where ${\bf h} = (h_0,\ldots,h_m)\in\RR_+^{m+1}$:
\[
	\|x\|_{{\bf B}, {\bf h}}^2 := \sum\limits_{j=0}^m h_j \sum\limits_{n\in M} |x_n|^2\cdot\left\|B_je_n\right\|_{H'}^2 = \sum\limits_{n\in M} |x_n|^2\cdot b_{n,\bf h},\quad x\in H_{\bf B},
\]
where
\[
	b_{{n},{\bf h}} := \sum\limits_{j=0}^m h_j\left\|B_j e_{n}\right\|_{H'}^2,\qquad n\in M.
\]
For convenience, set ${\bf 1} := (1,\ldots,1)\in \RR^{m+1}_+$. Clearly, norms $\|\cdot\|_{{\bf B},{\bf h}}$ are equivalent to $\|\cdot\|_{{\bf B}, {\bf 1}}$. However, if needed, we will supplement the notation of the space $H_{\bf B}$ with the subscript ${\bf h}$ to emphasize the norm considered. 

The next 
proposition provides constructive characterization of elements of $H_{\bf B}$, establishes embedding $H_{\bf B}\subset \mathcal{D}_{\bf B}$ and presents sufficient conditions for the coincidence of these sets.

\begin{lemma}
	\label{Bessel_inequality}
	Let $B_0,B_1,\ldots,B_m$ satisfy conditions {\rm (B1)}, {\rm (B2)} and~{\rm (B4)}. Then $H_{\bf B}\subset \mathcal{D}_{\bf B}$ and, for every $x\in H_{\bf B}$, 
	\[
		B_jx = \sum\limits_{{n}\in M} x_n\cdot B_je_{n},\qquad j=0,1,\ldots,m,
	\]
	\begin{equation}
	\label{prenorm_representation}
		\|x\|_{{\bf B}, {\bf h}}^2 = \sum\limits_{j=0}^m h_j\left\|B_jx\right\|_{H'}^2,\quad {\bf h}\in\mathbb{R}_+^{m+1}.
	\end{equation}
	Furthermore, if $B_je_n\in\mathcal{D}\left(B_j^\ast\right)$ (for the definition of the adjoint operator see, {\it e.g.}~\cite[\S7.1]{Yos_95}) for every $n\in M$ and $j=0,1,\ldots,m$, then $\mathcal{D}_{\bf B} = H_{\bf B}$.
\end{lemma}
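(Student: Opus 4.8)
The plan is to treat the assertions separately: the embedding $H_{\bf B}\subset\mathcal{D}_{\bf B}$ together with the expansion formulas will come from closedness, while the reverse inclusion will come from the adjoint hypothesis combined with Bessel's inequality. For the first part, fix $x\in H_{\bf B}$ and $j\in\{0,\ldots,m\}$. By condition~(B2) the family $\{B_je_n\}_{n\in M}$ is orthogonal in $H'$, so by the Pythagorean identity the partial sums satisfy $\bigl\|\sum_{n\in M_{N'}\setminus M_N} x_nB_je_n\bigr\|_{H'}^2 = \sum_{n\in M_{N'}\setminus M_N}|x_n|^2\|B_je_n\|_{H'}^2$. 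Since $x\in H_j$, the series $\sum_n|x_n|^2\|B_je_n\|_{H'}^2$ converges, whence these partial sums form a Cauchy sequence and $y_j:=\sum_{n\in M}x_nB_je_n$ converges in $H'$.

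Next I would invoke closedness. The partial sums $S_Nx=\sum_{n\in M_N}x_ne_n$ converge to $x$ in $H$ because $\{e_n\}$ is an orthonormal basis; each $S_Nx$ lies in $\mathcal{D}(B_j)$ by~(B1), and $B_j(S_Nx)=\sum_{n\in M_N}x_nB_je_n\to y_j$ by the previous step. As $B_j$ is closed by~(B4), this forces $x\in\mathcal{D}(B_j)$ and $B_jx=y_j=\sum_{n\in M}x_nB_je_n$. Since $j$ was arbitrary, $x\in\mathcal{D}_{\bf B}$, which yields $H_{\bf B}\subset\mathcal{D}_{\bf B}$ and the claimed expansion of $B_jx$. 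The norm identity~\eqref{prenorm_representation} then follows by continuity of the norm and orthogonality: $\|B_jx\|_{H'}^2=\|y_j\|_{H'}^2=\sum_n|x_n|^2\|B_je_n\|_{H'}^2$, and summing against the weights $h_j$ reproduces $\|x\|_{{\bf B},{\bf h}}^2$ by the definition of $b_{n,{\bf h}}$.

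For the reverse inclusion under the extra assumption $B_je_n\in\mathcal{D}(B_j^\ast)$, fix $x\in\mathcal{D}_{\bf B}$ and $j$; the goal is to show $x\in H_j$. The crux is to identify $B_j^\ast B_je_n$. For any $n'\in M$ the adjoint relation gives $(B_je_{n'},B_je_n)_{H'}=(e_{n'},B_j^\ast B_je_n)_H$, and by~(B2) the left side equals $0$ for $n'\ne n$ and $\|B_je_n\|_{H'}^2$ for $n'=n$; reading off Fourier coefficients yields $B_j^\ast B_je_n=\|B_je_n\|_{H'}^2e_n$. Consequently, for $x\in\mathcal{D}(B_j)$ we get $(B_jx,B_je_n)_{H'}=(x,B_j^\ast B_je_n)_H=\|B_je_n\|_{H'}^2\,x_n$. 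Applying Bessel's inequality to $B_jx\in H'$ with respect to the orthonormal system $\{B_je_n/\|B_je_n\|_{H'}\}_{n:\,B_je_n\ne0}$ then gives $\sum_n|x_n|^2\|B_je_n\|_{H'}^2\leqslant\|B_jx\|_{H'}^2<\infty$, so $x\in H_j$; intersecting over $j$ yields $\mathcal{D}_{\bf B}\subset H_{\bf B}$, and hence equality.

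The convergence and closedness arguments are routine; the step I expect to require the most care is the identification $B_j^\ast B_je_n=\|B_je_n\|_{H'}^2e_n$ and the ensuing formula $(B_jx,B_je_n)_{H'}=\|B_je_n\|_{H'}^2x_n$, since this is exactly where the adjoint hypothesis and the orthogonality~(B2) must be combined to set up Bessel's inequality. One should also verify the edge cases hidden in the tilde-conventions, namely indices with $B_je_n=0$, which are harmless but must be excluded when forming the orthonormal system.
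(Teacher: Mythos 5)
Your proposal is correct and follows essentially the same route as the paper: closedness of $B_j$ applied to the Cauchy sequence of partial sums $B_j(S_Nx)$ for the embedding $H_{\bf B}\subset\mathcal{D}_{\bf B}$ and the expansion formulas, and the identification $B_j^\ast B_je_n=\left\|B_je_n\right\|_{H'}^2e_n$ (the paper writes this as $B_j^\ast B_je_n=\lambda_ne_n$ with $\lambda_n=\left\|B_je_n\right\|_{H'}^2$) followed by Bessel's inequality for the orthonormal system $\left\{B_je_n/\left\|B_je_n\right\|_{H'}\right\}$ for the reverse inclusion. Your explicit exclusion of the indices with $B_je_n=0$ when forming that system is in fact slightly more careful than the paper's own wording.
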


\begin{proof}
	Choose any $j\in\{0,1,\ldots,m\}$ and $x\in H_j$. Consider the sequence of partial sums $\{S_Nx\}_{N=1}^{\infty}$. Clearly, $S_Nx\in\mathcal{D}\left(B_j\right)$ by condition~(B1) and $S_Nx\to x$ in $H$ as $N\to\infty$. Also, by condition~(B2), for $N_1,N_2\in\NN$, $N_1 < N_2$, 
	\begin{gather*}
		\displaystyle \left\|B_j(S_{N_2}x) - B_j(S_{N_1}x)\right\|_{H'}^2 = \displaystyle \left\|\sum\limits_{{n}\in M_{N_2}\setminus M_{N_1}} x_n\cdot B_je_{n}\right\|_{H'}^2 \\ 
		= \displaystyle \sum\limits_{{n}\in M_{N_2}\setminus M_{N_1}} \left|x_n\right|^2\left\|B_je_{n}\right\|_{H'}^2.
	\end{gather*}
	Since $x\in H_j$, the later implies that $\left\{B_j(S_Nx)\right\}_{N=1}^\infty$ is fundamental. Completeness of $H'$ yields existence of $y\in H'$ such that $B_j(S_Nx) \to y$ as $N\to\infty$. Taking into account that $B_j$ is closed we conclude that $x\in\mathcal{D}\left(B_j\right)$ and $y = B_jx$. As a result, $H_j\subset \mathcal{D}(B_j)$ and, hence, $H_{\bf B}\subset \mathcal{D}_{\bf B}$. Using straightforward calculations we see that equality~\eqref{prenorm_representation} holds true.
	
	Assume that $B_je_n\in \mathcal{D}\left(B_j^\ast\right)$, for every $n\in M$. Let us show that $\mathcal{D}\left(B_j\right) = H_j$. To this end we choose any $x\in \mathcal{D}\left(B_j\right)$. Since $B_j$ is densely defined, its adjoint operator $B_j^\ast$ is well defined~(see, {\it e.g.}~\cite[\S7.1]{Yos_95}). It is clear that, for $m,n\in M$, $m\ne n$, 
	\[
		0 = \left(B_je_n,B_je_m\right)_{H'} = \left(B_j^\ast B_j e_n, e_m\right)_H.
	\]
	Hence, there exists $\lambda_n\in\mathbb{C}$ such that $B_j^\ast B_j e_n = \lambda_n e_n$. Consequently, 
	\[
		\left\|B_je_n\right\|_{H'}^2 = \left(B_je_n,B_je_n\right)_{H'} = \left(B_j^\ast B_je_n,e_n\right)_H = \lambda_n.
	\] 
	We let $\varphi_n := \frac{B_je_n}{\left\|B_je_n\right\|_{H'}}$ and observe that the system $\left\{\varphi_n\right\}_{n\in M}$ is orthonormal in $H'$. Then by the Bessel inequality (see, {\it e.g.}~\cite[\S3.4]{Yos_95})
	\[
		\|B_jx\|_{H'}^2 \geqslant \sum\limits_{n\in M} \left|\left(B_jx, \varphi_n\right)_{H'}\right|^2 = \sum\limits_{n\in M} \left|\left(x, B_j^\ast\varphi_n\right)_{H}\right|^2 = \sum\limits_{n\in M} \left|x_n\right|^2 \left\|B_je_n\right\|_{H'}^2,
	\]
	and we conclude that $x\in H_j$. As a result, $\mathcal{D}_{\bf B} = H_{\bf B}$.
\end{proof}

Finally we present the conditions we will require intermediate operator to satisfy. Let $X$ be a normed space over $\mathbb{C}$ with the norm $\|\cdot\|_X$, $A:H\to X$ be a linear operator with domain $\mathcal{D}(A)$ and $f\in X^\ast$ be a linear bounded functional. We will require $A$ and $f$ to satisfy some of conditions below:
%
%
%
%
%
\begin{enumerate}
	\item [(A1)] $\forall n\in M\Rightarrow e_{n}\in \mathcal{D}(A)$;
	\item [(A2)] $\exists n_0\in M$ $\Rightarrow$ $Ae_{n_0}\ne 0$;
	\item [(A3)] $A$ is closable (see, {\it e.g.}~\cite[\S2.6]{Yos_95});
	\item [(Af1)] $\exists n_0\in M$ $\Rightarrow$ $\left<f,Ae_{n_0}\right>\ne 0$;
	\item [(Af2)] the functional $g_f(x) = \left<f,Ax\right>:H\to \mathbb{C}$ with domain $\mathcal{D}(A)$ is closable.
	
\end{enumerate}

Evidently, if $f\in\mathcal{D}(A^\ast)$ then the functional $g_f$ defined in condition~(Af2) is continuous and, hence, closable.

\section{Mean-squared Hardy-Littlewood-P\'olya and Taikov type inequalities}
\label{Sec:Taikov}

In this section we present a unified approach for obtaining sharp inequalities for the norms of linear operators in Hilbert space. We consider two cases: 
\begin{enumerate}
	\item The so-called {\it Taikov type inequality} that estimates the value $\left<f,Ax\right>$ of bounded functional $f\in X^\ast$ on the image $Ax$ of element $x$ under intermediate operator $A$ in terms of the norms of images $B_0 x,B_1 x,\ldots,B_m x$ of element $x$ under operators $B_0,B_1,\ldots,B_m$;
	\item The so-called {\it Hardy-Littlewood-P\'olya type inequality} that estimates the norm of the image $Ax$ of element $x$ under intermediate operator $A$ in terms of the norms of images $B_0 x, B_1 x,\ldots, B_m x$ of element $x$ under operators $B_0,B_1,\ldots,B_m$. 
\end{enumerate}
These inequalities can be considered as abstract versions of inequalities for the norms of derivatives. In addition, we obtain the additive version of the Taikov type inequality and solve the related Stechkin problem of the best approximation of operators by linear bounded ones. 

\subsection{Mean-squared Taikov type inequality}


\begin{theorem}
\label{Taikov_type_generic}
	Let $m\in\mathbb{Z}_+$, linear operators $B_0,B_1,\ldots,B_m$ satisfy conditions~{\rm (B1)}--~{\rm (B4)}, and linear operator $A$ and functional $f\in X^\ast$ satisfy conditions {\rm (A1)}, {\rm (Af1)} and {\rm (Af2)}. Let also 
	\begin{equation}
	\label{condition_0}
		\widetilde{\sum\limits_{{n}\in M}} \frac{\left|\left<f, Ae_{n}\right>\right|^2}{b_{{n},{\bf 1}}} < \infty.
	\end{equation}
	Then, for ${\bf h}\in \mathbb{R}^{m+1}_+$ and $x\in \mathcal{D}(A)\cap H_{\bf B}$, there holds sharp 
	inequality
	\begin{equation}
	\label{mean_squared_Taikov_type_inequality_generic}
		\displaystyle \left|\left<f, Ax\right>\right| \leqslant \displaystyle \left(\widetilde{\sum\limits_{{n}\in M}} \frac{\left|\left<f, Ae_{n}\right>\right|^2}{b_{{n},{\bf h}}}\right)^{\frac 12} \|x\|_{{\bf B},{\bf h}}.
	\end{equation}
	Moreover, the extremal sequence of elements in inequality~\eqref{mean_squared_Taikov_type_inequality_generic} is delivered by 
	\begin{equation}
	\label{extremal_Taikov_element}
		x_{{\bf h},N} := \widetilde{\sum\limits_{{n}\in M_N}} \frac{\overline{\left<f, Ae_{n}\right>}}{b_{{n}, {\bf h}}}\cdot e_{n},\qquad N\in\NN. 
	\end{equation}
\end{theorem}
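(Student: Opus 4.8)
The plan is to reduce everything to the scalar quantities $c_n := \langle f, Ae_n\rangle$ and the weights $b_{n,\mathbf{h}}$, and then apply the Cauchy--Schwarz inequality term by term. First I would observe that the finiteness hypothesis~\eqref{condition_0} forces $c_n = 0$ whenever $b_{n,\mathbf{1}} = 0$ (otherwise the tilde-sum would be $+\infty$ by its very definition). Since all $h_j > 0$, the set $\{n : b_{n,\mathbf{h}} = 0\}$ coincides with $\{n : b_{n,\mathbf{1}} = 0\}$, so $c_n = 0$ there as well, and both tilde-sums reduce to honest sums over $M^+ := \{n \in M : b_{n,\mathbf{h}} \neq 0\}$; by the stated equivalence of $\|\cdot\|_{\mathbf{B},\mathbf{h}}$ with $\|\cdot\|_{\mathbf{B},\mathbf{1}}$ the finiteness in~\eqref{condition_0} is the same for every $\mathbf{h}$.

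The central step is to establish the representation $\langle f, Ax\rangle = \sum_{n \in M} x_n c_n$ for every $x \in \mathcal{D}(A) \cap H_{\mathbf{B}}$. For the partial sums one has $S_N x \in \mathcal{D}(A)$ by~(A1), and by linearity $g_f(S_N x) = \langle f, A S_N x\rangle = \sum_{n \in M_N} x_n c_n$. A single application of Cauchy--Schwarz over $M^+$,
\[
	\sum_{n \in M^+} |x_n c_n| \le \Big(\sum_{n \in M^+} |x_n|^2 b_{n,\mathbf{h}}\Big)^{1/2}\Big(\sum_{n \in M^+}\tfrac{|c_n|^2}{b_{n,\mathbf{h}}}\Big)^{1/2},
\]
shows that the series $\sum_{n} x_n c_n$ converges absolutely: the first factor is at most $\|x\|_{\mathbf{B},\mathbf{h}} < \infty$ since $x \in H_{\mathbf{B}}$, and the second is finite by~\eqref{condition_0}. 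Hence $g_f(S_N x)$ converges. Since $S_N x \to x$ in $H$ and $g_f$ is closable by~(Af2), its closure $\overline{g_f}$ satisfies $\overline{g_f}(x) = \lim_N g_f(S_N x)$; but $x \in \mathcal{D}(g_f)$, so $\overline{g_f}(x) = g_f(x) = \langle f, Ax\rangle$, which yields the representation.

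With the representation in hand, inequality~\eqref{mean_squared_Taikov_type_inequality_generic} follows at once from the same Cauchy--Schwarz estimate, bounding the first factor by the full norm $\|x\|_{\mathbf{B},\mathbf{h}}$ and the second by the tilde-sum. For sharpness I would substitute the finitely-supported elements $x_{\mathbf{h},N}$ from~\eqref{extremal_Taikov_element}, which lie in $\mathcal{D}(A) \cap H_{\mathbf{B}}$ by~(A1) and~(B1). A direct computation gives both $\langle f, Ax_{\mathbf{h},N}\rangle$ and $\|x_{\mathbf{h},N}\|_{\mathbf{B},\mathbf{h}}^2$ equal to $T_N := \sum_{n \in M_N \cap M^+} |c_n|^2/b_{n,\mathbf{h}}$, so the ratio $|\langle f, Ax_{\mathbf{h},N}\rangle|/\|x_{\mathbf{h},N}\|_{\mathbf{B},\mathbf{h}}$ equals $\sqrt{T_N}$ and tends to the constant on the right of~\eqref{mean_squared_Taikov_type_inequality_generic} as $N \to \infty$. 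Condition~(Af1) together with~\eqref{condition_0} guarantees some $n_0 \in M^+$ with $c_{n_0} \neq 0$, so $T_N > 0$ for large $N$ and these ratios are well defined.

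The main obstacle—indeed the only non-routine point—is the closability argument justifying the passage $\langle f, Ax\rangle = \lim_N \langle f, A S_N x\rangle$. Condition~(Af2) is exactly what is needed: a priori the limit of $g_f(S_N x)$ need not equal $g_f(x)$, but since $g_f$ is closable its closure is single-valued, and because $x$ already belongs to $\mathcal{D}(g_f)$ the two admissible values $\overline{g_f}(x)$ (computed via the constant sequence and via $\{S_N x\}$) must coincide. Everything else is Cauchy--Schwarz and the bookkeeping around the convention for the case $b_{n,\mathbf{h}} = 0$.
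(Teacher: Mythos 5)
Your proof is correct and follows essentially the same route as the paper's: Cauchy--Schwarz applied to the partial sums $S_Nx$, identification of $\lim_{N}\left<f,A(S_Nx)\right>$ with $\left<f,Ax\right>$ via the closability of $g_f$ from condition~(Af2) together with $x\in\mathcal{D}(g_f)$, and the same extremal sequence $x_{{\bf h},N}$ for sharpness. The only cosmetic deviations are that you obtain convergence of $\left<f,A(S_Nx)\right>$ from the absolute convergence of $\sum_n x_n\left<f,Ae_n\right>$ where the paper shows the sequence is fundamental, and that you justify $\left\|x_{{\bf h},N}\right\|_{{\bf B},{\bf h}}\ne 0$ for large $N$ by combining (Af1) with~\eqref{condition_0} (which is in fact cleaner than the paper's citation of (B3) and (Af1)).
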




\begin{rem}
	If we assume additionally that $g_f$ defined in condition~(Af2) is closed then inequality~\eqref{mean_squared_Taikov_type_inequality_generic} holds for every $x\in H_{\bf B}$.
\end{rem}

\begin{rem}
	Condition~\eqref{condition_0} implies $\left<f,Ae_{n}\right> = 0$ for every ${n}\in M$ such that $b_{{n},{\bf h}} = 0$.
\end{rem}

\begin{rem}
	Theorem~\ref{Taikov_type_generic} generalizes Theorem~1 in~\cite{BabLigShu_06}. The latter can be obtained by choosing $H' := H$, $h_0 := 1$ and $B_0 := {\rm id}_H$, where ${\rm id}_H:H\to H$ is the identity operator.
\end{rem}


\begin{proof}
	Let ${\bf h}\in\mathbb{R}_+^{m+1}$ and $x\in\mathcal{D}(A)\cap H_{\bf B}$. Condition~\eqref{condition_0} implies that the series $\widetilde{\sum\limits_{{n}\in M}} \frac{\left|\left<f, Ae_{n}\right>\right|^2}{b_{{n},{\bf h}}}$ is convergent. 
	Consider partial sums $\{S_Nx\}_{N=1}^\infty$. By conditions~(B1) and~(A1), $S_Nx\in \mathcal{D}(A)\cap H_{\bf B}$, and using the Schwarz inequality (see, {\it e.g.}~\cite[\S1.5]{Yos_95}) we obtain
	\begin{gather*}
		\left|\left<f,A(S_Nx)\right>\right| \!=\! \displaystyle \left|\sum\limits_{{n} \in M_N} \!\!\!\left<f, Ae_{n}\right> \cdot x_n\right| \leqslant \left(\widetilde{\sum\limits_{{n}\in M_N}} \frac{\left|\left<f, Ae_{n}\right>\right|^2}{b_{{n},{\bf h}}}\right)^{\frac 12}\!\!\!\left(\sum\limits_{{n}\in M_N}b_{{n},{\bf h}}\left|x_n\right|^2\right)^{\frac 12}\\ 
		\leqslant \displaystyle \left(\widetilde{\sum\limits_{{n}\in M}} \frac{\left|\left<f, Ae_{n}\right>\right|^2}{b_{{n}, {\bf h}}} \right)^{\frac 12} \|x\|_{{\bf B},{\bf h}}.
	\end{gather*}
	Applying similar arguments we can show that the sequence $\left\{\left<f,A(S_Nx)\right>\right\}_{N=1}^\infty$ is also fundamental and, hence, convergent. By condition~(Af2), the functional $g_f:H\to \mathbb{C}$ is closable and, since $S_Nx\to x$ in $H$ as $N\to\infty$, we conclude that $\left<f,A(S_Nx)\right> \to \left<f,Ax\right>$ as $N\to\infty$. Taking the limit in the left hand part of above inequality, we prove inequality~\eqref{mean_squared_Taikov_type_inequality_generic}. 
	
	Let us show sharpness of inequality~\eqref{mean_squared_Taikov_type_inequality_generic}. Indeed,
	\[
		\left<f, Ax_{{\bf h},N}\right> = \widetilde{\sum\limits_{{n}\in M_N}}\frac{\left|\left<f,Ae_{n}\right>\right|^2}{b_{{n},{\bf h}}},
	\]
	\[
		\left\|x_{{\bf h},N}\right\|_{{\bf B},{\bf h}}^2 = 
		\sum\limits_{j=0}^m h_j\widetilde{\sum\limits_{{n}\in M_N}} \frac{\left|\left<f,Ae_{n}\right>\right|^2}{b^2_{{n},{\bf h}}}\cdot \left\|B_je_{n}\right\|_{H'}^2 = \widetilde{\sum\limits_{{n}\in M_N}} \frac{\left|\left<f,Ae_{n}\right>\right|^2}{b_{{n},{\bf h}}}.
	\]
	By~(B3) and~(Af1), $\left\|x_{{\bf h},N}\right\|_{{\bf B}, \bf h}\ne 0$ for sufficiently large $N\in\mathbb{N}$. Then
	\[
		\frac{\left|\left<f,Ax_{{\bf h},N}\right>\right|^2}{\left\|x_{{\bf h}, N}\right\|^2_{{\bf B},{\bf h}}
		} = \widetilde{\sum\limits_{{n}\in M_N}} \frac{\left|\left<f,Ae_{n}\right>\right|^2}{b_{{n},{\bf h}}}\to \widetilde{\sum\limits_{{n}\in M}} \frac{\left|\left<f,Ae_{n}\right>\right|^2}{b_{{n},{\bf h}}},\quad \text{as }N\to\infty,
	\]
	which proves that inequality~\eqref{mean_squared_Taikov_type_inequality_generic} is sharp. 
\end{proof}

\subsection{Additive Taikov type inequality}

Let $p, q\in \ZZ_+$ and consider operators $C_j:H\to H'$, $j=0,1,\ldots,p$, and $D_k:H\to H'$, $k=0,1,\ldots,q$. In this subsection we establish sharp additive Taikov type inequality estimating the value of functional $f\circ A$ (defined in condition~(Af2)) on elements $x\in H$ in terms of norms $\|x\|_{{\bf C},{\bf h'}}$ and $\|x\|_{{\bf D}, {\bf h''}}$, where ${\bf h'}\in\mathbb{R}_+^{p+1}$, ${\bf h''}\in\mathbb{R}_+^{q+1}$. Simultaneously we solve the Stechkin problem on the best approximation of $f\circ A : H_{{\bf C},{\bf h'}}\to \mathbb{C}$ by linear bounded functionals on the class 
\[
	W_{{\bf D},{\bf h''}} := \left\{x\in \mathcal{D}(A)\cap H_{\bf C}\cap H_{\bf D}\,:\,\|x\|_{{\bf D},{\bf h''}} 
	\leqslant 1\right\}.
\]

For convenience, throughout this subsection we set $b_{n,{\bf h}} = c_{n, {\bf h'}} + d_{n ,{\bf h''}}$, $n\in M$, and $H_{\bf B} := H_{\bf C}\cap H_{\bf D}$.

\begin{theorem}
\label{Taikov_type}
	Let $p,q\in\mathbb{Z}_+$, both operators $C_0,\ldots,C_{p}$ and $D_0,\ldots,D_{q}$ satisfy conditions~{\rm (B1)}--{\rm (B4)}, operator $A$ and functional $f\in X^\ast$ satisfy conditions~{\rm (A1)}, {\rm (Af1)},~{\rm (Af2)}. Let also inequality~\eqref{condition_0} be fulfilled, {\it i.e.}, 
	\[
	    \widetilde{\sum\limits_{n\in M}}\frac{\left|\left<f,Ae_n\right>\right|^2}{b_{n,{\bf 1}}} < \infty.
	\]
	Then, for ${\bf h'}\in \mathbb{R}^{p+1}_+$, ${\bf h''}\in\mathbb{R}_+^{q+1}$ and $x\in \mathcal{D}(A)\cap H_{\bf B}$, there holds sharp inequality
	\begin{equation}
	\label{Taikov_type_inequality}
	    \begin{array}{rcl}
		\displaystyle \left|\left<f, Ax\right>\right| & \leqslant & \displaystyle \left(\widetilde{\sum\limits_{{n}\in M}} \frac{\left|\left<f, Ae_{n}\right>\right|^2 c_{n,{\bf h'}}}{b_{{n},{\bf h}}^2}\right)^{\frac 12} \left\|x\right\|_{{\bf C},{\bf h'}} \\ 
		& & \quad \displaystyle +  \left(\widetilde{\sum\limits_{{n}\in M}} \frac{\left|\left<f, Ae_{n}\right>\right|^2 d_{{n},{\bf h''}}}{b_{{n},{\bf h}}^2}\right)^{\frac 12}\|x\|_{{\bf D}, {\bf h''}}.
		\end{array}
	\end{equation}
	The sequence $\left\{x_{{\bf h},N}\right\}_{N=1}^\infty$ defined by~\eqref{extremal_Taikov_element} is extremal in~\eqref{Taikov_type_inequality}. 
\end{theorem}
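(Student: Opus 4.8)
The plan is to derive the additive inequality~\eqref{Taikov_type_inequality} directly from the mean-squared Taikov inequality~\eqref{mean_squared_Taikov_type_inequality_generic} of Theorem~\ref{Taikov_type_generic}, rather than repeating the whole approximation argument. The key observation is that with the bookkeeping convention $b_{n,{\bf h}} = c_{n,{\bf h'}} + d_{n,{\bf h''}}$ and $H_{\bf B} := H_{\bf C}\cap H_{\bf D}$, Theorem~\ref{Taikov_type_generic} applies verbatim to the combined family: conditions (B1)--(B4) for $C_0,\ldots,C_p$ and $D_0,\ldots,D_q$ give the needed orthogonality and closedness, so~\eqref{condition_0} holds and we already have
\[
    \left|\left<f, Ax\right>\right| \leqslant \left(\widetilde{\sum\limits_{n\in M}} \frac{\left|\left<f, Ae_n\right>\right|^2}{b_{n,{\bf h}}}\right)^{\frac 12} \left(\sum\limits_{n\in M} b_{n,{\bf h}}|x_n|^2\right)^{\frac 12}
\]
for $x\in\mathcal{D}(A)\cap H_{\bf B}$, where $\|x\|_{{\bf B},{\bf h}}^2 = \sum_n b_{n,{\bf h}}|x_n|^2 = \|x\|_{{\bf C},{\bf h'}}^2 + \|x\|_{{\bf D},{\bf h''}}^2$.

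First I would split the single sum in the right-hand factor according to $b_{n,{\bf h}} = c_{n,{\bf h'}} + d_{n,{\bf h''}}$, writing
\[
    \frac{\left|\left<f,Ae_n\right>\right|^2}{b_{n,{\bf h}}} = \frac{\left|\left<f,Ae_n\right>\right|^2 c_{n,{\bf h'}}}{b_{n,{\bf h}}^2} + \frac{\left|\left<f,Ae_n\right>\right|^2 d_{n,{\bf h''}}}{b_{n,{\bf h}}^2},
\]
which is the origin of the two coefficient sums appearing in~\eqref{Taikov_type_inequality}. The main analytic step is then to estimate the bilinear coupling: apply the Cauchy--Schwarz inequality in the two-term form
\[
    \left(\alpha^2 + \beta^2\right)^{\frac 12}\left(u^2 + v^2\right)^{\frac 12} \geqslant \alpha u + \beta v
\]
in reverse, so that the product of the combined norm with the combined coefficient sum dominates the cross-products $\alpha u + \beta v$, where $\alpha,\beta$ are the two coefficient-sum square roots and $u = \|x\|_{{\bf C},{\bf h'}}$, $v = \|x\|_{{\bf D},{\bf h''}}$. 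Concretely, I would verify that $\bigl(A_c + A_d\bigr)^{1/2}\bigl(u^2+v^2\bigr)^{1/2} \geqslant A_c^{1/2}u + A_d^{1/2}v$ fails in general, so instead the correct route is to bound the single factor $\bigl(\sum |\langle f,Ae_n\rangle|^2/b_{n,{\bf h}}\bigr)^{1/2}\|x\|_{{\bf B},{\bf h}}$ from above by the desired two-term sum; this is where care is needed.

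The cleanest way to settle the coupling is to reinterpret the problem on the orthogonal direct sum $H' \oplus H'$: define a single block operator whose action records $(\sqrt{c}\,\text{-part}, \sqrt{d}\,\text{-part})$ so that the combined norm $\|x\|_{{\bf B},{\bf h}}^2$ becomes the squared norm in the product space, and then the triangle inequality in $H'\oplus H'$ together with an elementwise application of Cauchy--Schwarz separately to the $c$-block and the $d$-block yields exactly the two summands of~\eqref{Taikov_type_inequality}. Sharpness I would obtain by plugging the same extremal sequence $\{x_{{\bf h},N}\}$ from~\eqref{extremal_Taikov_element}: since for these elements $x_n = \overline{\langle f,Ae_n\rangle}/b_{n,{\bf h}}$, the ratios $\|x_{{\bf h},N}\|_{{\bf C},{\bf h'}}^2$ and $\|x_{{\bf h},N}\|_{{\bf D},{\bf h''}}^2$ are proportional to the two coefficient sums, forcing equality in the reverse Cauchy--Schwarz step in the limit. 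The main obstacle is precisely verifying that the two-term decomposition is tight for this extremal sequence, i.e. that the vectors $(\alpha,\beta)$ and $(u,v)$ become parallel in the limit $N\to\infty$; once the extremal $x_n$ are substituted, this reduces to a direct computation showing both sides of~\eqref{Taikov_type_inequality} equal $\widetilde{\sum}_n |\langle f,Ae_n\rangle|^2/b_{n,{\bf h}}$.
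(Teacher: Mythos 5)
Your proposal, once it pivots from the (correctly abandoned) attempt to deduce \eqref{Taikov_type_inequality} from the statement of Theorem~\ref{Taikov_type_generic} to splitting the coefficients as $\left<f,Ae_n\right> = \left<f,Ae_n\right>c_{n,{\bf h'}}/b_{n,{\bf h}} + \left<f,Ae_n\right>d_{n,{\bf h''}}/b_{n,{\bf h}}$ and applying Cauchy--Schwarz separately to the two blocks before adding by the triangle inequality, is essentially the paper's own proof: the paper realizes your $c$-block as the bounded functional $G_1$ (so the first term is $\|G_1\|\cdot\|x\|_{{\bf C},{\bf h'}}$), your $d$-block as the remainder $f\circ A - G_1$ estimated in \eqref{difference_estimate} with $\mu=1$, and verifies sharpness on the same sequence $\left\{x_{{\bf h},N}\right\}$ by the same computation in which both sides of \eqref{Taikov_type_inequality} tend to $\widetilde{\sum\limits_{n\in M}}\left|\left<f,Ae_n\right>\right|^2/b_{n,{\bf h}}$. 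Two slips worth noting but not fatal: the two-term inequality you claim ``fails in general'' is in fact always true as you wrote it (it is the reverse direction, the one you would need, that fails --- though your substantive conclusion that the additive bound is strictly stronger than the mean-squared one is correct), and the blockwise Cauchy--Schwarz must be carried out on the partial sums $S_Nx$ and passed to the limit using closability~(Af2) and boundedness of the $c$-block functional, which is exactly the step the paper performs and which your appeal to Theorem~\ref{Taikov_type_generic} supplies only implicitly.
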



\begin{rem}
	If we additionally assume that $g_f$ is closed then inequality~\eqref{Taikov_type_inequality} holds for every $x\in H_{\bf B}$.
\end{rem}

Now, following~\cite{Ste_67} (see also~\cite{AreGab_95,Are_96,BKKP}) we recall the statement of the Stechkin problem on the best approximation of operators by linear bounded ones. Let $X,Y$ be Banach spaces, $A:X\to Y$ be operator, not necessarily linear, with domain $\mathcal{D}(A)$, $W\subset \mathcal{D}(A)$ be some class. For a linear bounded operator $S:X\to Y$, denote the error of approximation of $A$ by $S$ on the class $W$:
\[
	U\left(A,S;W\right) := \sup\limits_{x\in W} \|Ax - Sx\|_Y.
\]
Let $N>0$ and $\mathcal{L}(N)$ be the space of linear bounded operators $S:X \to Y$ whose norm is bounded by $N$. 

The Stechkin problem consists of finding the error of {\it the best approximation of operator $A$} by linear bounded operators on the class $W$: 
\begin{equation}
\label{Stechkin_problem}
	E_N\left(A;W\right) = \inf\limits_{S\in\mathcal{L}(N)} \sup\limits_{x\in W} \left\|Ax - Sx\right\|_{Y},
\end{equation}
and finding extremal operators $S^*\in\mathcal{L}(N)$ (if any exists) for which the infimum in the right hand part of~\eqref{Stechkin_problem} is achieved.

In~\cite[\S2]{Ste_67} S.\,B.~Stechkin established simple yet powerful lower bound for~\eqref{Stechkin_problem} in terms of modulus of continuity of operator $A$ on class $W$:
\[
	\Omega\left(\delta;A;W\right) := \left\{\|Ax\|_Y\,:\,x\in W,\,\|x\|_X\leqslant \delta\right\},\quad \delta\geqslant 0.
\]
\begin{theorem}
\label{Stechkin_lemma}
	Let $A$ be a homogeneous operator, $W\subset\mathcal{D}(A)$ be a centrally-symmetric convex set. Then for every $N\geqslant 0$ and $\delta\geqslant 0$,
	\begin{equation}
	\label{Stechkin_lower_estimate}
		E_N(A;W) \geqslant \sup\limits_{\delta\geqslant 0} \left(\Omega(\delta;A;W) - N\delta\right) = \sup\limits_{x\in W} \left(\|Ax\|_Y - N\|x\|_X\right).
	\end{equation} 
	Moreover, if there exist an element $x_0\in W$ and a linear bounded operator $S_0:X\to Y$ such that $\|Ax_0\|_Y = U\left(A,S_0;W\right) + \|S_0\|\cdot\|x_0\|_X$ then $\Omega\left(\|x_0\|_X;A;W\right) = \|Ax_0\|_Y$,
	\[
		E_{\|S_0\|}(A;W) = U\left(A,S_0;W\right) = \|Ax_0\|_Y - \|S_0\|\cdot \|x_0\|_X,
	\]
	and $S_0$ is extremal operator in problem~\eqref{Stechkin_problem} for $N=\|x_0\|_X$.
\end{theorem}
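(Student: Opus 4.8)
The plan is to derive everything from the triangle inequality together with the single bound $\|Sx\|_Y \leqslant N\|x\|_X$ valid for every $S\in\mathcal{L}(N)$, and then to close a two-sided (``sandwich'') estimate in the second part. First I would prove the lower bound in \eqref{Stechkin_lower_estimate}. Fixing $S\in\mathcal{L}(N)$ and $x\in W$, writing $Ax=(Ax-Sx)+Sx$ and applying the triangle inequality gives
\[
    \|Ax\|_Y \leqslant \|Ax-Sx\|_Y + \|Sx\|_Y \leqslant U(A,S;W) + N\|x\|_X.
\]
Hence $\|Ax\|_Y - N\|x\|_X \leqslant U(A,S;W)$ for every $x\in W$; taking the supremum over $x\in W$ and then the infimum over $S\in\mathcal{L}(N)$ yields $\sup_{x\in W}\bigl(\|Ax\|_Y - N\|x\|_X\bigr) \leqslant E_N(A;W)$, which is the claimed estimate.

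Next I would verify the equality of the two supremum representations in \eqref{Stechkin_lower_estimate}, using only the definition of $\Omega(\delta;A;W)$ as the supremum of $\|Ax\|_Y$ over $x\in W$ with $\|x\|_X\leqslant\delta$, together with the hypothesis $N\geqslant 0$. For the ``$\leqslant$'' direction, any $x$ admissible for $\Omega(\delta;A;W)$ satisfies $\|x\|_X\leqslant\delta$, so $-N\delta\leqslant -N\|x\|_X$ and therefore $\|Ax\|_Y - N\delta \leqslant \|Ax\|_Y - N\|x\|_X \leqslant \sup_{x\in W}\bigl(\|Ax\|_Y-N\|x\|_X\bigr)$; taking the supremum over admissible $x$ and then over $\delta$ gives one inequality. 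For the ``$\geqslant$'' direction, for a fixed $x\in W$ I would choose $\delta=\|x\|_X$, so that $x$ is admissible for $\Omega(\|x\|_X;A;W)$ and $\|Ax\|_Y-N\|x\|_X \leqslant \Omega(\|x\|_X;A;W)-N\|x\|_X \leqslant \sup_{\delta\geqslant 0}\bigl(\Omega(\delta;A;W)-N\delta\bigr)$; taking the supremum over $x$ establishes the reverse inequality, hence equality.

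For the ``moreover'' part I would set $N:=\|S_0\|$ and exploit the extremal identity $\|Ax_0\|_Y = U(A,S_0;W)+\|S_0\|\,\|x_0\|_X$. On one hand, the lower bound already proved, evaluated at $x_0\in W$, gives $E_N(A;W) \geqslant \|Ax_0\|_Y - N\|x_0\|_X = U(A,S_0;W)$. On the other hand, $S_0\in\mathcal{L}(N)$, so $E_N(A;W)\leqslant U(A,S_0;W)$ directly from \eqref{Stechkin_problem}. These two inequalities force $E_{\|S_0\|}(A;W)=U(A,S_0;W)=\|Ax_0\|_Y-\|S_0\|\,\|x_0\|_X$, and the infimum in \eqref{Stechkin_problem} is attained at $S_0$, so $S_0$ is extremal. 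Finally, to identify $\Omega(\|x_0\|_X;A;W)$, the bound $\Omega(\|x_0\|_X;A;W)\geqslant\|Ax_0\|_Y$ is immediate since $x_0$ is admissible, while for any $x\in W$ with $\|x\|_X\leqslant\|x_0\|_X$ the triangle-inequality estimate $\|Ax\|_Y\leqslant U(A,S_0;W)+\|S_0\|\,\|x\|_X \leqslant U(A,S_0;W)+\|S_0\|\,\|x_0\|_X = \|Ax_0\|_Y$ gives the reverse bound, hence equality.

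I do not expect a genuine obstacle here: the whole argument is a triangle-inequality bound followed by a collapsing sandwich. The only points requiring care are the bookkeeping in the equality of the two supremum forms, where the sign $N\geqslant 0$ must be used correctly, and checking that the extremal identity is precisely what turns both chains of inequalities into equalities. It is worth noting that the hypotheses that $A$ is homogeneous and that $W$ is centrally symmetric and convex appear not to be needed for these particular conclusions; they serve to guarantee regularity of the map $\delta\mapsto\Omega(\delta;A;W)$ that is exploited elsewhere, and I would flag this as a consistency check rather than a difficulty.
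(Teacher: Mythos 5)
Your proof is correct. Note that the paper does not actually prove this statement: it is quoted as Stechkin's classical result from \cite[\S2]{Ste_67}, so there is no in-paper argument to compare against; what you wrote is precisely the standard proof from the literature (triangle inequality plus $\|Sx\|_Y\leqslant N\|x\|_X$ for the lower bound, then the collapsing sandwich with $N=\|S_0\|$ for the ``moreover'' part), and every step checks out, including the equality of the two supremum forms, where your use of $N\geqslant 0$ is exactly the point requiring care. Your consistency check is also accurate: homogeneity of $A$ and central symmetry/convexity of $W$ are not needed for these particular conclusions (they guarantee, e.g., $0\in W$ and regularity properties of $\delta\mapsto\Omega(\delta;A;W)$ used elsewhere in the theory). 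One small repair you made tacitly and correctly: the theorem's final clause ``for $N=\|x_0\|_X$'' is a typo in the paper and should read $N=\|S_0\|$, which is the value your argument uses.
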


For a good overview of known results on the Stechkin problem and discussion of related problems we refer the reader to surveys~\cite{AreGab_95,Are_96} and book~\cite{BKKP}.

Let us formulate the results on the solution to problem~\eqref{Stechkin_problem} for functional $f\circ A:H_{{\bf C}, {\bf h'}}\to\mathbb{C}$ on the class $W_{{\bf D},{\bf h''}}$. First, we introduce the set of functionals $G_\mu:H_{{\bf C},{\bf h}'} \to\mathbb{C}$, $\mu\geqslant 0$, that will be extremal in this problem:
\[
	G_\mu x := \widetilde{\sum\limits_{{n}\in M}} \frac{\left<f, Ae_{n}\right> c_{{n}, {\bf h'}}}{c_{{n}, {\bf h'}} + \mu d_{{n}, {\bf h''}}}\cdot x_n,\qquad x\in H_{{\bf C}, {\bf h}'}.
\]
Providing that condition~\eqref{condition_0} is fulfilled, it is evident that, for every $\mu > 0$,
\[
	\left\|G_\mu\right\| = \|G_\mu\|_{H_{{\bf C},{\bf h}'}\to \mathbb{C}} = \widetilde{\sum\limits_{{n}\in M}} \frac{\left|\left<f, Ae_{n}\right>\right|^2c_{{n},{\bf h}'}}{\left(c_{{n},{\bf h}'} + \mu d_{{n},{\bf h}''}\right)^2},
\]
and $\left\|G_\mu\right\|$ is non-increasing and continuous on $\RR_+$. By the monotone convergence theorem, 
\[
	\lim\limits_{\mu\to+\infty}\left\|G_\mu\right\| = \lim\limits_{\mu\to+\infty} \widetilde{\sum\limits_{{n}\in M}} \frac{\left|\left<f, Ae_{n}\right>\right|^2c_{{n},{\bf h}'}}{\left(c_{{n},{\bf h}'} + \mu d_{{n},{\bf h}''}\right)^2} = \sum\limits_{n\in M^*} \frac{\left|\left<f,Ae_n\right>\right|^2}{c_{n,{\bf h}'}} =: N^*, 
\]
where $M^* := \left\{n\in M\,:\,c_{n,{\bf h}'}\ne 0\text{ and }d_{n,{\bf h}''}=0\right\}$, and
\[
	\lim\limits_{\mu\to 0^+} \left\|G_\mu\right\| = \widetilde{\sum\limits_{{n}\in M}} \frac{\left|\left<f,Ae_{n}\right>\right|^2}{c_{{n},{\bf h}'}}.
\]
Hence, either $\lim\limits_{\mu\to 0^+}\left\|G_\mu\right\| = \left\|G_0\right\|$ when $G_0$ is bounded, or $\lim\limits_{\mu\to 0^+}\left\|G_\mu\right\| = +\infty$ when $G_0$ is unbounded. 



\begin{theorem}
\label{Stechkin_theorem}
	Let $p,q\in\ZZ_+$, both operators $C_0,\ldots,C_{p}$ and $D_0,\ldots,D_{q}$ satisfy conditions~{\rm (B1)}--{\rm (B4)}, operator $A$ and functional $f\in X^\ast$ satisfy conditions~{\rm (A1)}, {\rm (Af1)}, {\rm (Af2)}. Also, let inequality~\eqref{condition_0} be fulfilled, {\it i.e.}
	\[
	    \widetilde{\sum\limits_{n\in M}}\frac{\left|\left<f,Ae_n\right>\right|^2}{b_{n,{\bf 1}}} < \infty
	\]
	and either $N \in \left(N^*,+\infty\right)$ if $f\circ A : H_{{\bf C},{\bf h}'}\to\mathbb{C}$ is unbounded or $N\in \left(N^*,\left\|f\circ A\right\|\right]$ if $f\circ A:H_{{\bf C},{\bf h}'}\to\mathbb{C}$ is bounded, and $\mu \geqslant 0$ be such that $N = \left\|G_\mu\right\|$. Then
	\begin{equation}
	\label{Stechkin_result}
		E_N\left(f\circ A; W_{{\bf D}, {\bf h''}}\right) = \mu\cdot \left(\widetilde{\sum\limits_{{n}\in M}} \frac{\left|\left<f, Ae_{n}\right>\right|^2 d_{{n}, {\bf h''}}}{\left(c_{{n}, {\bf h'}} + \mu d_{{n}, {\bf h''}}\right)^2}\right)^{\frac 12},
	\end{equation}
	and the functional $G_\mu$ is extremal in problem~\eqref{Stechkin_problem}. 
\end{theorem}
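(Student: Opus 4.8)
The plan is to prove~\eqref{Stechkin_result} by a two-sided estimate, sandwiching the common value between an upper bound for the approximation error of the concrete linear functional $G_\mu$ and the general lower bound supplied by Stechkin's lemma (Theorem~\ref{Stechkin_lemma}). Before that I would note that the hypotheses on $N$ single out a parameter $\mu\geqslant 0$ with $N=\|G_\mu\|$: since $\|G_\mu\|$ is continuous and non-increasing in $\mu$ with $\lim_{\mu\to+\infty}\|G_\mu\|=N^*$ and $\lim_{\mu\to 0^+}\|G_\mu\|$ equal to $\|f\circ A\|$ in the bounded case and to $+\infty$ in the unbounded case, every admissible $N\in(N^*,\|f\circ A\|]$, respectively $N\in(N^*,+\infty)$, is attained. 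Throughout, condition~\eqref{condition_0} forces $\langle f,Ae_n\rangle=0$ whenever $c_{n,{\bf h'}}+d_{n,{\bf h''}}=0$, so every tilde-sum below converges absolutely and only indices with $c_{n,{\bf h'}}+\mu d_{n,{\bf h''}}\ne 0$ contribute.

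For the upper bound I would estimate $U(f\circ A,G_\mu;W_{{\bf D},{\bf h''}})$ directly. For $x\in W_{{\bf D},{\bf h''}}$ one has
\[
\langle f,Ax\rangle-G_\mu x=\widetilde{\sum\limits_{n\in M}}\langle f,Ae_n\rangle\Bigl(1-\frac{c_{n,{\bf h'}}}{c_{n,{\bf h'}}+\mu d_{n,{\bf h''}}}\Bigr)x_n=\widetilde{\sum\limits_{n\in M}}\frac{\mu\,d_{n,{\bf h''}}\,\langle f,Ae_n\rangle}{c_{n,{\bf h'}}+\mu d_{n,{\bf h''}}}\,x_n,
\]
and the Schwarz inequality with the weights $d_{n,{\bf h''}}$ defining $\|\cdot\|_{{\bf D},{\bf h''}}$ gives
\[
\bigl|\langle f,Ax\rangle-G_\mu x\bigr|\leqslant\mu\Bigl(\widetilde{\sum\limits_{n\in M}}\frac{|\langle f,Ae_n\rangle|^2 d_{n,{\bf h''}}}{(c_{n,{\bf h'}}+\mu d_{n,{\bf h''}})^2}\Bigr)^{1/2}\|x\|_{{\bf D},{\bf h''}}.
\]
As $\|x\|_{{\bf D},{\bf h''}}\leqslant 1$ on $W_{{\bf D},{\bf h''}}$ and $G_\mu\in\mathcal{L}(N)$ (because $\|G_\mu\|=N$), this bounds $U(f\circ A,G_\mu;W_{{\bf D},{\bf h''}})$, and hence $E_N(f\circ A;W_{{\bf D},{\bf h''}})$, by the right-hand side of~\eqref{Stechkin_result}.

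For the reverse inequality I would apply the first part of Theorem~\ref{Stechkin_lemma}, namely
\[
E_N(f\circ A;W_{{\bf D},{\bf h''}})\geqslant\sup_{x\in W_{{\bf D},{\bf h''}}}\bigl(|\langle f,Ax\rangle|-N\|x\|_{{\bf C},{\bf h'}}\bigr),
\]
and test it on the extremal elements~\eqref{extremal_Taikov_element} built with the combined weight $b_{n,{\bf h}}=c_{n,{\bf h'}}+\mu d_{n,{\bf h''}}$, that is, on $x_K:=\widetilde{\sum\limits_{n\in M_K}}\frac{\overline{\langle f,Ae_n\rangle}}{c_{n,{\bf h'}}+\mu d_{n,{\bf h''}}}\,e_n$. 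By~(B3) and~(Af1) the quantity $R_K:=\|x_K\|_{{\bf D},{\bf h''}}^2$ is positive for all large $K$, so $\hat x_K:=x_K/\sqrt{R_K}$ lies in $W_{{\bf D},{\bf h''}}$. A direct computation yields $\langle f,Ax_K\rangle=Q_K+\mu R_K$ and $\|x_K\|_{{\bf C},{\bf h'}}^2=Q_K$, where $Q_K:=\widetilde{\sum\limits_{n\in M_K}}\frac{|\langle f,Ae_n\rangle|^2 c_{n,{\bf h'}}}{(c_{n,{\bf h'}}+\mu d_{n,{\bf h''}})^2}$, so that
\[
|\langle f,A\hat x_K\rangle|-N\|\hat x_K\|_{{\bf C},{\bf h'}}=\frac{Q_K+\mu R_K-N\sqrt{Q_K}}{\sqrt{R_K}}.
\]
Letting $K\to\infty$ and using that $Q_K\to N^2$ (the limit of $Q_K$ is the squared operator norm of $G_\mu$, equal to $N^2$) and $R_K\to\bar R$, the right-hand side tends to $\mu\bar R/\sqrt{\bar R}=\mu\sqrt{\bar R}$, which is precisely the right-hand side of~\eqref{Stechkin_result}. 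Combining with the upper bound gives equality in~\eqref{Stechkin_result}; since this value equals $U(f\circ A,G_\mu;W_{{\bf D},{\bf h''}})$, the functional $G_\mu$ is extremal.

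The crux is the exact cancellation $Q_K+\mu R_K-N\sqrt{Q_K}\to N^2+\mu\bar R-N\cdot N=\mu\bar R$ in the limit, which is what forces the lower and upper bounds to coincide; it hinges on the algebraic identity $\langle f,Ax_K\rangle=Q_K+\mu R_K$ together with the correct value $\lim_K Q_K=N^2$ of the squared norm of $G_\mu$. The remaining difficulties are purely technical: one must handle the tilde-sum convention on the exceptional index sets where $c_{n,{\bf h'}}$ or $d_{n,{\bf h''}}$ vanishes, justify the passage to the limit (guaranteed by the absolute convergence furnished by~\eqref{condition_0}), and confirm via~(B3) and~(Af1) that $\hat x_K$ is eventually well defined and remains in $W_{{\bf D},{\bf h''}}$.
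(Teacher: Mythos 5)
Your proposal is correct and takes essentially the same route as the paper: the upper bound is exactly the paper's estimate~\eqref{difference_estimate} applied to $G_\mu$ on $W_{{\bf D},{\bf h''}}$, the lower bound is Theorem~\ref{Stechkin_lemma} tested on the same elements $x^\mu_{{\bf h},L}$, and your explicit cancellation $Q_K+\mu R_K-N\sqrt{Q_K}\to \mu\bar R$ merely spells out the step the paper leaves implicit (tacitly correcting the missing square root in the paper's displayed formula for $\|G_\mu\|$, since the limit of $Q_K$ is $\|G_\mu\|^2=N^2$, not $N$). One nitpick: conditions (B3) and (Af1) do not in fact guarantee $R_K>0$ for large $K$, but this is harmless, since if $R_K$ vanishes for all $K$ then the right-hand side of~\eqref{Stechkin_result} is zero and the upper bound alone gives equality --- the same degenerate case the paper dispatches by skipping the $L$'s with $\left\|x^\mu_{{\bf h},L}\right\|_{{\bf D},{\bf h}''}=0$.
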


Note that in~\cite{Bab_Bil_12} the Stechkin problem was solved in a similar setting, when an unbounded functional, represented as a composition of a functional and a power of self-adjoint operator in Hilbert space, is approximated by bounded functionals on the set defined with the help of higher order power of this self-adjoint operator.

\begin{rem}
    Theorem~\ref{Stechkin_theorem} allows solving closely related problems of finding the modulus of continuity of $f\circ A$ on the set $W_{{\bf D},{\bf h}''}$, and the problem of the best recovery of $f\circ A$ on the set $W_{{\bf D}, {\bf h}''}$, whose elements are given with an error (see, {\it e.g.},~\cite{Are_96} or~\cite[\S7]{BKKP}). 
\end{rem}


\begin{proof}[The proof of Theorems~\ref{Taikov_type} and~\ref{Stechkin_theorem}]
	By the Schwarz inequality, for $x\in H_{{\bf C},{\bf h}'}$:
	\[
		\begin{array}{rcl}
			\displaystyle \left|G_{\mu} x\right| & \leqslant &\displaystyle  \|G_\mu\|\cdot \left(\widetilde{\sum\limits_{{n}\in M}} c_{{n},{\bf h'}}\left|x_n\right|^2\right)^{\frac 12} = \displaystyle \left(\widetilde{\sum\limits_{n\in M}}\frac{\left|\left<f,Ae_n\right>\right|^2}{c_{n,{\bf h}'}}\right)^{\frac 12} \left\|x\right\|_{{\bf C}, {\bf h'}}.
		\end{array}
	\]
	For $x\in H_{\bf B}\cap \mathcal{D}(A)$, we consider partial sums $\{S_Nx\}_{N=1}^\infty$. Evidently, $S_Nx\to x$ in $H$ and in $H_{{\bf C},{\bf h}'}$ as $N\to\infty$. Using the Schwarz inequality, closability of functional $g_f:H\to\mathbb{C}$ and boundedness of functional $G_\mu$, we obtain
	\begin{equation}
	\label{difference_estimate}
		\left|\left<f, Ax\right> - G_{\mu} x\right| \leqslant \left(\widetilde{\sum\limits_{{n}\in M}} \frac{\left|\left<f, Ae_{n}\right>\right|^2\cdot\mu^2 d_{{n}, {\bf h''}}}{\left(c_{{n}, {\bf h'}} + \mu d_{{n}, {\bf h''}}\right)^2}\right)^{\frac 12} \|x\|_{{\bf D}, {\bf h''}}.
	\end{equation}
	Combining above estimates in the case $\mu = 1$ and applying the triangle inequality, we establish the desired inequality~(\ref{Taikov_type_inequality}). It is not difficult to verify that the sequence $\left\{x_{{\bf h},N}\right\}_{N=1}^\infty$ defined by~\eqref{extremal_Taikov_element} is extremal in  inequality~\eqref{Taikov_type_inequality}, which finishes the proof of Theorem~\ref{Taikov_type}. 
	
	It remains to prove Theorem~\ref{Stechkin_theorem}. Let $N = \left\|G_\mu\right\|$ with some $\mu\geqslant 0$. By~\eqref{difference_estimate},
	\[
		E_N\left(f\circ A; W_{{\bf D}, {\bf h''}}\right)\leqslant \sup\limits_{x\in W_{{\bf D},{\bf h}''}} \left|\left<f,Ax\right> - G_\mu x\right|\leqslant \mu\cdot \left(\widetilde{\sum\limits_{{n}\in M}} \frac{\left|\left<f, Ae_{n}\right>\right|^2 d_{{\bf n}, {\bf h''}}}{\left(c_{{n}, {\bf h'}} + \mu d_{{n}, {\bf h''}}\right)^2}\right)^{\frac 12}.
	\]
	Let us show that the later inequality is sharp. To this end, for $L\in\NN$, we consider the element 
	\[
		x^\mu_{{\bf h},L} := \widetilde{\sum\limits_{{n} \in M_L}} \frac{\overline{\left<f, Ae_{n}\right>}}{c_{{n}, {\bf h'}} + \mu d_{{n}, {\bf h''}}}\cdot e_{n}.
	\]
	Evidently,
	\[
		\left<f, Ax^\mu_{{\bf h},L}\right> = \widetilde{\sum\limits_{{n}\in M_L}} \frac{\left|\left<f, Ae_{n}\right>\right|^2}{c_{{n}, {\bf h'}} + \mu d_{{n}, {\bf h''}}},
	\]
	\[
		\left\|x^\mu_{{\bf h},L}\right\|_{{\bf C}, {\bf h'}}^2 \!=\! \widetilde{\sum\limits_{{n}\in M_L}} \frac{\left|\left<f, Ae_{n}\right>\right|^2 c_{{n}, {\bf h'}}}{\left(c_{{n}, {\bf h'}} + \mu d_{{n}, {\bf h''}}\right)^2} \;\;{\rm and}\;\; \left\|x^\mu_{{\bf h},L}\right\|_{{\bf D}, {\bf h''}}^2 \!=\! \widetilde{\sum\limits_{{n}\in M_L}} \frac{\left|\left<f, Ae_{n}\right>\right|^2 d_{{n}, {\bf h''}}}{\left(c_{{n}, {\bf h'}} + \mu d_{{n}, {\bf h''}}\right)^2}.
	\]
	Using the lower estimate~\eqref{Stechkin_lower_estimate}, we obtain
	\[
		\begin{array}{rcl}
			\displaystyle E_N\left(f\circ A; W_{{\bf D}, {\bf h''}}\right) & \geqslant & \displaystyle \lim\limits_{L\to\infty}\frac{\left|\left<f, Ax^\mu_{{\bf h},L}\right>\right| - N\cdot\left\|x^\mu_{{\bf h},L}\right\|_{{\bf C}, {\bf h'}}}{\left\|x^\mu_{{\bf h},L}\right\|_{{\bf D}, {\bf h''}}} \\ 
			& = & \displaystyle \lim\limits_{L\to\infty}\left(\frac{1}{\left\|x^\mu_{{\bf h},L}\right\|_{{\bf D}, {\bf h''}}} \widetilde{\sum\limits_{{n}\in M_L}} \frac{\left|\left<f, Ae_{n}\right>\right|^2 \cdot\mu d_{{n}, {\bf h''}}}{\left(c_{{n}, {\bf h'}} + \mu d_{{n}, {\bf h''}}\right)^2}\right) \\
			& = & \displaystyle \mu \left(\widetilde{\sum\limits_{{n}\in M}} \frac{\left|\left<f, Ae_{n}\right>\right|^2 d_{{n}, {\bf h''}}}{\left(c_{{n}, {\bf h'}} + \mu d_{{n}, {\bf h''}}\right)^2}\right)^\frac{1}{2},
		\end{array}
	\]
	which finishes the proof. Remark that in the above relations we skipped those $L$'s for which $\left\|x^\mu_{{\bf h},L}\right\|_{{\bf D},{\bf h}''} = 0$.
	
	Let us show that $\|G_\mu\|$ attains all values in $(N^*,\|f\circ A\|]$ when $f\circ A$ is bounded, and all values in $(N^*,+\infty)$ when $f\circ A$ is unbounded. Indeed, assume first that $f\circ A:H_{{\bf C},{\bf h}'}\to\mathbb{C}$ is bounded functional. Clearly, $\left<f,Ax\right> = G_0x$ on dense in $H$ set $\mathcal{F} := {\rm span}\left\{e_n\,:\,n\in M\right\}$. Hence, $\|f\circ A\| \geqslant \left\|G_0\right\|$. On the other hand, by Theorem~\ref{Taikov_type_generic},
	\[
		\|f\circ A\| \leqslant \widetilde{\sum\limits_{n\in M}}\frac{\left|\left<f, Ae_n\right>\right|^2}{c_{n,{\bf h'}}} = \|G_0\|.
	\] 
	Hence, $\|f\circ A\| = \left\|G_0\right\|$ and $\left\|G_\mu\right\|$ attains all values in $\left(N^*,\left\|f\circ A\right\|\right]$ when $\mu\in[0,+\infty)$.
	
	Finally, we assume that $f\circ A:H_{{\bf B}',{\bf h}'}\to\mathbb{C}$ is unbounded. According to Theorem~\ref{Taikov_type_generic} this implies that the series 
	\[
		\widetilde{\sum\limits_{n\in M}}\frac{\left|\left<f, Ae_n\right>\right|^2}{c_{n,{\bf h'}}}
	\]
	is divergent. Hence, $\lim\limits_{\mu\to 0^+}\left\|G_\mu\right\| = +\infty$ and the norm $\|G_\mu\|$ decreases and attains all values in $\left(N^*,+\infty\right)$ as $\mu\in(0,+\infty)$.
\end{proof}

\begin{rem}
	Using the arguments similar to those applied to prove Theorem~\ref{Stechkin_theorem}, we can show that $E_N\left(f\circ A; W_{{\bf D},{\bf h}''}\right) = +\infty$ when $N \in \left[0, N^*\right)$, and 
	\[
		E_{N^*}\left(f\circ A; W_{{\bf D},{\bf h}''}\right) = \widetilde{\sum\limits_{n\in M}} \frac{\left|\left<f,Ae_n\right>\right|^2}{d_{n,{\bf h}''}},
	\]
	with the functional $G_{+\infty}x := \sum\limits_{n\in M^*}\left<f,Ae_n\right>\cdot x_n$, $x\in H_{\bf C, h'}$, being extremal in Problem~\ref{Stechkin_problem} providing that the right hand part of the above equality if finite.
\end{rem}

\subsection{Hardy-Littlewood-P\'olya type inequality}

Let $X = H'$ so that $A$ acts from $H$ to $H'$. Taking $\sup$ in~\eqref{mean_squared_Taikov_type_inequality_generic} over all functionals $f\in X^\ast \cong H'$ such that $\left\|f\right\|_{H'} \leqslant 1$ we obtain the following result. 

\begin{theorem}
\label{HLP}
	Let $m\in\mathbb{Z}_+$, operators $B_0,B_1,\ldots,B_m$ satisfy conditions~{\rm (B1)}--{\rm (B4)}, operator $A$ satisfy conditions~{\rm (A1)}--{\rm (A3)}, and
	\begin{equation}
	\label{condition_2}
		\sup\limits_{f\in H':\atop \|f\|_{H'}\leqslant 1} \widetilde{\sum\limits_{{n}\in M}} \frac{\left|\left(f,Ae_{n}\right)_{H'}\right|^2}{b_{{n}, {\bf 1}}} < \infty.
	\end{equation}
	Then, for ${\bf h} \in \mathbb{R}_+^{m+1}$ and $x\in \mathcal{D}(A)\cap H_{\bf B}$, there holds true sharp inequality
	\begin{equation}
	\label{HLP_type_inequality}
		\left\|Ax\right\|_{H'} \leqslant \left(\sup\limits_{f\in H':\atop\|f\|_{H'}\leqslant 1} \widetilde{\sum\limits_{{n}\in M}} \frac{\left|\left(f, Ae_{n}\right)_{H'}\right|^2}{b_{{n},{\bf h}}}\right)^{\frac{1}{2}}\left\|x\right\|_{{\bf B},{\bf h}}.
	\end{equation}
\end{theorem}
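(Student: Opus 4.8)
The plan is to deduce the Hardy--Littlewood--P\'olya inequality~\eqref{HLP_type_inequality} from the already-established mean-squared Taikov inequality~\eqref{mean_squared_Taikov_type_inequality_generic} by combining the Riesz identification $X^\ast\cong H'$ with the dual description of the norm
\[
	\|Ax\|_{H'} = \sup_{f\in H':\,\|f\|_{H'}\leqslant 1} \left|\left(f,Ax\right)_{H'}\right|.
\]
First I would record the consequences of~\eqref{condition_2}: for \emph{every} admissible $f$ (those with $\|f\|_{H'}\leqslant 1$) the quantity $\widetilde{\sum\limits_{n\in M}}|(f,Ae_n)_{H'}|^2/b_{n,{\bf 1}}$ is one of the terms under the supremum, hence finite, so condition~\eqref{condition_0} holds for each such $f$; moreover this forces $(f,Ae_n)_{H'}=0$ at every index with $b_{n,{\bf h}}=0$ (note $b_{n,{\bf h}}=0\iff b_{n,{\bf 1}}=0$ since all $h_j>0$), which legitimizes the $\widetilde{\sum}$ convention below.

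Rather than invoke Theorem~\ref{Taikov_type_generic} verbatim for each $f$ (which would require checking (Af1) and (Af2) for every individual functional), I would argue directly at the level of the partial sums $S_Nx$ for a fixed $x\in\mathcal{D}(A)\cap H_{\bf B}$. By (A1) each $S_Nx$ is a finite combination of the $e_n$, so by linearity $A(S_Nx)=\sum_{n\in M_N}x_n\,Ae_n$. For every $f$ with $\|f\|_{H'}\leqslant 1$ the Schwarz inequality together with the $\widetilde{\sum}$ convention gives
\[
	\left|\left(f,A(S_Nx)\right)_{H'}\right| = \left|\sum_{n\in M_N}(f,Ae_n)_{H'}\,x_n\right| \leqslant \left(\widetilde{\sum\limits_{n\in M}}\frac{\left|(f,Ae_n)_{H'}\right|^2}{b_{n,{\bf h}}}\right)^{\frac12}\|x\|_{{\bf B},{\bf h}},
\]
and taking the supremum over such $f$ on both sides yields $\|A(S_Nx)\|_{H'}\leqslant K\|x\|_{{\bf B},{\bf h}}$, where $K$ denotes the constant appearing in~\eqref{HLP_type_inequality}.

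The crux is the passage to the limit $N\to\infty$. Running the same estimate over the annulus $M_{N_2}\setminus M_{N_1}$ and using $\sum_{n\in M}b_{n,{\bf h}}|x_n|^2=\|x\|_{{\bf B},{\bf h}}^2<\infty$ (because $x\in H_{\bf B}$), I would conclude that $\{A(S_Nx)\}_N$ is fundamental and hence convergent in the complete space $H'$. Since $S_Nx\to x$ in $H$ and $A$ is closable by (A3), the closure $\overline{A}$ extends $A$ and has closed graph, so the limit of $A(S_Nx)$ must coincide with $\overline{A}x=Ax$; consequently $\|A(S_Nx)\|_{H'}\to\|Ax\|_{H'}$ and~\eqref{HLP_type_inequality} follows on letting $N\to\infty$. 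I expect this closability step --- ensuring that the limit of $A(S_Nx)$ is genuinely $Ax$ and not a spurious element --- to be the main obstacle, since it is precisely where hypothesis (A3) is indispensable.

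For sharpness I would transfer the sharpness already proved in Theorem~\ref{Taikov_type_generic}. Given $\varepsilon>0$, choose $f_\varepsilon$ with $\|f_\varepsilon\|_{H'}\leqslant 1$ and $\widetilde{\sum\limits_{n\in M}}|(f_\varepsilon,Ae_n)_{H'}|^2/b_{n,{\bf h}}>K^2-\varepsilon$, and test against the corresponding extremal elements $x_{{\bf h},N}$ from~\eqref{extremal_Taikov_element}. Because $\|Ax_{{\bf h},N}\|_{H'}\geqslant|(f_\varepsilon,Ax_{{\bf h},N})_{H'}|$, the sharpness relation of Theorem~\ref{Taikov_type_generic} gives
\[
	\frac{\|Ax_{{\bf h},N}\|_{H'}^2}{\|x_{{\bf h},N}\|_{{\bf B},{\bf h}}^2}\geqslant\frac{\left|(f_\varepsilon,Ax_{{\bf h},N})_{H'}\right|^2}{\|x_{{\bf h},N}\|_{{\bf B},{\bf h}}^2}\longrightarrow\widetilde{\sum\limits_{n\in M}}\frac{\left|(f_\varepsilon,Ae_n)_{H'}\right|^2}{b_{n,{\bf h}}}>K^2-\varepsilon,
\]
so the attainable ratio exceeds $K^2-\varepsilon$ for every $\varepsilon>0$; letting $\varepsilon\to 0$ shows the constant $K$ cannot be lowered, which establishes sharpness.
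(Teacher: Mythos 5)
Your proposal is correct, but the main inequality is proved by a genuinely different mechanism than in the paper. The paper fixes $x$ and varies the functional: it applies Theorem~\ref{Taikov_type_generic} only to $g\in\mathcal{D}(A^\ast)$ (for which $x\mapsto (g,Ax)_{H'}$ is continuous, so (Af2) is automatic), then invokes density of $\mathcal{D}(A^\ast)$ in $H'$ --- which is where closability (A3) enters there --- and extends the resulting bound from $U_{H'}\cap\mathcal{D}(A^\ast)$ to the whole unit ball by continuity of $g\mapsto(g,Ax)_{H'}$, before taking the supremum over $g$. You instead keep the bound uniform over all $f$ in the unit ball at the level of the finite sections $S_Nx$ --- which, as you anticipated, sidesteps having to verify (Af1)/(Af2) per functional --- and then approximate $x$: your annulus estimate (with the first Schwarz factor bounded by the constant $K$ uniformly in $f$, and terms with $b_{n,{\bf h}}=0$ vanishing since \eqref{condition_2} forces $Ae_n=0$ there) shows $\{A(S_Nx)\}$ is Cauchy, and (A3) identifies the limit as $\overline{A}x=Ax$ because $x\in\mathcal{D}(A)$ and $\overline{A}$ extends $A$. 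So both proofs spend hypothesis (A3), just in different places; your route is more self-contained (no appeal to the Dunford--Schwartz density fact, reusing instead the graph-limit argument already present in Lemma~\ref{Bessel_inequality} and Theorem~\ref{Taikov_type_generic}) and it immediately yields the paper's remark that for closed $A$ the inequality holds on all of $H_{\bf B}$, while the paper's dualization is the more economical way to recycle Theorem~\ref{Taikov_type_generic} verbatim. Your sharpness argument coincides with the paper's (near-extremal $f_\varepsilon$, test elements built from $f_\varepsilon$ via \eqref{extremal_Taikov_element}); note only that quoting the ``sharpness relation'' of Theorem~\ref{Taikov_type_generic} is legitimate here because the identities $\left(f_\varepsilon,Ax_{{\bf h},N}\right)_{H'}=\left\|x_{{\bf h},N}\right\|_{{\bf B},{\bf h}}^2=\widetilde{\sum\limits_{n\in M_N}}\frac{\left|\left(f_\varepsilon,Ae_n\right)_{H'}\right|^2}{b_{n,{\bf h}}}$ are finite-sum computations needing no (Af1)/(Af2) for $f_\varepsilon$, and that the denominator is nonzero for large $N$ since (A2) and (B3) give $K>0$, so for small $\varepsilon$ the sum exceeds $K^2-\varepsilon>0$.
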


\begin{rem}
	If $b_{{n},{\bf 1}} = 0$ for some ${n}\in M$ then condition~\eqref{condition_2} implies $Ae_{n} = 0$.
\end{rem}

\begin{rem}
	If $A$ is closed then inequality~\eqref{HLP_type_inequality} holds true for every $x\in H_{\bf B}$.
\end{rem}

\begin{rem}
	 Theorem~\ref{HLP} generalizes Hardy-Littlewood-P\'olya type inequality established in~\cite{BabLigShu_06} (see the last inequality in~\cite{BabLigShu_06}), which can be obtained by setting $H' := H$, $h_0 := 1$ and $B_0 := {\rm id}_{H}$. 
\end{rem}

\begin{proof}
	Since condition~(Af2) holds true for every $g\in\mathcal{D}\left(A^\ast\right)$, by Theorem~\ref{Taikov_type} and condition~\eqref{condition_2}, we have that, for every $g\in\mathcal{D}\left(A^\ast\right)$, $\|g\|_{H'}\leqslant 1$, and $x\in\mathcal{D}(A)\cap H_{\bf B}$,
	\begin{equation}
	\label{intermediate_ineq}
		\left|\left(g,Ax\right)_{H'}\right| \leqslant \left(\sup\limits_{f\in H':\atop\|f\|_{H'}\leqslant 1} \widetilde{\sum\limits_{{n}\in M}} \frac{\left|\left(f, Ae_{n}\right)_{H'}\right|^2}{b_{{n},{\bf h}}}\right)^{\frac{1}{2}}\left\|x\right\|_{{\bf B},{\bf h}}.
	\end{equation}
	It is well-known~\cite[Chap.~XII]{DanSwa_63} that $\mathcal{D}\left(A^\ast\right)$ is dense in $H'$. Hence, inequality~\eqref{intermediate_ineq} holds true for a set of functionals dense in the unit ball $U_{H'} = \left\{f\in H'\,:\,\|f\|_{H'}\leqslant 1\right\}$ in ${H'}$. Let us consider any $g\in U_{H'}$. There exists a sequence $U_{H'}\cap\mathcal{D}\left(A^\ast\right)\supset\left\{g_N\right\}_{N=1}^\infty\to g$. Since $Ax$ defines a continuous functional on $H'$, we conclude that~\eqref{intermediate_ineq} holds true for every $g\in U_{H'}$. Taking the sup in the left hand part of~\eqref{intermediate_ineq} over $g\in U_{H'}$, we obtain the desired inequality~\eqref{HLP_type_inequality}.
	
	Let us demonstrate that inequality~\eqref{HLP_type_inequality} is sharp. To this end we let $\left\{f_\varepsilon\right\}_{\varepsilon > 0}\subset U_{H'}$ be a set of elements such that, for $\varepsilon > 0$,
	\[
		\widetilde{\sum\limits_{{n}\in M}} \frac{\left|\left(f_\varepsilon,Ae_{n}\right)_{H'}\right|^2}{b_{{n},{\bf h}}} > \sup\limits_{f\in H'\,:\atop\|f\|_{H'}\leqslant 1}\widetilde{\sum\limits_{{n}\in M}} \frac{\left|\left(f,Ae_{n}\right)_{H'}\right|^2}{b_{{n},{\bf h}}} - \frac{\varepsilon}2.
	\]
	For $N\in\mathbb{N}$, let us consider the set $\left\{x_{{\bf h},N}^\varepsilon\right\}_{\varepsilon > 0}\subset H$ defined as follows
	\[
		x_{{\bf h},N}^\varepsilon := \widetilde{\sum\limits_{{n}\in M_N}} \frac{\overline{\left(f_\varepsilon,Ae_{n}\right)_{H'}}}{b_{{n},{\bf h}}}\cdot e_{n}.
	\]
	Then
	\[
		\left\|Ax_{{\bf h},N}^\varepsilon\right\|_{H'} \geqslant \left(f_\varepsilon, Ax^\varepsilon_{{\bf h},N}\right)_{H'} = \widetilde{\sum\limits_{{n}\in M_N}} \frac{\left|\left(f_\varepsilon,Ae_{n}\right)_{H'}\right|^2}{b_{{n},{\bf h}}},
	\]
	\[
		\left\|x_{{\bf h},N}^\varepsilon\right\|_{{\bf B},{\bf h}}^2 = \sum\limits_{j=0}^m h_j\left\|B_jx^\varepsilon_{{\bf h},N}\right\|_{H'}^2 = \widetilde{\sum\limits_{{n}\in M_N}} \frac{\left|\left(f_\varepsilon,Ae_{n}\right)_{H'}\right|^2}{b_{{n},{\bf h}}}.
	\]
	Hence, by conditions~(B3) and~(A2), for every sufficiently large $N$,
	\[
		\begin{array}{rcl}
			\displaystyle \frac{\left\|Ax_{{\bf h},N}^\varepsilon\right\|_{H'}^2}{\left\|x_{{\bf h},N}^\varepsilon\right\|_{{\bf B},{\bf h}}^2 
			} & \geqslant & \displaystyle \widetilde{\sum\limits_{{n}\in M_N}} \frac{\left|\left(f_\varepsilon,Ae_{n}\right)_{H'}\right|^2}{b_{{n},{\bf h}}} \geqslant \widetilde{\sum\limits_{{n}\in M}} \frac{\left|\left(f_\varepsilon,Ae_{n}\right)_{H'}\right|^2}{b_{{n},{\bf h}}} - \frac{\varepsilon}{2} \\
			& \geqslant & \displaystyle \sup\limits_{f\in H'\,:\atop\|f\|_{H'}\leqslant 1}\widetilde{\sum\limits_{{n}\in M}} \frac{\left|\left(f,Ae_{n}\right)_{H'}\right|^2}{b_{{n},{\bf h}}} - \varepsilon.
		\end{array}
	\]
	Letting $\varepsilon \to 0^+$, we see that inequality~\eqref{HLP_type_inequality} is sharp.
\end{proof}


\section{Multiplicative Hardy-Littlewood-P\'olya and Taikov type inequalities}
\label{Sec:Multiplicative}

In this section we obtain multiplicative Taikov and Hardy-Littlewood-P\'olya inequalities as consequences from their mean-squared analogues given by Theorems~\ref{Taikov_type_generic} and~\ref{HLP}. 

\subsection{Multiplicative Taikov type inequality}

Following the ideas in~\cite{Sha_90} we can establish the following result. 

\begin{theorem}
	\label{Taikov_for_multipliers_abst}
	Let $m\in\mathbb{Z}_+$, operators $B_0,\ldots,B_m$ satisfy conditions {\rm (B1)}--{\rm (B4)}, operator $A$ and functional $f\in X^\ast$ satisfy conditions~{\rm (A1)},~{\rm (Af1)},~{\rm (Af2)}. Also, let $\lambda_0,\ldots,\lambda_m > 0$ be such that $\lambda_0+\ldots+\lambda_m = 1$ and
	\begin{equation}
	\label{condition_constant_new_abst}
		\mathcal{C}(f,A,{\bf B},\overline{\lambda}) = \mathcal{C} := \sup\limits_{{\bf h}\in\RR_+^{m+1}}  \prod\limits_{j=0}^m h_j^{\lambda_j}\widetilde{\sum\limits_{n\in M}} \frac{\left|\left<f, Ae_{n}\right>\right|^2}{b_{n,\bf h}
		} < \infty.
	\end{equation}
	Then, for every $x\in \mathcal{D}(A)\cap H_{\bf B}$, there holds true sharp inequality
	\begin{equation}
	\label{Taikov_inequality_for_multipliers_abst}
		\displaystyle \left|\left<f, Ax\right>\right| \leqslant \sqrt{\mathcal{C}\cdot\prod\limits_{j=0}^m\lambda_j^{-\lambda_j}}\cdot\prod\limits_{j=0}^m \left\|B_jx\right\|_{H'}^{\lambda_j}.
	\end{equation}
\end{theorem}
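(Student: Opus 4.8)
The plan is to deduce the multiplicative inequality~\eqref{Taikov_inequality_for_multipliers_abst} from the mean-squared inequality~\eqref{mean_squared_Taikov_type_inequality_generic} of Theorem~\ref{Taikov_type_generic} by optimizing over the weight vector ${\bf h}$. First I observe that hypothesis~\eqref{condition_constant_new_abst} is stronger than~\eqref{condition_0}: choosing ${\bf h} = {\bf 1}$ in the supremum gives $\widetilde{\sum\limits_{n\in M}}\frac{\left|\left<f,Ae_n\right>\right|^2}{b_{n,{\bf 1}}}\leqslant\mathcal{C}<\infty$, and in fact for every ${\bf h}\in\RR_+^{m+1}$ one has $\widetilde{\sum\limits_{n\in M}}\frac{\left|\left<f,Ae_n\right>\right|^2}{b_{n,{\bf h}}}\leqslant\mathcal{C}\prod_{j=0}^m h_j^{-\lambda_j}<\infty$. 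Hence Theorem~\ref{Taikov_type_generic} is applicable for each fixed ${\bf h}$, and combining its conclusion with~\eqref{prenorm_representation} gives, for every $x\in\mathcal{D}(A)\cap H_{\bf B}$,
\[
	\left|\left<f, Ax\right>\right|^2 \leqslant \mathcal{C}\prod_{j=0}^m h_j^{-\lambda_j}\cdot\sum_{j=0}^m h_j\left\|B_jx\right\|_{H'}^2,\qquad {\bf h}\in\RR_+^{m+1}.
\]

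For the upper bound I would minimize the right-hand side over ${\bf h}$. Assuming first that $\left\|B_jx\right\|_{H'}\ne 0$ for all $j$, the natural choice is $h_j := \lambda_j\left\|B_jx\right\|_{H'}^{-2}$, which makes $\sum_{j}h_j\left\|B_jx\right\|_{H'}^2 = \sum_j\lambda_j = 1$ and $\prod_j h_j^{-\lambda_j} = \prod_j\lambda_j^{-\lambda_j}\left\|B_jx\right\|_{H'}^{2\lambda_j}$; substituting and taking square roots yields exactly~\eqref{Taikov_inequality_for_multipliers_abst}. Equivalently, this step is the weighted arithmetic--geometric mean inequality applied with weights $\lambda_j$. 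The degenerate case $\left\|B_{j_0}x\right\|_{H'} = 0$ for some $j_0$ is handled by letting $h_{j_0}\to+\infty$ with the remaining weights fixed: since $\lambda_{j_0}>0$ the factor $\prod_j h_j^{-\lambda_j}\to 0$ while $\sum_j h_j\left\|B_jx\right\|_{H'}^2$ stays bounded, forcing $\left<f,Ax\right> = 0$, so that~\eqref{Taikov_inequality_for_multipliers_abst} holds trivially.

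The sharpness is the more delicate part, and I would establish it via the extremal sequence~\eqref{extremal_Taikov_element}. Fix ${\bf h}\in\RR_+^{m+1}$ and let $N\to\infty$ in $x_{{\bf h},N}$; repeating the convergence argument from the proof of Theorem~\ref{Taikov_type_generic} (closability of $g_f$ together with the explicit computations recorded there) yields $\left<f, Ax_{{\bf h},N}\right>\to G({\bf h}) := \widetilde{\sum\limits_{n\in M}}\frac{\left|\left<f, Ae_n\right>\right|^2}{b_{n,{\bf h}}}$ and $\left\|B_jx_{{\bf h},N}\right\|_{H'}^2\to\beta_j({\bf h}) := \widetilde{\sum\limits_{n\in M}}\frac{\left|\left<f, Ae_n\right>\right|^2\left\|B_je_n\right\|_{H'}^2}{b_{n,{\bf h}}^2}$, where the weights satisfy $\sum_j h_j\beta_j({\bf h}) = G({\bf h})$. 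Applying the same weighted AM--GM inequality to the numbers $\beta_j({\bf h})$ under this constraint gives the reverse estimate
\[
	\frac{G({\bf h})^2}{\prod_{j=0}^m\beta_j({\bf h})^{\lambda_j}} \geqslant \prod_{j=0}^m\lambda_j^{-\lambda_j}\cdot\prod_{j=0}^m h_j^{\lambda_j}\,G({\bf h}),
\]
so the limiting value of the multiplicative ratio of $x_{{\bf h},N}$ is at least $\prod_j\lambda_j^{-\lambda_j}$ times the quantity $\prod_j h_j^{\lambda_j}G({\bf h})$ appearing under the supremum in~\eqref{condition_constant_new_abst}.

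Finally I would let ${\bf h}$ run through a sequence along which $\prod_j h_j^{\lambda_j}G({\bf h})\to\mathcal{C}$; together with the upper bound just proved, which caps every ratio by $\mathcal{C}\prod_j\lambda_j^{-\lambda_j}$, this squeezes the ratios of a suitable diagonal subsequence of the $x_{{\bf h},N}$ to $\mathcal{C}\prod_j\lambda_j^{-\lambda_j}$, establishing sharpness. I expect the main obstacle to be exactly this double passage to the limit: one must check that finiteness of $\mathcal{C}$ forces each $\beta_j({\bf h})>0$ (otherwise $\prod_j h_j^{\lambda_j}G({\bf h})$ would blow up as $h_j\to+\infty$, contradicting~\eqref{condition_constant_new_abst}), so that dividing by $\prod_j\beta_j({\bf h})^{\lambda_j}$ is legitimate, and that the convergence in $N$ is controlled well enough to extract the diagonal sequence.
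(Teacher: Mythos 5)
Your proposal is correct and follows essentially the same route as the paper's proof: apply Theorem~\ref{Taikov_type_generic} for each fixed ${\bf h}$, minimize over ${\bf h}$ via the weighted AM--GM inequality (your explicit choice $h_j=\lambda_j\left\|B_jx\right\|_{H'}^{-2}$ is exactly the equality case the paper invokes), and establish sharpness with the same extremal sequence~\eqref{extremal_Taikov_element} combined with the reverse AM--GM estimate and a sequence ${\bf h}^p$ realizing the supremum in~\eqref{condition_constant_new_abst}. Your additional checks --- the degenerate case $\left\|B_{j_0}x\right\|_{H'}=0$ handled by $h_{j_0}\to+\infty$, and the observation that $\mathcal{C}<\infty$ together with {\rm (Af1)} forces each $\beta_j({\bf h})>0$ so the division in the sharpness step is legitimate --- are points the paper passes over silently, and you resolve them correctly.
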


\begin{proof}
	Observe that fulfilment of condition~\eqref{condition_0} follows from condition~\eqref{condition_constant_new_abst}. Applying Theorem~\ref{Taikov_type_generic}, for every $x\in\mathcal{D}(A)\cap H_{\bf B}$ and ${\bf h}\in\mathbb{R}_+^{m+1}$, we obtain
	\[
		\left|\left<f,Ax\right>\right|^2 \!\leqslant\! \left(\widetilde{\sum\limits_{n\in M}}\frac{\left|\left<f,Ae_{n}\right>\right|^2}{b_{n,{\bf h}}}\right)\cdot\left(\sum\limits_{j=0}^m h_j\!\left\|B_jx\right\|_{H'}^2\right) \!\leqslant\! \mathcal{C}\cdot\prod\limits_{j=0}^m \!h_j^{-\lambda_j}\!\sum\limits_{j=0}^m h_j\!\left\|B_jx\right\|_{H'}^2.
	\]
	Using the weighted AM-GM inequality~\cite[pp.~74-75]{Cve_12}, we minimize the right hand part of the above inequality
	\[
		\prod\limits_{j=0}^m h_j^{-\lambda_j}\cdot \sum\limits_{j=0}^m h_j \left\|B_jx\right\|_{H'}^2 \!=\! \displaystyle \prod\limits_{j=0}^m h_j^{-\lambda_j} \cdot \sum\limits_{j=0}^m \lambda_j\cdot\frac{h_j \left\|B_jx\right\|_{H'}^2}{\lambda_j} \!\geqslant\! \prod\limits_{j=0}^m \lambda_j^{-\lambda_j} \cdot\prod\limits_{j=0}^m \left\|B_jx\right\|_{H'}^{2\lambda_j}\!.
	\]
	Hence, inequality~\eqref{Taikov_inequality_for_multipliers_abst} is proved. 
	
	Let us show that inequality~\eqref{Taikov_inequality_for_multipliers_abst} is sharp. To this end, consider elements
	\[
		x^\ast_{{\bf h},N} := \widetilde{\sum\limits_{n\in M_N}} \frac{\overline{\left<f,Ae_{n}\right>}}{b_{n,\bf h}
		}\cdot e_{n},\qquad N\in\mathbb{N}.
	\]
	Straightforward calculations show that
	\begin{gather*}
		\left\|B_rx^\ast_{{\bf h},N}\right\|_{H'}^2 = \widetilde{\sum\limits_{n\in M_N}} \frac{\left|\left<f,Ae_{n}\right>\right|^2\cdot\left\|B_re_n\right\|_{H'}^2}{b_{n,\bf h}^2
		},\qquad r=0,1,\ldots,m,\\
		\left|\left<f, Ax^\ast_{{\bf h},N}\right>\right| = \widetilde{\sum\limits_{n\in M_N}} \frac{\left|\left<f,Ae_{n}\right>\right|^2}{b_{n,\bf h}
		} = \sum\limits_{r=0}^m h_r \left\|B_rx^\ast_{{\bf h},N}\right\|_{H'}^2.
	\end{gather*}
	By the weighted AM-GM inequality, for sufficiently large $N$, ensuring the denominator is positive, 
	\[	
		\displaystyle \frac{\left|\left<f, Ax^\ast_{{\bf h},N}\right>\right|}{\prod\limits_{j=0}^m \left\|B_jx^\ast_{{\bf h},N}\right\|_{H'}^{2\lambda_j}} \geqslant \prod\limits_{j=0}^m \left(\frac{h_j}{\lambda_j}\right)^{\lambda_j}.
	\]
	Multiplying both parts of above inequality by $\left|\left<f, Ax^\ast_{{\bf h},N}\right>\right|$, we obtain
	\[
		\displaystyle \frac{\left|\left<f, Ax^\ast_{{\bf h},N}\right>\right|^2}{\prod\limits_{j=0}^m \left\|B_jx^\ast_{{\bf h},N}\right\|_{H'}^{\lambda_j}} \geqslant \prod\limits_{j=0}^m \left(\frac{h_j}{\lambda_j}\right)^{2\lambda_j} \widetilde{\sum\limits_{n\in M_N}} \frac{\left|\left<f,Ae_{n}\right>\right|^2}{b_{n,\bf h}
		}.
	\]
	It remains to choose a sequence $\left\{{\bf h}^p\right\}_{p=1}^\infty\subset\mathbb{R}^{m+1}_+$ such that the function in the right hand part of the later inequality
	converges to its $\sup$ over ${\bf h}\in\mathbb{R}_+^{m+1}$.
\end{proof}

\subsubsection{Finiteness of constant $\mathcal{C}$}

Let us find some sufficient conditions that guarantee finiteness of constant $\mathcal{C}$ defined in~\eqref{condition_constant_new_abst}. Let $\overline{M} := \left\{n\in M\,:\,\forall j\in\{0,1,\ldots,m\}\Rightarrow\left\|B_je_n\right\|_{H'} \ne 0\right\}$. Observe that for finiteness of $\mathcal{C}$ it is necessary that $\left|\left<f, Ae_n\right>\right| = 0$ for every $n\in M\setminus \overline{M}$. 

We start with immediate corollary from AM-GM inequality. 

\begin{lemma}
\label{trivial_sufficient_condition}
	Let $\lambda_0,\ldots,\lambda_m > 0$, $\lambda_0 + \ldots + \lambda_m = 1$, be such that 
	\[
		\widetilde{\sum\limits_{n\in M}}\frac{\left|\left<f,Ae_{n}\right>\right|^2}{\left\|B_0e_n\right\|_{H'}^{2\lambda_0}\ldots\left\|B_me_n\right\|_{H'}^{2\lambda_m}} < \infty.
	\]
	Then constant $\mathcal{C}$ in~\eqref{condition_constant_new_abst} is finite. 
\end{lemma}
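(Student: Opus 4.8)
The plan is to bound the summand $\left|\left<f, Ae_n\right>\right|^2 / b_{n,{\bf h}}$ from above uniformly in ${\bf h}$ by applying the weighted AM--GM inequality to the denominator $b_{n,{\bf h}} = \sum_{j=0}^m h_j\left\|B_je_n\right\|_{H'}^2$, and then to observe that the resulting dependence on ${\bf h}$ cancels identically against the prefactor $\prod_{j=0}^m h_j^{\lambda_j}$ appearing in the definition~\eqref{condition_constant_new_abst} of $\mathcal{C}$.

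First I would dispose of the degenerate indices. As noted just before the statement, finiteness of the hypothesized sum forces $\left<f, Ae_n\right> = 0$ for every $n\in M\setminus\overline{M}$, that is, for every $n$ at which some $\left\|B_je_n\right\|_{H'}$ vanishes. Consequently both the tilde-sum in the hypothesis and each tilde-sum entering $\mathcal{C}$ reduce to ordinary sums over $n\in\overline{M}$, where all factors $\left\|B_je_n\right\|_{H'}$ --- and hence $b_{n,{\bf h}}$ for every ${\bf h}\in\mathbb{R}_+^{m+1}$ --- are strictly positive. This is the only place where the precise meaning of $\widetilde{\sum}$ matters: away from $\overline{M}$ both sides of the AM--GM estimate would vanish and carry no information, so it is essential to know that those indices contribute nothing.

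Next, for $n\in\overline{M}$ I would write $b_{n,{\bf h}} = \sum_{j=0}^m \lambda_j\cdot\frac{h_j\left\|B_je_n\right\|_{H'}^2}{\lambda_j}$ and apply the weighted AM--GM inequality~\cite[pp.~74--75]{Cve_12} to obtain
\[
	b_{n,{\bf h}} \geqslant \prod_{j=0}^m \lambda_j^{-\lambda_j}\cdot\prod_{j=0}^m h_j^{\lambda_j}\cdot\prod_{j=0}^m \left\|B_je_n\right\|_{H'}^{2\lambda_j}.
\]
Dividing, multiplying by $\left|\left<f, Ae_n\right>\right|^2$, summing over $n\in\overline{M}$, and multiplying by $\prod_{j=0}^m h_j^{\lambda_j}$ then yields
\[
	\prod_{j=0}^m h_j^{\lambda_j}\,\widetilde{\sum\limits_{n\in M}} \frac{\left|\left<f, Ae_n\right>\right|^2}{b_{n,{\bf h}}} \leqslant \prod_{j=0}^m \lambda_j^{\lambda_j}\cdot\widetilde{\sum\limits_{n\in M}} \frac{\left|\left<f, Ae_n\right>\right|^2}{\prod_{j=0}^m \left\|B_je_n\right\|_{H'}^{2\lambda_j}},
\]
where the crucial point is that the factor $\prod_{j=0}^m h_j^{\lambda_j}$ cancels exactly and the right-hand side no longer depends on ${\bf h}$.

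Finally, since the right-hand side above is finite by hypothesis and independent of ${\bf h}$, taking the supremum over ${\bf h}\in\mathbb{R}_+^{m+1}$ gives $\mathcal{C}\leqslant \prod_{j=0}^m \lambda_j^{\lambda_j}\cdot\widetilde{\sum\limits_{n\in M}} \frac{\left|\left<f, Ae_n\right>\right|^2}{\prod_{j=0}^m \left\|B_je_n\right\|_{H'}^{2\lambda_j}} < \infty$, as required. The argument is in essence a single application of weighted AM--GM; the only genuine obstacle is the bookkeeping surrounding the set $M\setminus\overline{M}$, which is resolved by the necessary condition recorded before the lemma.
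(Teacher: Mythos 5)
Your proof is correct and is exactly the argument the paper intends: it presents the lemma as an ``immediate corollary from AM-GM inequality,'' and your single application of weighted AM--GM to $b_{n,{\bf h}}$, with the factor $\prod_{j=0}^m h_j^{\lambda_j}$ cancelling to give the ${\bf h}$-independent bound $\mathcal{C}\leqslant \prod_{j=0}^m\lambda_j^{\lambda_j}\,\widetilde{\sum\limits_{n\in M}}\,|\langle f,Ae_n\rangle|^2\big/\prod_{j=0}^m\|B_je_n\|_{H'}^{2\lambda_j}$, is precisely that computation. Your careful handling of the indices in $M\setminus\overline{M}$ via the tilde-sum convention is a welcome bit of rigor that the paper leaves implicit.
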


We proceed to less trivial sufficient condition. Let $d\in\mathbb{N}$ and $M\subset \mathbb{Z}^d$. For convenience, in what follows we denote elements of $\mathbb{R}^d$ in bold type ${\bf x} = \left(x_1,\ldots,x_d\right)$, where $x_1,\ldots,x_d$ are the coordinates of ${\bf x}\in \mathbb{R}^d$. 

Assume that there exist $C_1,C_2 > 0$, ${\bf k},{\bf r}^0,\ldots,{\bf r}^m\in \RR^d$ and sets $\left\{\alpha^j_{n}\right\}_{n \in \mathbb{Z}}$, $j=1,\ldots,d$, of non-negative numbers such that, for every ${\bf n}\in \overline{M}$, 
\begin{equation}
\label{multipliers_cond_01}
	\left|\left<f, Ae_{\bf n}\right>\right| \leqslant C_1\alpha_{\bf n}|{\bf n}|^{\bf k} := C_1\alpha^1_{n_1}\ldots\alpha^d_{n_d}|n_1|^{k_1}\ldots|n_d|^{k_d},
\end{equation}
\begin{equation}
\label{multipliers_cond_02}
	\left\|B_je_{\bf n}\right\|_{H'} \geqslant C_2|{\bf n}|^{{\bf r}^j} := C_2|n_1|^{r^j_1}\ldots|n_d|^{r^j_d}, \qquad j=0,1,\ldots,m.
\end{equation}
Denote by $\mathcal{S}\left({\bf r}^0,\ldots,{\bf r}^m\right)$ the convex hull of points ${\bf r}^0,\ldots,{\bf r}^d$ and by $\text{int}\,\Omega$ -- the interior of the set $\Omega\subset \RR^d$.

\begin{lemma}
\label{Lemma_constant_finiteness}
	Let $m\in\mathbb{Z}_+$, $d\in\mathbb{N}$, ${\bf k},{\bf r}^0,\ldots,{\bf r}^m\in\mathbb{R}^d$ be such that ${\bf k} + \frac 12\cdot{\bf 1}\in{\rm int}\,\mathcal{S}\left({\bf r}^0,\ldots,{\bf r}^m\right)$, and conditions~\eqref{multipliers_cond_01} and~\eqref{multipliers_cond_02} are fulfilled. Let also $\lambda_0,\ldots,\lambda_m > 0$ be such that $\lambda_0+\ldots+\lambda_m = 1$ and ${\bf k} + \frac{1}{2}\cdot {\bf 1} = \lambda_0 {\bf r}^0 + \ldots + \lambda_m{\bf r}^m$, and $\{\alpha_{\bf n}\}_{{\bf n}\in \overline{M}}$ be bounded. Then constant $\mathcal{C}$ in~\eqref{condition_constant_new_abst} is finite.
\end{lemma}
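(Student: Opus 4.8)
My plan is to reduce the supremum defining $\mathcal{C}$ to a single model extremal problem and then resolve it by a level-set (layer-cake) argument in which the two hypotheses play complementary roles: the fact that ${\bf p}:={\bf k}+\tfrac12{\bf 1}$ lies in the \emph{interior} of $\mathcal S({\bf r}^0,\dots,{\bf r}^m)$ will supply decay, while the representation ${\bf p}=\sum_j\lambda_j{\bf r}^j$ with all $\lambda_j>0$ will supply a balance relation. First I would use \eqref{multipliers_cond_01}, \eqref{multipliers_cond_02} and the boundedness $\alpha_{\bf n}\leqslant A_0$ to bound, for ${\bf n}\in\overline M$, the general term of $\mathcal C$ from above by $\tfrac{C_1^2A_0^2}{C_2^2}\,\frac{|{\bf n}|^{2{\bf k}}}{\sum_{j}h_j|{\bf n}|^{2{\bf r}^j}}$ (the indices ${\bf n}\in M\setminus\overline M$ contribute a harmless amount, since finiteness of $\mathcal C$ forces the numerator to vanish there). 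It therefore suffices to bound $\sup_{{\bf h}\in\RR_+^{m+1}}\prod_j h_j^{\lambda_j}\sum_{{\bf n}\in\overline M}\frac{|{\bf n}|^{2{\bf k}}}{\sum_j h_j|{\bf n}|^{2{\bf r}^j}}$. Comparing this lattice sum with the corresponding integral over the positive octant (grouping $|n_i|\in[2^{\ell_i},2^{\ell_i+1})$), substituting $|x_i|=e^{u_i}$ and $h_j=e^{s_j}$, and using ${\bf k}+\tfrac12{\bf 1}=\sum_j\lambda_j{\bf r}^j={\bf p}$, the problem becomes the uniform estimate $\sup_{{\bf s}\in\RR^{m+1}}\int_{\RR^d_{\geqslant 0}}\frac{d{\bf w}}{\sum_{j=0}^m e^{\psi_j({\bf w})-\sigma_j}}<\infty$, where $\psi_j({\bf w}):=\langle{\bf r}^j-{\bf p},{\bf w}\rangle$ and $\sigma_j:=\sum_i\lambda_i s_i-s_j$. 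The representation of ${\bf p}$ gives the two balance relations $\sum_j\lambda_j\psi_j\equiv0$ and $\sum_j\lambda_j\sigma_j=0$.

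Next I record two structural facts. Bounding the denominator below by its largest term, the integrand is at most $e^{G({\bf w})}$ with $G({\bf w})=\min_j(\sigma_j-\psi_j({\bf w}))$; since the minimum is dominated by the $\lambda$-weighted average and both balance relations hold, $G({\bf w})\leqslant\sum_j\lambda_j(\sigma_j-\psi_j({\bf w}))=0$ for every ${\bf w}$ and every ${\bf s}$. The interior hypothesis gives the second fact: as $B({\bf p},\delta)\subset\mathcal S$ for some $\delta>0$, for each ${\bf v}\ne\0$ the point ${\bf p}+\delta{\bf v}/|{\bf v}|$ lies in the hull, whence $\max_j\psi_j({\bf v})\geqslant\delta|{\bf v}|$; moreover the vertices ${\bf r}^j$ affinely span $\RR^d$, so $\{{\bf r}^j-{\bf p}\}_j$ linearly span $\RR^d$ and the map ${\bf w}\mapsto(\psi_j({\bf w}))_j$ is injective.

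The crux is the uniformity in ${\bf s}$, and here is how I would obtain it. Decompose $\RR^d_{\geqslant 0}$ into the cells $E_j=\{{\bf w}\geqslant\0:\ \sigma_j-\psi_j({\bf w})=G({\bf w})\}$; on $E_j$ one has $e^{G}=e^{\sigma_j-\psi_j({\bf w})}$, a pure exponential in the linear form $\psi_j$, which I slice by the value $t=G\in(-\infty,0]$ and integrate by the layer-cake formula. The decisive step is to bound, uniformly in ${\bf s}$, the $(d-1)$-area of the slice $S_t=\{{\bf w}\in E_j:G({\bf w})=t\}$. Setting $\delta_i:=(\sigma_i-t)-\psi_i({\bf w})\geqslant0$ on $S_t$ (with $\delta_j=0$), the relation $\sum_i\lambda_i\psi_i\equiv0$ together with $\sum_i\lambda_i\sigma_i=0$ forces the exact deficit identity $\sum_i\lambda_i\delta_i=-t=|t|$; since every $\lambda_i>0$ this yields $\delta_i\leqslant|t|/\lambda_i$, hence $|\psi_i({\bf w}-{\bf w}')|\leqslant|t|/\min_i\lambda_i$ for all ${\bf w},{\bf w}'\in S_t$ and every $i$. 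Injectivity of ${\bf w}\mapsto(\psi_j({\bf w}))_j$ then gives $\diam S_t\leqslant C|t|$ with $C$ independent of ${\bf s}$, so $\mathcal H^{d-1}(S_t)\lesssim|t|^{d-1}$ and
\[
\int_{E_j}e^{G({\bf w})}\,d{\bf w}\lesssim\int_{-\infty}^{0}e^{t}|t|^{d-1}\,dt=\Gamma(d)<\infty ,
\]
uniformly in ${\bf s}$. Summing over the $m+1$ cells bounds the integral, and hence $\mathcal C<\infty$.

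The only genuine obstacle is exactly this uniformity over ${\bf h}$ (equivalently, over the noncompact family of shifts $\sigma_j$). A single weighted AM--GM, as in Lemma~\ref{trivial_sufficient_condition}, is too crude here: matching the $\prod_j h_j^{\lambda_j}$ factor forces the weights to be precisely $\lambda_j$, which returns only the constant, non-summable bound $\prod_j\lambda_j^{\lambda_j}$ per lattice point. The resolution is the interplay of the two hypotheses — the strict interior condition localizes the peak by giving decay of $\psi_j$ in every direction, while the balance $\sum_j\lambda_j\sigma_j=0$ simultaneously caps the peak height at $e^{G}\leqslant1$ and, through the deficit identity, confines the near-peak slice to diameter $O(|t|)$ uniformly in ${\bf h}$. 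I expect the bookkeeping of the $M\setminus\overline M$ terms and the sum-to-integral comparison to be routine; the deficit identity on $S_t$ is the one step that must be carried out with care.
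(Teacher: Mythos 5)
Your proof is correct, and it takes a genuinely different route from the paper's. The paper never leaves the level of AM--GM: it perturbs ${\bf p}:={\bf k}+\frac12\cdot{\bf 1}$ to $d+1$ auxiliary points ${\bf s}^0={\bf k}+\frac{1-\delta}{2}\cdot{\bf 1}$ and ${\bf s}^j={\bf k}+\frac12\cdot{\bf 1}+\frac{\delta}{2}\cdot{\bf 1}_j$ still inside ${\rm int}\,\mathcal{S}\left({\bf r}^0,\ldots,{\bf r}^m\right)$, applies weighted AM--GM with the corresponding convex weights $\lambda_s^j$ to get $d+1$ lower bounds $b_{{\bf n},{\bf h}}\geqslant C_2\prod_s h_s^{\lambda_s^j}|{\bf n}|^{2{\bf s}^j}$, reparametrizes the coefficients $f_j=\prod_s h_s^{\lambda_s^j}$ by scales $\xi,\tau_1,\ldots,\tau_d$, and after a second AM--GM over the sign vectors ${\bf q}^e$ factors the whole estimate into $d$ copies of the one-dimensional model $\sup_{\tau>0}\sum_{n=1}^\infty \tau\left((\tau n)^{1-\varepsilon}+(\tau n)^{1+\varepsilon}\right)^{-1}<\infty$, verified by splitting $\tau\geqslant 1$ and $\tau<1$. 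Your exponential substitution replaces this bookkeeping with a tropical/polyhedral mechanism: the bound $e^{G}\leqslant 1$ from the balance relations, the cell decomposition along $\min_j(\sigma_j-\psi_j)$, and above all the deficit identity $\sum_i\lambda_i\delta_i=|t|$ on level slices, which together with injectivity of ${\bf w}\mapsto(\psi_i({\bf w}))_i$ (valid, since nonempty interior forces $\{{\bf r}^i-{\bf p}\}$ to span $\mathbb{R}^d$) yields $\diam S_t\leqslant C|t|$ uniformly in ${\bf h}$ and hence the uniform $\Gamma(d)$ bound per cell --- this is exactly the right uniformity mechanism and I find it sound. Two small patches you should add: (i) the hypotheses allow ${\bf r}^{j_0}={\bf p}$ for some $j_0$ (the ${\bf r}^j$ need not be vertices of the hull), in which case $\psi_{j_0}\equiv 0$, $G\equiv\sigma_{j_0}$ on $E_{j_0}$ and the coarea factor $|\nabla G|^{-1}=|{\bf r}^{j_0}-{\bf p}|^{-1}$ degenerates; but your own deficit identity then gives $\diam E_{j_0}\leqslant C|\sigma_{j_0}|$, so $\int_{E_{j_0}}e^{G}\,{\rm d}{\bf w}\leqslant c_dC^d\,e^{\sigma_{j_0}}|\sigma_{j_0}|^d$ is uniformly bounded --- a one-line fix inside your framework; (ii) a sentence confirming that the dyadic sum-to-integral comparison is uniform in ${\bf h}$, which holds because numerator and denominator are positive combinations of monomials, each varying by a factor depending only on the exponents across a dyadic box. (Your treatment of ${\bf n}\in M\setminus\overline{M}$ matches the paper's own remark preceding the lemma, so no complaint there, and your observation that a single AM--GM with weights $\lambda_j$ only returns the non-summable bound $\prod_j\lambda_j^{\lambda_j}$ per lattice point correctly identifies why Lemma~\ref{trivial_sufficient_condition} does not suffice.) As for what each approach buys: the paper's argument is entirely elementary and self-contained but its auxiliary points and parameters are ad hoc; yours is more conceptual, gives an explicit uniform bound of order $(m+1)\Gamma(d)$ times a geometric constant, and makes transparent where each hypothesis enters --- interiority supplies spanning/injectivity and decay, the strictly positive convex representation supplies the two balance relations --- at the mild cost of coarea/Hausdorff-measure machinery.
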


\begin{proof}
	For $j=1,\ldots,d$, let ${\bf 1}_j:=(0,\ldots,0,1,0,\ldots,0) \in\mathbb{R}^d$ be such that $1$ is located on the position with index $j$. There exists $\delta \in (0,1)$ such that points ${\bf s}^0 := {\bf k} + \frac{1-\delta}{2}\cdot {\bf 1}$ and ${\bf s}^j := {\bf k} + \frac 12\cdot{\bf 1} + \frac {\delta}2\cdot {\bf 1}_j$, $j=1,\ldots,d$, belong to ${\rm int}\,\mathcal{S}\left({\bf r}^0,\ldots,{\bf r}^m\right)$, and, for $j=0,\ldots,d$, there exist positive numbers $\lambda_{0}^j,\ldots,\lambda_m^j$ such that $\lambda_0^j + \ldots + \lambda_m^j = 1$, $\lambda_0^j{\bf r}^0 + \ldots + \lambda_m^j{\bf r}^m = {\bf s}^j$. Evidently, $\lambda_s^0 + \ldots + \lambda_s^d = (d+1)\lambda_s$, for every $s=0,\ldots,m$. Then, for every $j=0,\ldots,d$, ${\bf n}\in \overline{M}$ and ${\bf h}\in\mathbb{R}_+^{m+1}$, using the weighted AM-GM inequality we have
	\[
		b_{{\bf n}, \bf h} \geqslant C_2\sum\limits_{s=0}^m \lambda_{s}^j h_s |{\bf n}|^{2{\bf r}^s}\geqslant  C_2\prod\limits_{s=0}^m h_s^{\lambda_s^j} \cdot |{\bf n}|^{2\sum\limits_{s=0}^m \lambda_s^j{\bf r}^s} = C_2\prod\limits_{s=0}^m h_s^{\lambda_s^j} \cdot |{\bf n}|^{2{\bf s}^j}.
	\]
	Combining above inequalities over $j=0,\ldots,d$ and denoting $f_j := \prod\limits_{s=0}^m h_s^{\lambda_s^j}$, we obtain
	\[
		\prod\limits_{s=0}^m h_s^{\lambda_s} \widetilde{\sum\limits_{{\bf n}\in M}} \frac{\left|\left<f,Ae_{\bf n}\right>\right|^2}{b_{{\bf n},\bf h}
		} \leqslant \frac{C_1(d+1)}{C_2}\cdot \prod\limits_{j=0}^d f_j^{\frac 1{d+1}}\sum\limits_{{\bf n}\in \overline{M}} \frac{\alpha_{\bf n}^2 |{\bf n}|^{2{\bf k}}}{\sum\limits_{j=0}^d f_j |{\bf n}|^{2{\bf s}^j}}.
	\]
	Clearly, there exist $\xi,\tau_1,\ldots,\tau_d>0$ such that $f_0 = \xi\cdot \tau_1^{1-\delta}\ldots\tau_d^{1-\delta}$ and $f_j = \xi\cdot \tau_1\ldots\tau_d\cdot\tau_j^{\delta}$, $j=1,\ldots,d$. Without loss of generality we may assume that $|\alpha_{\bf n}|\leqslant 1$ for every ${\bf n}\in \overline{M}$. Then
	\[
		\prod\limits_{s=0}^m h_s^{\lambda_s} \widetilde{\sum\limits_{{\bf n}\in M}} \frac{\left|\left<f,Ae_{\bf n}\right>\right|^2}{b_{{\bf n},\bf h}
		} \leqslant \sum\limits_{{\bf n}\in \overline{M}}\frac{\frac{C_1(d+1)}{C_2}\cdot\tau_1\ldots\tau_d}{\prod\limits_{j=1}^d \left(\tau_j |{n_j}|\right)^{1-\delta} + \prod\limits_{j=1}^d \tau_j |{n_j}| \cdot \sum\limits_{j=1}^d \left(\tau_j|{n_j}|\right)^{(d+1)\delta}}.
	\]
	Taking $\sup$ over ${\bf h}\in\mathbb{R}_+^{m+1}$ in the left-hand part of above inequality and supremum over $\tau_1,\ldots,\tau_d > 0$ in its right-hand part, we obtain
	\[
		\mathcal{C} \!\leqslant\! \sup\limits_{\tau_1,\ldots,\tau_d > 0} \sum\limits_{{\bf n}\in \overline{M}}\frac{\frac{C_1(d+1)}{C_2}\cdot\tau_1\ldots\tau_d}{\prod\limits_{j=1}^d \left(\tau_j |{n_j}|\right)^{1-\delta} + \prod\limits_{j=1}^d \tau_j |{n_j}| \cdot \sum\limits_{j=1}^d \left(\tau_j|{n_j}|\right)^{(d+1)\delta}} =:\frac{C_1(d+1)}{C_2}\cdot\mathcal{D}.
	\]
	Next, for $e\subset \{1,\ldots,d\}$, let ${\bf q}^e = \left(q^e_1,\ldots,q^e_d\right)\in\mathbb{R}^d$ be such that $q^e_j = 1$ if $j\in e$ and $q^e_j = -1$ if $j\not\in e$. Obviously, there exists $\varepsilon > 0$ such that, for every $e\subset \{1,\ldots, d\}$, the point ${\bf k} + \frac{1}{2}\cdot {\bf q}^e$ belongs to ${\rm int}\,\mathcal{S}\left({\bf s}^0,\ldots,{\bf s}^d\right)$. 
	Then by the weighted AM-GM inequality,
	\[
		\prod\limits_{j=1}^d \left(\tau_j |{n_j}|\right)^{1-\delta} + \prod\limits_{j=1}^d \tau_j |{n_j}| \cdot \sum\limits_{j=1}^d \left(\tau_j|{n_j}|\right)^{(d+1)\delta} \geqslant \prod\limits_{j=1}^d \left(\tau_j |{n_j}|\right)^{q_j^e}. 
	\]
	Hence,
	\begin{gather*}
		\mathcal{D} \leqslant \sup\limits_{\tau_1,\ldots,\tau_d > 0} \sum\limits_{{\bf n}\in\mathbb{Z}^d\,:\,|{\bf n}|^{\bf 1}\ne 0} \frac{2^d\tau_1\ldots\tau_d}{\sum\limits_{e\subset\{1,\ldots,d\}} \prod\limits_{j=1}^d \left(\tau_j |{n_j}|\right)^{q_j^e}} \\ 
		= 2^{2d}\left(\sup\limits_{\tau > 0} \sum\limits_{n=1}^\infty\frac{\tau}{(\tau n)^{1-\varepsilon} + (\tau n)^{1+\varepsilon}}\right)^d.
	\end{gather*}
	As a result, we should prove only that the function
	\[
		f(\tau) := \sum\limits_{n=1}^\infty\frac{\tau}{(\tau n)^{1-\varepsilon} + (\tau n)^{1+\varepsilon}},\qquad \tau\in\mathbb{R}_+,
	\] 
	is bounded. Indeed, for $\tau\geqslant 1$,
	\[
		f(\tau) \leqslant \tau^{-\varepsilon}\cdot \sum\limits_{n=1}^\infty \frac{1}{n^{1+\varepsilon}} \leqslant \sum\limits_{n=1}^\infty \frac{1}{n^{1+\varepsilon}} <\infty,
	\]
	and, for $\tau \in (0,1)$, 
	\begin{gather*}
		f(\tau) \leqslant \sum\limits_{n\in \mathbb{N}\,:\,n < \frac 1{\tau}} \frac{\tau^\varepsilon}{n^{1-\varepsilon}} + \sum\limits_{j=1}^\infty \sum\limits_{n\in \mathbb{N}\,:\,\frac{j}{\tau}\leqslant n < \frac{j+1}{\tau}} \frac{\tau}{(\tau n)^{1+\varepsilon}} \\ 
		\leqslant \int_0^{\frac{1}{\tau}}\frac{\tau^{\varepsilon}{\rm d}t}{t^{1-\varepsilon}} + \sum\limits_{j=1}^\infty \frac{\tau}{j^{1+\varepsilon}}\left(\frac{1}{\tau} + 1\right) < \infty.
	\end{gather*}
	Combining above two inequalities we conclude that $\mathcal{D} < \infty$ and, hence, $\mathcal{C} < \infty$, which finishes the proof.
%
\end{proof}

\begin{rem}
\label{important_rem}
	Under assumptions of Lemma~\ref{Lemma_constant_finiteness}, by combining Lemma~\ref{trivial_sufficient_condition} with Lemma~\ref{Lemma_constant_finiteness}, we obtain that $\mathcal{C}<\infty$ for any $\lambda_0,\ldots,\lambda_m>0$ such that $\lambda_0+\ldots+\lambda_m=1$ and $\lambda_0{\bf r}^0 + \ldots + \lambda_m{\bf r}^m \leqslant {\bf k} + \frac{1}{2}\cdot{\bf 1}$.
\end{rem}

\begin{rem}
	Under specific choice of sequences of numbers $\left\{\alpha_n^j\right\}_{n\in \mathbb{Z}}$ the constant $\mathcal{C}$ in~\eqref{condition_constant_new_abst} can be finite for any $\lambda_0,\ldots,\lambda_m>0$ such that $\lambda_0+\ldots+\lambda_m = 1$. For example, let $m\in\mathbb{Z}_+$, $d\in\mathbb{N}$, ${\bf k},{\bf r}^0,\ldots,{\bf r}^m\in\mathbb{R}^d$ be such that ${\bf k} + \frac 12\cdot{\bf 1}\in{\rm int}\,\mathcal{S}\left({\bf r}^0,\ldots,{\bf r}^m\right)$, and conditions~\eqref{multipliers_cond_01} and~\eqref{multipliers_cond_02} are fulfilled. If there exist constants $\rho_1,\ldots\rho_d > 0$ such that $\alpha_n^j = e^{-\rho_j |n|}$, $n\in \mathbb{Z}$ and $j=1,\ldots,d$, then $\mathcal{C} < \infty$. 
\end{rem}

\subsection{Multiplicative Hardy-Littlewood-P\'olya inequality}

\begin{theorem}
	\label{HLP_for_special_multipliers_abst}
	Let $m\in\mathbb{Z}_+$, ${\bf h}\in \mathbb{R}_+^{m+1}$, operators $B_0,B_1,\ldots,B_m$ satisfy conditions~{\rm (B1)}--{\rm (B4)}, operator $A$ satisfy conditions~{\rm (A1)}--{\rm (A3)}. Also, let $\lambda_0,\lambda_1,\ldots,\lambda_m> 0$ be such that $\lambda_0+\lambda_1+\ldots+\lambda_m = 1$ and
	\begin{equation}
	\label{condition_constant_new_HLP_abst}
		\mathfrak{C}(A,{\bf B},\overline{\lambda}) = \mathfrak{C} := \sup\limits_{{\bf h}\in \mathbb{R}_+^{m+1}} \prod\limits_{j=0}^mh_j^{\lambda_j}\sup\limits_{f\in H':\atop \|f\|_{H'}\leqslant 1} \widetilde{\sum\limits_{n\in M}} \frac{\left|\left(f,Ae_n\right)_{H'}\right|^2}{b_{n,\bf h}
		} < \infty.
	\end{equation}
	Then, for every $x\in\mathcal{D}(A)\cap H_{\bf B}$, there holds sharp inequality
	\begin{equation}
	\label{HLP_final_multipliers_abst}
		\left\|Ax\right\|_{H'} \leqslant \sqrt{\mathfrak{C}\cdot\prod\limits_{j=0}^m\lambda_j^{-\lambda_j}} \cdot\prod\limits_{j=0}^m \left\|B_jx\right\|_{H'}^{\lambda_j}.
	\end{equation}
\end{theorem}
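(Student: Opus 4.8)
The plan is to run the argument for the multiplicative Taikov type inequality (Theorem~\ref{Taikov_for_multipliers_abst}) almost verbatim, deducing the multiplicative estimate from its mean-squared counterpart (Theorem~\ref{HLP}) and then optimizing the free weight vector ${\bf h}$ by the weighted AM--GM inequality. The only structural change is that the supremum over $f$ has been pulled inside the constant, so it rides along passively in the inequality direction and becomes active only in the sharpness proof.

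First I would verify applicability of the mean-squared estimate. Choosing ${\bf h} = {\bf 1}$ in the outer supremum defining $\mathfrak{C}$ in~\eqref{condition_constant_new_HLP_abst} gives $\sup_{\|f\|_{H'}\leqslant 1}\widetilde{\sum\limits_{n\in M}} \frac{\left|\left(f,Ae_n\right)_{H'}\right|^2}{b_{n,{\bf 1}}} \leqslant \mathfrak{C} < \infty$, which is exactly condition~\eqref{condition_2}. Hence Theorem~\ref{HLP} applies; squaring~\eqref{HLP_type_inequality} and using representation~\eqref{prenorm_representation} from Lemma~\ref{Bessel_inequality} yields, for every ${\bf h}\in\RR_+^{m+1}$ and $x\in\mathcal{D}(A)\cap H_{\bf B}$,
\[
	\left\|Ax\right\|_{H'}^2 \leqslant \left(\sup\limits_{f\in H':\,\|f\|_{H'}\leqslant 1}\widetilde{\sum\limits_{n\in M}} \frac{\left|\left(f,Ae_n\right)_{H'}\right|^2}{b_{n,{\bf h}}}\right)\sum\limits_{j=0}^m h_j\left\|B_jx\right\|_{H'}^2 .
\]
By the definition of $\mathfrak{C}$, the parenthesized factor is at most $\mathfrak{C}\prod\limits_{j=0}^m h_j^{-\lambda_j}$, so $\left\|Ax\right\|_{H'}^2 \leqslant \mathfrak{C}\prod\limits_{j=0}^m h_j^{-\lambda_j}\sum\limits_{j=0}^m h_j\left\|B_jx\right\|_{H'}^2$ for every ${\bf h}$. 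Writing $\sum\limits_{j=0}^m h_j\|B_jx\|_{H'}^2 = \sum\limits_{j=0}^m \lambda_j\cdot\frac{h_j\|B_jx\|_{H'}^2}{\lambda_j}$ and invoking the weighted AM--GM inequality with weights $\lambda_j$ (precisely the step from the proof of Theorem~\ref{Taikov_for_multipliers_abst}) gives $\prod\limits_{j=0}^m h_j^{-\lambda_j}\sum\limits_{j=0}^m h_j\|B_jx\|_{H'}^2 \geqslant \prod\limits_{j=0}^m\lambda_j^{-\lambda_j}\prod\limits_{j=0}^m\|B_jx\|_{H'}^{2\lambda_j}$, with equality approached when $h_j\|B_jx\|_{H'}^2/\lambda_j$ is independent of $j$. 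Taking the infimum over ${\bf h}$ and then square roots produces~\eqref{HLP_final_multipliers_abst}.

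For sharpness I would recycle the extremal family $x^\varepsilon_{{\bf h},N} = \widetilde{\sum\limits_{n\in M_N}}\frac{\overline{\left(f_\varepsilon,Ae_n\right)_{H'}}}{b_{n,{\bf h}}}\cdot e_n$ built in the proof of Theorem~\ref{HLP}, where $f_\varepsilon$ nearly attains the inner supremum. The computations there supply $\left\|Ax^\varepsilon_{{\bf h},N}\right\|_{H'}^2 \geqslant \big(\widetilde{\sum\limits_{n\in M_N}}\frac{|(f_\varepsilon,Ae_n)_{H'}|^2}{b_{n,{\bf h}}}\big)^2$ and $\left\|x^\varepsilon_{{\bf h},N}\right\|_{{\bf B},{\bf h}}^2 = \widetilde{\sum\limits_{n\in M_N}}\frac{|(f_\varepsilon,Ae_n)_{H'}|^2}{b_{n,{\bf h}}}$. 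Bounding $\prod\limits_{j=0}^m\|B_jx^\varepsilon_{{\bf h},N}\|_{H'}^{2\lambda_j}$ from above by $\prod\limits_{j=0}^m(\lambda_j/h_j)^{\lambda_j}\|x^\varepsilon_{{\bf h},N}\|_{{\bf B},{\bf h}}^2$ through the same AM--GM step leads to
\[
	\frac{\left\|Ax^\varepsilon_{{\bf h},N}\right\|_{H'}^2}{\prod\limits_{j=0}^m\left\|B_jx^\varepsilon_{{\bf h},N}\right\|_{H'}^{2\lambda_j}} \geqslant \prod\limits_{j=0}^m\lambda_j^{-\lambda_j}\cdot\prod\limits_{j=0}^m h_j^{\lambda_j}\,\widetilde{\sum\limits_{n\in M_N}}\frac{\left|\left(f_\varepsilon,Ae_n\right)_{H'}\right|^2}{b_{n,{\bf h}}} .
\]
The remaining work is a coordinated passage to the limit: letting $N\to\infty$ replaces $M_N$ by $M$, letting $\varepsilon\to 0^+$ pushes the inner sum up to $\sup_{\|f\|_{H'}\leqslant1}\widetilde{\sum\limits_{n\in M}}(\cdots)$, and choosing a sequence $\{{\bf h}^p\}\subset\RR_+^{m+1}$ along which $\prod\limits_{j=0}^m h_j^{\lambda_j}\sup_f\widetilde{\sum\limits_{n\in M}}(\cdots)$ tends to $\mathfrak{C}$ drives the displayed ratio up to $\mathfrak{C}\prod\limits_{j=0}^m\lambda_j^{-\lambda_j}$.

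I expect the inequality direction to be entirely routine; the delicate point is the sharpness argument, where the outer supremum over ${\bf h}$, the inner supremum over $f$, and the truncation index $N$ must be handled simultaneously. In particular one must keep the denominators $\|B_jx^\varepsilon_{{\bf h},N}\|_{H'}$ positive along the chosen sequences (using conditions~(A2) and~(B3)) and ensure the weighted AM--GM gap closes in the limit, which is exactly what pins the constant down to $\sqrt{\mathfrak{C}\cdot\prod\limits_{j=0}^m\lambda_j^{-\lambda_j}}$.
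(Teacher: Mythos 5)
Your proposal is correct and coincides with the paper's intended proof: the authors explicitly state that Theorem~\ref{HLP_for_special_multipliers_abst} is proved ``using the arguments similar to those applied to prove'' the multiplicative Taikov inequality, which is exactly your route --- deduce the bound from the mean-squared Theorem~\ref{HLP} (noting ${\bf h}={\bf 1}$ in~\eqref{condition_constant_new_HLP_abst} yields~\eqref{condition_2}), optimize over ${\bf h}$ by weighted AM--GM, and establish sharpness on the family $x^\varepsilon_{{\bf h},N}$ from the proof of Theorem~\ref{HLP} via a coordinated limit in $N$, $\varepsilon$ and ${\bf h}$. Your handling of the one delicate point --- positivity of the denominators $\prod_{j}\|B_jx^\varepsilon_{{\bf h},N}\|_{H'}^{\lambda_j}$ for large $N$ via (A2) and (B3) --- matches the paper's treatment in the analogous Taikov-type proof, so there is nothing to correct.
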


We can prove Theorem~\ref{HLP_for_special_multipliers_abst} using the arguments similar to those applied to prove Theorem~\ref{Taikov_inequality_for_multipliers_abst}. 

Let us present some sufficient conditions that guarantee finiteness of constant $\mathfrak{C}$ in~\eqref{condition_constant_new_HLP_abst}. To this end we assume additionally that:
\begin{equation}
\label{multipliers_conditions_1}
	\forall n',n''\in M, n'\ne n'',\quad \Rightarrow\quad \left(Ae_{n'}, Ae_{n''}\right)_{H'} = 0.
\end{equation}
From~\eqref{multipliers_conditions_1} it follows immediately that
\[
	\sup\limits_{f\in H':\atop \|f\|_{H'}\leqslant 1} \widetilde{\sum\limits_{n\in M}} \frac{\left|\left(f,Ae_n\right)_{H'}\right|^2}{b_{n,\bf h}
	} = \widetilde{\sup\limits_{n\in M}}\frac{\|Ae_n\|_{H'}^2}{b_{n,\bf h}
	}.
\]
Let $\overline{M} := \left\{n\in M\,:\,\forall j\in\{0,1,\ldots,m\}\Rightarrow\left\|B_je_n\right\|_{H'}\ne 0\right\}$. Evidently, for finiteness of constant $\mathfrak{C}$ it is necessary that $\left\|Ae_n\right\|_{H'} = 0$ for every $n\in M\setminus \overline{M}$.





Consider more specific sequences of $\left\|Ae_n\right\|_{H'}$'s and $\left\|B_je_n\right\|_{H'}$'s. Let $d\in\mathbb{N}$, $M\subset \ZZ^d$, ${\bf k},{\bf r}^0,\ldots,{\bf r}^m\in \mathbb{R}^d$. By $\mathcal{S}\left({\bf r}^0,\ldots,{\bf r}^m\right)$ we denote the convex hull of points ${\bf r}^0,{\bf r}^1,\ldots,{\bf r}^m$. Let also $\left\{g_n\right\}_{n\in \ZZ}\subset [0,+\infty)$ be any sequence of non-negative numbers and let
\begin{equation}
\label{multiplers_special_1}
	\left\|Ae_{\bf n}\right\|_{H'} = g_{\bf n}^{\bf k} := g^{k_1}_{n_1}\ldots g^{k_d}_{n_d}, \qquad {\bf n}\in M,
\end{equation}
\begin{equation}
\label{multipliers_special_2}
	\left\|B_je_{\bf n}\right\|_{H'} = g_{\bf n}^{{\bf r}^j} =  g_{n_1}^{r^j_1}\ldots g_{n_d}^{r^j_d}, \qquad j=0,1,\ldots,m,\quad{\bf n}\in M.
\end{equation}
Here we assume that $0^0 := 1$.

\begin{theorem}
\label{HLP_for_special_multipliers}
	Let $m\in\mathbb{Z}_+$, $d\in\mathbb{N}$, linear operators $B_0,B_1,\ldots,B_m$ satisfy conditions~{\rm (B1)},~{\rm (B2)},~{\rm (B4)}, linear operator $A$ satisfy conditions~{\rm (A1)},~{\rm (A3)} and~\eqref{multipliers_conditions_1}. Let also ${\bf k},{\bf r}^0,\ldots,{\bf r}^m\in\mathbb{R}^{d}$, $\lambda_0,\lambda_1,\ldots,\lambda_m> 0$ and sequence $\left\{g_n\right\}_{n\in M}\subset [0,+\infty)$ be such that $\lambda_0+\lambda_1+\ldots+\lambda_m = 1$, ${\bf k} = \lambda_0{\bf r}^0 + \ldots + \lambda_m{\bf r}^m\in\mathcal{S}\left({\bf r}^0,\ldots,{\bf r}^m\right)$, equalities~\eqref{multiplers_special_1} and~\eqref{multipliers_special_2} are satisfied, and there exist ${\bf n}_0\in M$ such that $g_{{\bf n}_0}^{\bf k} \ne 0$. Then, for every $x\in\mathcal{D}(A)\cap H_{\bf B}$, 
	\begin{equation}
	\label{HLP_final_multipliers}
		\left\|Ax\right\|_{H'} \leqslant \prod\limits_{j=0}^m \left\|B_jx\right\|_{H'}^{\lambda_j}.
	\end{equation}
	Inequality~\eqref{HLP_final_multipliers} turns into equality on every $e_{\bf n}$, ${\bf n}\in M$, such that $g_{\bf n}^{\bf k}\ne 0$.
\end{theorem}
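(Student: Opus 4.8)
The plan is to reduce the statement to the abstract multiplicative inequality of Theorem~\ref{HLP_for_special_multipliers_abst} by showing that, under the present hypotheses, the constant $\mathfrak{C}$ in~\eqref{condition_constant_new_HLP_abst} satisfies $\mathfrak{C}\leqslant\prod_{j=0}^m\lambda_j^{\lambda_j}$, so that the prefactor $\sqrt{\mathfrak{C}\cdot\prod_{j=0}^m\lambda_j^{-\lambda_j}}$ is at most $1$. The cornerstone is the pointwise identity
\[
	\left\|Ae_{\bf n}\right\|_{H'} = \prod_{j=0}^m \left\|B_je_{\bf n}\right\|_{H'}^{\lambda_j},\qquad {\bf n}\in M,
\]
which I would obtain by combining~\eqref{multiplers_special_1} and~\eqref{multipliers_special_2} with the relation ${\bf k} = \lambda_0{\bf r}^0 + \ldots + \lambda_m{\bf r}^m$: working one coordinate at a time gives $\prod_{j=0}^m\bigl(g_{n_i}^{r_i^j}\bigr)^{\lambda_j} = g_{n_i}^{\sum_{j}\lambda_j r_i^j} = g_{n_i}^{k_i}$, and multiplying over $i=1,\ldots,d$ yields $\prod_{j=0}^m g_{\bf n}^{\lambda_j{\bf r}^j} = g_{\bf n}^{\bf k}$. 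This identity already settles the equality claim, since on $e_{\bf n}$ the left side of~\eqref{HLP_final_multipliers} is $\|Ae_{\bf n}\|_{H'}=g_{\bf n}^{\bf k}$ and the right side is exactly $\prod_{j=0}^m\|B_je_{\bf n}\|_{H'}^{\lambda_j}$.

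Next I would verify the two hypotheses of Theorem~\ref{HLP_for_special_multipliers_abst} that are not assumed here, namely (A2) and (B3): the existence of ${\bf n}_0$ with $g_{{\bf n}_0}^{\bf k}\ne 0$ yields $\|Ae_{{\bf n}_0}\|_{H'}=g_{{\bf n}_0}^{\bf k}\ne 0$, which is (A2), and then the pointwise identity forces $\|B_je_{{\bf n}_0}\|_{H'}\ne 0$ for every $j$, which is (B3). To bound $\mathfrak{C}$ I would first invoke hypothesis~\eqref{multipliers_conditions_1}, which the paper has already used to replace the supremum over $f$ by $\widetilde{\sup}_{n\in M}\|Ae_n\|_{H'}^2/b_{n,{\bf h}}$. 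Substituting the pointwise identity and expanding $b_{{\bf n},{\bf h}}=\sum_{j=0}^m h_j\|B_je_{\bf n}\|_{H'}^2$, the weighted AM--GM inequality gives, for every ${\bf n}\in\overline{M}$,
\[
	\frac{\prod_{j=0}^m\|B_je_{\bf n}\|_{H'}^{2\lambda_j}}{\sum_{j=0}^m h_j\|B_je_{\bf n}\|_{H'}^2}\leqslant\frac{\prod_{j=0}^m\lambda_j^{\lambda_j}}{\prod_{j=0}^m h_j^{\lambda_j}},
\]
whence $\prod_{j=0}^m h_j^{\lambda_j}\cdot\|Ae_{\bf n}\|_{H'}^2/b_{{\bf n},{\bf h}}\leqslant\prod_{j=0}^m\lambda_j^{\lambda_j}$. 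Taking first the supremum over ${\bf n}$ and then over ${\bf h}$ delivers $\mathfrak{C}\leqslant\prod_{j=0}^m\lambda_j^{\lambda_j}<\infty$, so Theorem~\ref{HLP_for_special_multipliers_abst} applies and gives, for every $x\in\mathcal{D}(A)\cap H_{\bf B}$,
\[
	\|Ax\|_{H'}\leqslant\sqrt{\mathfrak{C}\cdot\prod_{j=0}^m\lambda_j^{-\lambda_j}}\cdot\prod_{j=0}^m\|B_jx\|_{H'}^{\lambda_j}\leqslant\prod_{j=0}^m\|B_jx\|_{H'}^{\lambda_j},
\]
which is~\eqref{HLP_final_multipliers}.

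I expect the main obstacle to be the careful treatment of the degenerate indices ${\bf n}\notin\overline{M}$ together with the convention $0^0=1$. One must check that whenever some $\|B_je_{\bf n}\|_{H'}=0$ the pointwise identity still yields $\|Ae_{\bf n}\|_{H'}=0$, so that no index with vanishing denominator enters the $\widetilde{\sup}$ and $\mathfrak{C}$ stays finite, and that the AM--GM step remains correct when a factor $\|B_je_{\bf n}\|_{H'}$ vanishes (both sides being $0$ since all $\lambda_j>0$). Because the norms in~\eqref{multiplers_special_1} and~\eqref{multipliers_special_2} are finite, at any coordinate with $g_{n_i}=0$ the exponents $k_i$ and $r_i^j$ must be non-negative, and a short case analysis on the sign of $k_i$ — using $k_i=\sum_j\lambda_j r_i^j$ with $\lambda_j>0$ — confirms the identity in every degenerate situation.
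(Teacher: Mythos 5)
Your proposal is correct and follows essentially the same route as the paper's proof: verify the two missing hypotheses (A2) and (B3), bound the constant $\mathfrak{C}$ in~\eqref{condition_constant_new_HLP_abst} by $\prod_{j=0}^m\lambda_j^{\lambda_j}$ via the weighted AM--GM inequality, invoke Theorem~\ref{HLP_for_special_multipliers_abst}, and check equality directly on the basis elements $e_{\bf n}$ with $g_{\bf n}^{\bf k}\ne 0$. If anything, you are slightly more careful than the paper: you apply AM--GM directly to the discrete ratio rather than passing through the paper's intermediate continuous supremum over ${\bf x}$, and you make explicit the $0^0=1$ degenerate-coordinate case analysis (showing $g_{\bf n}^{\bf k}\ne 0$ forces $b_{{\bf n},{\bf h}}\ne 0$, so no index produces $+\infty$ in the modified supremum), which the paper leaves implicit.
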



\begin{proof}
	Since $g_{{\bf n}_0}^{\bf k}\ne 0$, by the weighted AM-GM inequality~\cite[pp.~74-75]{Cve_12} we conclude that operator $A$ satisfies condition~(A3) and operators $B_0,\ldots,B_m$ satisfy condition~(B3), as 
	\[
		g_{{\bf n}_0}^{2\bf k} = \prod\limits_{j=0}^m g_{{\bf n}_0}^{2\lambda_j{\bf r}^j} \leqslant \sum\limits_{j=0}^m \lambda_j g_{{\bf n}_0}^{2{\bf r}^j} \leqslant \sum\limits_{j=0}^m g_{{\bf n}_0}^{2{\bf r}^j}.
	\]
	Let us show that $\mathfrak{C} \leqslant \prod\limits_{j=0}^m \lambda_j^{\lambda_j}$. Indeed, for every ${\bf h}\in \RR_+^{m+1}$, applying the weighted AM-GM inequality we obtain
	\[
	    \begin{array}{rcl}
		    \displaystyle\widetilde{\sup\limits_{{\bf n}\in M}} \frac{\left\|Ae_{{\bf n}}\right\|_{H'}^2}{b_{{\bf n},\bf h}
		    } &= & \displaystyle \widetilde{\sup\limits_{{\bf n}\in M}} \frac{g_{\bf n}^{2{\bf k}}}{\sum\limits_{j=0}^m h_j g_{\bf n}^{2{\bf r}^j}} \leqslant \sup\limits_{{\bf x}\in \RR_+^{m+1}} \frac{{\bf x}^{2{\bf k}}}{\sum\limits_{j=0}^m h_j {\bf x}^{2{\bf r}^j}} = \displaystyle \sup\limits_{{\bf x}\in \RR_+^{m+1}}\frac{\prod\limits_{j=0}^m{\bf x}^{\lambda_j 2{\bf r}^j}}{\sum\limits_{j=0}^m h_j {\bf x}^{2{\bf r}^j}} \\
		    &= &\displaystyle \sup\limits_{{\bf x}\in \RR_+^{m+1}}\frac{\prod\limits_{j=0}^m \left(\frac{\lambda_j}{h_j}\right)^{\lambda_j}\prod\limits_{j=0}^{m} \left(\frac{h_j {\bf x}^{2{\bf r}^j}}{\lambda_j}\right)^{\lambda_j}}{\sum\limits_{j=0}^m h_j {\bf x}^{2{\bf r}^j}} \leqslant \displaystyle \prod\limits_{j=0}^m \left(\frac{\lambda_j}{h_j}\right)^{\lambda_j}.
		\end{array}
	\]
	Hence, 
	\[
		\mathfrak{C} = \sup\limits_{{\bf h}\in \mathbb{R}_+^{m+1}} \prod\limits_{j=0}^mh_j^{\lambda_j} \widetilde{\sup\limits_{{\bf n}\in M}} \frac{\left\|Ae_{\bf n}\right\|_{H'}^2}{b_{{\bf n},{\bf h}}} \leqslant\sup\limits_{{\bf h}\in \mathbb{R}_+^{m+1}} \prod\limits_{j=0}^m \left(\frac{\lambda_j}{h_j}\right)^{\lambda_j}\cdot \prod\limits_{j=0}^mh_j^{\lambda_j} \leqslant \prod\limits_{j=0}^m \lambda_j^{\lambda_j}.
	\]
	We conclude that conditions of Theorem~\ref{HLP_for_special_multipliers_abst} are satisfied and inequality~\eqref{HLP_final_multipliers} holds true. Straightforward calculations show that inequality~\eqref{HLP_final_multipliers} turns into equality on every $e_{\bf n}$, ${\bf n}\in M$, such that $g_{\bf n}^{\bf k}\ne 0$. 
\end{proof}

\begin{rem}
	Applying similar arguments as in the proof of Theorem~\ref{HLP_for_special_multipliers}, we can prove finiteness of $\mathfrak{C}$ in~\eqref{condition_constant_new_HLP_abst} in case there exist $C_1,C_2>0$, ${\bf k},{\bf r}^0,\ldots,{\bf r}^m\in\mathbb{R}^d$ and $\lambda_0,\ldots,\lambda_m>0$ such that $\lambda_0+\ldots+\lambda_m=1$, $\lambda_0{\bf r}^0 + \ldots + \lambda_m {\bf r}^m = {\bf k}\in \mathcal{S}\left({\bf r}^0,\ldots,{\bf r}^m\right)$ and, for ${\bf n}\in M$, 
	\[
		\|Ae_{\bf n}\|_{H'} \leqslant C_1g_{\bf n}^{\bf k}\quad\textrm{and}\quad\left\|B_je_{\bf n}\right\|_{H'}\geqslant C_2 g_{\bf n}^{{\bf r}^j},\quad j=0,1,\ldots,m.
	\]
\end{rem} 

\section{Applications}
\label{Sec:Applications}

This section is devoted to some applications of main results. First, we consider the problem of the best approximation of one class by elements from another class. Then we demonstrate how to obtain sharp multiplicative Hardy-Littlewood-P\'olya and Taikov types inequalities. More specifically, we will consider the case of the Laplace-Beltrami operator acting in $L_2(\mathcal{M})$, where $\mathcal{M}$ is $C^\infty$ Riemannian manifold without boundary, and the case of differential operators acting in $H = L_2(\mathbb{R}^d)$. 

Our considerations could be also applied to establish mean-squared versions of these inequalities. Moreover, using similar ideas we can obtain some other mean-squared and multiplicative inequalities, for example, inequalities established in~\cite{Raf_83,BerRaf_85,BabRas_00,BabKozSko_20}. 



\subsection{Approximation of a class by another class}

In~\cite{BabKofPic_97} (see also Theorem~7.4.1 in~\cite{BKKP}) there was established a general result on the equivalence of inequalities for the norms derivatives and some classical problems in Approximation Theory, in particular, the problem of the best approximation of one class by elements from another class. We apply this result to obtain direct consequences from Theorem~\ref{HLP_for_special_multipliers_abst}.

Let $H'=H$ and $B_0=\text{id}_H$. Consider the semi-norm $p(\cdot) = \|\cdot\|_{H}$ on $H$, and classes 
\[
    M_j := \left\{B_j^*z\,:\,\|z\|_{H}\leqslant 1, z\in\mathcal{D}\left(B_j^*\right)\right\},\qquad j=1,\ldots,m,
\]
\[
    M := \left\{A^*z\,:\,\|z\|_{H}\leqslant 1, z\in\mathcal{D}(A^*)\right\}\cap H_{\bf B}.
\]
Clearly, for every admissible $x\in H$, $S_{M_j}(x) := \sup\left\{(x,y)_{H}\,:\,y\in M_j\right\} = \left\|B_jx\right\|_{H}$ and $S_M(x) := \sup\left\{(x,y)_{H}\,:\,y\in M\right\} = \|Ax\|_{H}$.

Define the function $\Phi:\mathbb{R}_+^m\to\mathbb{R}$ as follows $\Phi({\bf t}) = K\prod\limits_{j=1}^m t_j^{\lambda_j}$, where $K > 0$, $\lambda_j > 0$ and $\lambda_0 := 1-\sum\limits_{j=1}^m \lambda_j > 0$, and set \[
    \overline{\Phi}({\bf t}) := \left\{
        \begin{array}{ll}
            -\Phi({\bf t}), & {\bf t}\in\mathbb{R}_+^m,\\
            +\infty, & {\bf t}\in\mathbb{R}^m\setminus\mathbb{R}_+^m.
        \end{array}
    \right.
\]
For the Legendre transformation of $\overline{\Phi}$ we have
\[
    \overline{\Phi}^*(-{\bf s}) := \sup\limits_{{\bf t}\in\mathbb{R}^m}\left\{-\sum\limits_{j=1}^m t_js_j - \overline{\Phi}({\bf t})\right\} = \left\{
    \begin{array}{ll}
        \lambda_0 K^{\frac{1}{\lambda_0}}\prod\limits_{j=1}^m \left(\frac{\lambda_j}{s_j}\right)^{\frac{\lambda_j}{\lambda_0}},& {\bf s}\!\in\!\mathbb{R}_+^m,\\
        +\infty, & {\bf  s} \!\in\!\mathbb{R}^m\!\setminus\!\mathbb{R}^m_+.
    \end{array}
    \right.
\]
Taking into account sharpness of inequality~\eqref{HLP_final_multipliers_abst} we obtain the following. 

\begin{cor}
    Under conditions of Theorem~\ref{HLP_for_special_multipliers_abst}, for every ${\bf t}\in\mathbb{R}_+^m$,
    \[
        E\left(M, \sum\limits_{j=1}^m t_jM_j\right)_H \!:=\! \sup\limits_{x\in M}\inf\limits_{u\in \sum\limits_{j=1}^mt_jM_j}\|x - u\|_H \!=\! \overline{\Phi}^*(-{\bf t}) \!=\! \lambda_0 K^{\frac{1}{\lambda_0}} \prod\limits_{j=1}^m \left(\frac{\lambda_j}{t_j}\right)^{\frac{\lambda_j}{\lambda_0}},
    \]
    where
    \[
        K = \sqrt{\mathfrak{C}\cdot\prod\limits_{j=0}^m\lambda_j^{-\lambda_j}}.
    \]
\end{cor}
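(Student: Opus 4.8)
The plan is to recognize this corollary as an instance of the abstract duality between sharp multiplicative inequalities and best-approximation-of-one-class-by-another results, referenced from \cite{BabKofPic_97} (Theorem 7.4.1 in \cite{BKKP}). The sharp inequality \eqref{HLP_final_multipliers_abst} reads $\|Ax\|_H \le K \prod_{j=1}^m \|B_j x\|_H^{\lambda_j}$ (absorbing the $B_0 = \mathrm{id}_H$ factor $\|x\|_H^{\lambda_0}$ into the convention, or keeping it as the homogenizing term), and the task is to convert this into a formula for the deviation $E(M, \sum_{j=1}^m t_j M_j)_H$. First I would record the key identification already supplied in the setup: for admissible $x$, the support functions satisfy $S_{M_j}(x) = \|B_j x\|_H$ and $S_M(x) = \|A x\|_H$. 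This is what ties the geometric approximation quantity to the analytic norms appearing in the multiplicative inequality.

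\medskip

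\emph{Second}, I would invoke the standard support-function characterization of deviation between convex symmetric sets: for the class $M$ and the dilated Minkowski sum $\sum_{j=1}^m t_j M_j$, the deviation equals
\[
    E\Bigl(M, \sum_{j=1}^m t_j M_j\Bigr)_H = \sup_{x}\Bigl( S_M(x) - \sum_{j=1}^m t_j S_{M_j}(x)\Bigr),
\]
where the supremum runs over $x$ with $p(x) = \|x\|_H \le 1$ (the normalization from the seminorm $p$). Substituting the support-function identities turns the right-hand side into
\[
    \sup_{\|x\|_H \le 1}\Bigl( \|Ax\|_H - \sum_{j=1}^m t_j \|B_j x\|_H\Bigr).
\]
The sharp inequality now enters: using $\|Ax\|_H \le K \prod_{j=1}^m \|B_j x\|_H^{\lambda_j}$ together with the constraint $\|x\|_H = \|B_0 x\|_H \le 1$, I would reduce the supremum to an extremal problem over the vector ${\bf s} = (\|B_1 x\|_H, \dots, \|B_m x\|_H) \in \mathbb{R}_+^m$, namely maximizing $K\prod_j s_j^{\lambda_j} - \sum_j t_j s_j$. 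Because the inequality is \emph{sharp}, the attainable values of ${\bf s}$ fill out enough of $\mathbb{R}_+^m$ (subject to the normalization) that the supremum over $x$ equals the supremum over ${\bf s}$, so no range restriction is lost.

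\medskip

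\emph{Third}, I would recognize that last supremum as exactly a Legendre-Fenchel transform. The function $\Phi({\bf s}) = K\prod_{j=1}^m s_j^{\lambda_j}$ (extended to $\overline{\Phi}$ by $+\infty$ off $\mathbb{R}_+^m$) has its Legendre transform $\overline{\Phi}^*(-{\bf t})$ computed explicitly in the excerpt to be $\lambda_0 K^{1/\lambda_0}\prod_{j=1}^m (\lambda_j/t_j)^{\lambda_j/\lambda_0}$, which is precisely the claimed right-hand side. Thus
\[
    E\Bigl(M, \sum_{j=1}^m t_j M_j\Bigr)_H = \sup_{{\bf s}\in\mathbb{R}_+^m}\Bigl(\Phi({\bf s}) - \sum_{j=1}^m t_j s_j\Bigr) = \overline{\Phi}^*(-{\bf t}),
\]
and plugging in the stated $K = \sqrt{\mathfrak{C}\cdot\prod_{j=0}^m \lambda_j^{-\lambda_j}}$ finishes the identity.

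\medskip

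\emph{The main obstacle} I anticipate is justifying that the supremum over admissible $x$ genuinely realizes the full Legendre transform rather than a smaller quantity — i.e., that the normalization $\|x\|_H \le 1$ combined with the homogeneity structure does not prevent the ratio vector ${\bf s}$ from ranging over all of $\mathbb{R}_+^m$ needed to attain the transform. This is where sharpness of \eqref{HLP_final_multipliers_abst} is indispensable: the extremal elements $e_{\bf n}$ (on which \eqref{HLP_final_multipliers} becomes equality in Theorem~\ref{HLP_for_special_multipliers}, and the limiting extremal sequences in the abstract Theorem~\ref{HLP_for_special_multipliers_abst}) supply a family whose support-function values approximate any prescribed direction in ${\bf s}$-space, forcing the lower bound to match the upper bound coming from the inequality. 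I would lean on the general equivalence theorem of \cite{BabKofPic_97} to package this duality cleanly rather than re-derive the attainment by hand; the only genuinely new input is the explicit sharp constant $K$, which is handed to us by Theorem~\ref{HLP_for_special_multipliers_abst}.
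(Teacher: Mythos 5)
Your proposal is correct and takes essentially the same route as the paper: the paper obtains this corollary exactly by combining the support-function identities $S_{M_j}(x)=\left\|B_jx\right\|_{H}$ and $S_M(x)=\left\|Ax\right\|_{H}$ with the equivalence theorem of~\cite{BabKofPic_97} (Theorem~7.4.1 in~\cite{BKKP}), the sharpness of inequality~\eqref{HLP_final_multipliers_abst}, and the Legendre transform $\overline{\Phi}^*(-{\bf t})$ computed in the text. Your closing concern --- that the extremal family must realize all needed directions of ${\bf s}=\left(\left\|B_1x\right\|_{H},\ldots,\left\|B_mx\right\|_{H}\right)$, which is guaranteed by the ${\bf h}$-parametrized extremal sequences behind Theorem~\ref{HLP_for_special_multipliers_abst} --- is precisely the content packaged by the cited equivalence theorem, so nothing is missing.
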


\subsection{Inequalities for powers of the Laplace-Beltrami operators on compact Riemannian manifolds}

Following~\cite{Besse} we recall some basic notations and facts from Harmonic Analysis on compact Riemannian manifolds. Let $(\mathcal{M}, g)$ be a $d$-dimensional $C^\infty$ compact Riemannian manifold without boundary with volume element $\mu_g$. Given a local coordinate system $(y^i)$, let $(g_{ij})$ be the matrix with entries $g_{ij} = g\left(\frac{\partial}{\partial y^i}, \frac{\partial}{\partial y^j}\right)$, let $(g^{ij})$ denote the inverse matrix and put $\theta = \sqrt{\det{(g_{ij})}}$. Then the Laplace-Beltrami operator is locally given by
\[
	\Delta x = -\frac{1}{\theta} \sum\limits_{i,j=1}^d\frac{\partial}{\partial y^i}\left(\theta g^{ij}\frac{\partial x}{\partial y^{j}}\right),\qquad x\in C^\infty(\mathcal{M}).
\]
The operator $\Delta$ is formally self-adjoint and positive. It extends to a positive self-adjoint unbounded operator on
\[
	L_2(\mathcal{M}) = L_2(\mathcal{M},\mu_g) = \left\{x:\mathcal{M}\to\mathbb{C}\,:\,\|x\|_{L_2(\mathcal{M})}^2 \!:=\! \int_{\mathcal{M}}|x(t)|^2\,{\rm d}\mu_g(t) < \infty\right\}\!,
\]
and its extension, also denoted by $\Delta$, has a discrete spectrum $0=\mu_0^2<\mu_1^2\leqslant\mu_2^2\leqslant\ldots$ without accumulations, where each eigenvalue occurs as many times as its multiplicity. We denote by $\left\{\varphi_j\right\}_{j=0}^\infty$ an $L_2$-orthonormal basis of $C^\infty$-real eigenfunctions associated with the $\mu_j$'s. 

Given any $x\in L_2(\mathcal{M},\mu_g)$, we can write $x = \sum\limits_{j=0}^\infty a_j\varphi_j$ (with equality in $L^2$ sense), where $a_j = \left(x, \varphi_j\right)_{L_2(\mathcal{M})} = \int_{\mathcal{M}} x(t)\varphi_j(t)\,{\rm d}\mu_g(t)$. For $s\in\mathbb{R}$, we define the Sobolev space $H^s(\mathcal{M})$ as follows:
\[
	H^s(\mathcal{M}) := \left\{x:\mathcal{M}\to \mathbb{C}\,:\,\|x\|_{H^s(\mathcal{M})}^2 := \sum\limits_{j=1}^\infty |a_j|^2\mu_j^{2s} < \infty\right\},
\]
and set $H^s_0(\mathcal{M}) := \{x\in H^s(\mathcal{M})\,:\,a_0 = 0\}$.

Define fractional powers of $\Delta$. For $x\in H^{2s}_0(\mathcal{M})$, we set
\[
	\Delta^{s}x := \sum\limits_{j=1}^\infty a_j \mu_j^{2s} \varphi_j,
\] 
Clearly, $\Delta^s$ is self-adjoint positive and, hence, closed operator (see~\cite[\S7.3]{Yos_95}).

\subsubsection{Hardy-Littlewood-P\'olya type inequalities on compact Riemannian manifolds}

Since operator $\Delta^s : H_0^{2s}(\mathcal{M})\to H^0_0(\mathcal{M})$ satisfies conditions~(B1), (B2), (B4), (A1), (A3), we obtain the following corollary from Theorem~\ref{HLP_for_special_multipliers}.

\begin{theorem}
\label{HLP_Riemannian}
	Let $\mathcal{M}$ be a $d$-dimensional $C^\infty$ compact Riemannian manifold without boundary, $m\in\mathbb{Z}_+$, $r^0<\ldots< r^m\in \mathbb{R}$, $k\in \mathbb{R}$ and $\lambda_0,\ldots,\lambda_m > 0$ be such that $\lambda_0 + \ldots + \lambda_m = 1$ and $k = \lambda_0 r^0 + \ldots + \lambda_m r^m$. Then, for every $x\in H_0^{2r^m}(\mathcal{M})$, there holds true sharp inequality:
	\[
		\left\|\Delta^kx\right\|_{L_2(\mathcal{M})} \leqslant \prod\limits_{j=0}^m \left\|\Delta^{r^j}x\right\|_{L_2(\mathcal{M})}^{\lambda_j}.
	\]
	Any eigenfunction $\varphi_1,\varphi_2,\ldots$ turns above inequality into equality.
\end{theorem}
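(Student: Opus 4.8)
The plan is to obtain Theorem~\ref{HLP_Riemannian} as the $d=1$ specialization of Theorem~\ref{HLP_for_special_multipliers}. First I would fix the ambient Hilbert space $H=H'$ to be the orthogonal complement of the constants in $L_2(\mathcal{M})$, i.e.\ $H^0_0(\mathcal{M})$, equipped with the orthonormal basis $\{\varphi_n\}_{n\geqslant 1}$ of eigenfunctions and index set $M=\NN$. Then I would set $A:=\Delta^k$ and $B_j:=\Delta^{r^j}$, $j=0,\ldots,m$, take $g_n:=\mu_n^2$ (the $n$-th eigenvalue, strictly positive on $H^0_0(\mathcal{M})$), and read the multi-indices of Theorem~\ref{HLP_for_special_multipliers} as the scalars ${\bf k}=k$ and ${\bf r}^j=r^j$. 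With this dictionary the representations \eqref{multiplers_special_1} and \eqref{multipliers_special_2} become the identities $\|\Delta^k\varphi_n\|_{L_2(\mathcal{M})}=\mu_n^{2k}=g_n^{k}$ and $\|\Delta^{r^j}\varphi_n\|_{L_2(\mathcal{M})}=\mu_n^{2r^j}=g_n^{r^j}$.

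Next I would verify the hypotheses. Since every $\varphi_n$ is a smooth eigenfunction with $\Delta^s\varphi_n=\mu_n^{2s}\varphi_n$, one has $\varphi_n\in\mathcal{D}(\Delta^s)=H^{2s}_0(\mathcal{M})$ for all $s\in\RR$, which gives (B1) and (A1); self-adjointness and positivity of $\Delta^s$ make it closed, yielding (B4) and the closability (A3). Orthonormality of the $\varphi_n$ gives $(\Delta^{r^j}\varphi_{n'},\Delta^{r^j}\varphi_{n''})_{L_2(\mathcal{M})}=\mu_{n'}^{2r^j}\mu_{n''}^{2r^j}(\varphi_{n'},\varphi_{n''})_{L_2(\mathcal{M})}=0$ for $n'\neq n''$, hence (B2), and the same computation for $A=\Delta^k$ gives \eqref{multipliers_conditions_1}. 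Because all $\mu_n>0$ we have $g_n^{k}>0$ for every $n$, so the nondegeneracy requirement $g_{n_0}^{k}\neq 0$ is met; and since $\lambda_0,\ldots,\lambda_m>0$ sum to $1$ and $r^0<\ldots<r^m$, the hypothesis $k=\lambda_0r^0+\ldots+\lambda_mr^m$ automatically places $k$ in the convex hull $\mathcal{S}(r^0,\ldots,r^m)=[r^0,r^m]$. All assumptions of Theorem~\ref{HLP_for_special_multipliers} are thus in force, and \eqref{HLP_final_multipliers} is exactly the asserted inequality on $\mathcal{D}(A)\cap H_{\bf B}$.

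The final step is to identify this domain with $H_0^{2r^m}(\mathcal{M})$ and record the equality cases. Here $H_j=\{x:\sum_n|a_n|^2\mu_n^{4r^j}<\infty\}=H_0^{2r^j}(\mathcal{M})$; since the eigenvalues of a compact manifold satisfy $\mu_n^2\to\infty$, the Sobolev spaces are nested, and using $r^0\leqslant k\leqslant r^m$ one gets $H^{2r^m}_0\subseteq H^{2r^j}_0\subseteq H^{2k}_0$, so that $\mathcal{D}(A)\cap H_{\bf B}=\bigcap_{j}H^{2r^j}_0=H^{2r^m}_0(\mathcal{M})$, as claimed. Equality on each $\varphi_n$ is the final assertion of Theorem~\ref{HLP_for_special_multipliers} applied with $g_n^{k}\neq 0$, and follows directly from $\mu_n^{2k}=\mu_n^{2\sum_j\lambda_j r^j}=\prod_j(\mu_n^{2r^j})^{\lambda_j}$. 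The only point that requires genuine care is the treatment of the zero eigenvalue: by passing to $H^0_0(\mathcal{M})$ and discarding the constant eigenfunction $\varphi_0$ I keep every $g_n=\mu_n^2$ strictly positive, which is precisely what prevents the negative powers $\Delta^{r^j}$ with $r^j<0$, as well as the convention $0^0$, from producing a degenerate weight; apart from this normalization and the Sobolev nesting, the argument is a direct substitution into Theorem~\ref{HLP_for_special_multipliers}.
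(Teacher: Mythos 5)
Your proposal is correct and follows exactly the paper's route: the paper derives Theorem~\ref{HLP_Riemannian} as an immediate corollary of Theorem~\ref{HLP_for_special_multipliers} by observing that $\Delta^s:H_0^{2s}(\mathcal{M})\to H_0^0(\mathcal{M})$ satisfies (B1), (B2), (B4), (A1), (A3), which is precisely your instantiation $A=\Delta^k$, $B_j=\Delta^{r^j}$, $g_n=\mu_n^2$ on the zero-mean subspace. You simply make explicit the verifications (orthogonality, closedness, the convex-hull condition, the domain identification $\mathcal{D}(A)\cap H_{\bf B}=H_0^{2r^m}(\mathcal{M})$ via the Sobolev nesting) that the paper leaves implicit, and all of these check out.
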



In case $\mathcal{M} = \mathbb{T}$ is period of length $2\pi$, we have that $\Delta = -D^2$. Denote by $D^s$, $s\in\mathbb{R}$, the Weyl fractional derivative of periodic function (see~\cite[\S19]{SamKilMar}). Evidently, 
\[
	\left\|D^sx\right\|_{L_2(\mathbb{T})} = \left\|\Delta^{\frac{s}{2}}x\right\|_{L_2(\mathbb{T})}.
\]
As a result, we obtain the following consequence from Theorem~\ref{HLP_Riemannian}.

\begin{cor}
\label{Cor_1}
	Let $m\in\mathbb{Z}_+$, $r^0<\ldots< r^m\in \mathbb{R}$, $k\in \mathbb{R}$ and $\lambda_0,\ldots,\lambda_m > 0$ be such that $\lambda_0 + \ldots + \lambda_m = 1$ and $k = \lambda_0 r^0 + \ldots + \lambda_m r^m$. Then, for every $x\in H^{r^m}_0(\mathbb{T})$, there holds true sharp inequality:
	\[
		\left\|D^kx\right\|_{L_2(\mathbb{T})} \leqslant \prod\limits_{j=0}^m \left\|D^{r^j}x\right\|_{L_2(\mathbb{T})}^{\lambda_j}.
	\]
	Any function $e^{int}$, $n\in\mathbb{Z}\setminus\{0\}$ and $t\in\mathbb{T}$, turns above inequality into equality.
\end{cor}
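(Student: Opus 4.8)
The plan is to deduce Corollary~\ref{Cor_1} directly from Theorem~\ref{HLP_Riemannian} by specializing the manifold to the one-dimensional torus $\mathcal{M} = \mathbb{T}$ and translating between powers of the Laplace-Beltrami operator and Weyl fractional derivatives. The key observation is that on $\mathbb{T}$ one has $\Delta = -D^2$, whose eigenfunctions are the exponentials $e^{int}$, $n\in\mathbb{Z}$, with eigenvalues $n^2$; consequently $\mu_n = |n|$ and $\Delta^{s/2} e^{int} = |n|^s e^{int}$. Combined with the identity $\|D^s x\|_{L_2(\mathbb{T})} = \|\Delta^{s/2} x\|_{L_2(\mathbb{T})}$ recorded just before the corollary, this lets me rewrite every Weyl derivative norm as the norm of a half-order power of $\Delta$, and thereby pull the statement back to a setting in which Theorem~\ref{HLP_Riemannian} applies verbatim.

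First I would set $\tilde{r}^j := r^j/2$ for $j = 0,\ldots,m$ and $\tilde{k} := k/2$, and check that scaling the hypotheses by the factor $1/2$ preserves them. The strict ordering $\tilde{r}^0 < \ldots < \tilde{r}^m$ follows from $r^0 < \ldots < r^m$, the weights $\lambda_0,\ldots,\lambda_m > 0$ summing to $1$ are untouched, and the relation $k = \sum_{j=0}^m \lambda_j r^j$ becomes $\tilde{k} = \sum_{j=0}^m \lambda_j \tilde{r}^j$. Theorem~\ref{HLP_Riemannian} then applies with these halved exponents and yields, for every $x \in H_0^{2\tilde{r}^m}(\mathbb{T}) = H_0^{r^m}(\mathbb{T})$,
\[
    \left\|\Delta^{\tilde{k}} x\right\|_{L_2(\mathbb{T})} \leqslant \prod_{j=0}^m \left\|\Delta^{\tilde{r}^j} x\right\|_{L_2(\mathbb{T})}^{\lambda_j}.
\]
Substituting $\tilde{k} = k/2$ and $\tilde{r}^j = r^j/2$ and invoking $\|D^s x\|_{L_2(\mathbb{T})} = \|\Delta^{s/2} x\|_{L_2(\mathbb{T})}$ on both sides converts this immediately into the asserted inequality for the Weyl derivatives $D^k$ and $D^{r^j}$.

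For sharpness I would verify that the exponentials $e^{int}$, $n\neq 0$, realize equality, which is consistent with the extremizer statement of Theorem~\ref{HLP_Riemannian} (where the eigenfunctions $\varphi_j$ turn the inequality into an equality). Since $\|D^s e^{int}\|_{L_2(\mathbb{T})} = |n|^s \cdot \|e^{int}\|_{L_2(\mathbb{T})}$, the left-hand side equals $|n|^k \|e^{int}\|_{L_2(\mathbb{T})}$ while the right-hand side equals $|n|^{\sum_j \lambda_j r^j} \|e^{int}\|_{L_2(\mathbb{T})} = |n|^k \|e^{int}\|_{L_2(\mathbb{T})}$, using $k = \sum_j \lambda_j r^j$ together with $\sum_j \lambda_j = 1$. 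There is no genuine obstacle here; the only points that demand minor care are the bookkeeping of the factor $1/2$ relating the Weyl order $s$ to the power $s/2$ of $\Delta$, the identification of the abstract eigenbasis $\{\varphi_j\}$ with the concrete exponentials $\{e^{int}\}_{n\neq 0}$ (each nonzero eigenvalue $n^2$ carrying multiplicity two), and the exclusion of the constant mode encoded in the subscript $0$ of $H_0^{r^m}(\mathbb{T})$.
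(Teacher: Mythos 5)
Your proposal is correct and takes essentially the same route as the paper: the authors likewise obtain Corollary~\ref{Cor_1} as an immediate consequence of Theorem~\ref{HLP_Riemannian} with $\mathcal{M}=\mathbb{T}$, using $\Delta=-D^2$ and the identity $\left\|D^sx\right\|_{L_2(\mathbb{T})}=\left\|\Delta^{\frac{s}{2}}x\right\|_{L_2(\mathbb{T})}$, which is exactly your halving of the exponents $k$ and $r^j$. Your direct computation of equality on the exponentials $e^{int}$, $n\ne 0$, matches the paper's extremizer statement (eigenfunctions of $\Delta$), so nothing is missing.
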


In case $m=1$, $r^0=0$ and $\lambda_1 = \frac{k}{r}$ statement of Corollary~\ref{Cor_1} is classical Hardy-Littlewood-P\'olya inequality~\cite{HarLitPol_34}.


Next, we consider multivariate analogues of Theorem~\ref{HLP_Riemannian}. Let $a\in\mathbb{N}$ and $\mathcal{M}^a = \mathcal{M}\times\ldots\times\mathcal{M}$ be tensor product of $a$ manifolds $\mathcal{M}$. For ${\bf s} = \left(s_1,\ldots,s_a\right)\in\mathbb{R}^a$, let $\mathbf{\Delta}^{\bf s} := \Delta_1^{s_1}\ldots\Delta_a^{s_a}$, where $\Delta_j$ is the Laplace-Beltrami operator acting on the $j$-th coordinate. Also, set $H_0^{\bf s}(\mathcal{M}^a) := H_0^{s_1}(\mathcal{M})\times\ldots\times H_0^{s_a}(\mathcal{M})$. From Theorem~\ref{HLP_for_special_multipliers} we can establish the following corollary.

\begin{theorem}
\label{HLP_Riemannian_mvt}
	Let $\mathcal{M}$ be a $d$-dimensional $C^\infty$ compact Riemannian manifold without boundary, $a\in \mathbb{N}$, $m\in\mathbb{Z}_+$, ${\bf k},{\bf r}^0,\ldots, {\bf r}^m\in \mathbb{R}^a$ be such that ${\bf k} \in\mathcal{S}\left({\bf r}^0, \ldots,{\bf r}^m\right)$. Also, let $\lambda_0,\ldots,\lambda_m > 0$ be such that $\lambda_0 + \ldots + \lambda_m = 1$ and ${\bf k} = \lambda_0 {\bf r}^0 + \ldots + \lambda_m {\bf r}^m$. Then, for every $x\in \bigcap\limits_{j=0}^m H^{2{\bf r}^j}_0(\mathcal{M}^a)$, 
	\[
		\left\|\mathbf{\Delta}^{\bf k}x\right\|_{L_2(\mathcal{M}^a)} \leqslant \prod\limits_{j=0}^m \left\|\mathbf{\Delta}^{{\bf r}^j}x\right\|_{L_2(\mathcal{M}^a)}^{\lambda_j}.
	\]
	Above inequality is sharp and turns into equality on each function $\varphi_{j_1}\times\ldots\times\varphi_{j_a}$ with $j_1\cdot\ldots\cdot j_a\ne 0$.
\end{theorem}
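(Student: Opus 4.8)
The plan is to obtain the statement as a direct specialization of Theorem~\ref{HLP_for_special_multipliers}, applied with its ambient index dimension equal to the number of tensor factors $a$ (in place of the manifold dimension $d$), and with the abstract index set $M\subset\mathbb{Z}^d$ realized as $\mathbb{N}^a\subset\mathbb{Z}^a$. First I would fix $H=H'=L_2(\mathcal{M}^a)$ with its orthonormal basis of product eigenfunctions $\varphi_{\bf j}:=\varphi_{j_1}\otimes\cdots\otimes\varphi_{j_a}$, ${\bf j}=(j_1,\ldots,j_a)$, which is exactly the basis written $\varphi_{j_1}\times\cdots\times\varphi_{j_a}$ in the statement. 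Since each factor $H_0^{s_i}(\mathcal{M})$ discards the $\varphi_0$-mode, every element of $\bigcap_{j=0}^m H_0^{2{\bf r}^j}(\mathcal{M}^a)$ is supported on indices with all $j_i\geqslant 1$, so the effective index set is $M=\mathbb{N}^a$. Taking $A:=\mathbf{\Delta}^{\bf k}$ and $B_j:=\mathbf{\Delta}^{{\bf r}^j}$, the diagonal action $\mathbf{\Delta}^{\bf s}\varphi_{\bf j}=\bigl(\prod_{i=1}^a\mu_{j_i}^{2s_i}\bigr)\varphi_{\bf j}$ gives $\|\mathbf{\Delta}^{\bf s}\varphi_{\bf j}\|_{L_2(\mathcal{M}^a)}=\prod_{i=1}^a\mu_{j_i}^{2s_i}$; choosing the scalar sequence $g_n:=\mu_n^2$ ($n\geqslant 1$) this reads $\|A\varphi_{\bf n}\|_{H'}=g_{\bf n}^{\bf k}$ and $\|B_j\varphi_{\bf n}\|_{H'}=g_{\bf n}^{{\bf r}^j}$, matching~\eqref{multiplers_special_1} and~\eqref{multipliers_special_2} verbatim.

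Next I would check the hypotheses of Theorem~\ref{HLP_for_special_multipliers} one at a time. Conditions~(B1) and~(A1) hold because every $\varphi_{\bf j}$ lies in the domain of each power of $\mathbf{\Delta}$; conditions~(B2) and~\eqref{multipliers_conditions_1} are immediate from the fact that $\mathbf{\Delta}^{\bf s}$ acts diagonally on the orthonormal system $\{\varphi_{\bf j}\}$, so distinct basis images stay orthogonal. The identity ${\bf k}=\sum_{j}\lambda_j{\bf r}^j$ with $\lambda_j>0$, $\sum_j\lambda_j=1$, places ${\bf k}$ in $\mathcal{S}({\bf r}^0,\ldots,{\bf r}^m)$ as required, and ${\bf n}_0=(1,\ldots,1)$ yields $g_{{\bf n}_0}^{\bf k}=\prod_i\mu_1^{2k_i}\neq 0$. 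I would also record that the defining condition $\sum_{\bf n}|x_{\bf n}|^2\|B_j\varphi_{\bf n}\|_{H'}^2<\infty$ of the space $H_j$ reads $\sum_{\bf j}|a_{\bf j}|^2\prod_i\mu_{j_i}^{4r^j_i}<\infty$, i.e.\ membership in $H_0^{2{\bf r}^j}(\mathcal{M}^a)$; hence $H_{\bf B}=\bigcap_{j=0}^m H_0^{2{\bf r}^j}(\mathcal{M}^a)$, and the weighted AM--GM bound $\prod_i\mu_{n_i}^{4k_i}\leqslant\sum_j\lambda_j\prod_i\mu_{n_i}^{4r^j_i}$ shows this intersection is contained in $\mathcal{D}(\mathbf{\Delta}^{\bf k})=\mathcal{D}(A)$. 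Thus the corollary's domain coincides with $\mathcal{D}(A)\cap H_{\bf B}$.

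With all hypotheses in place, Theorem~\ref{HLP_for_special_multipliers} delivers the inequality. Its equality clause, asserting equality on every $e_{\bf n}$ with $g_{\bf n}^{\bf k}\neq 0$, translates directly: $g_{\bf n}^{\bf k}=\prod_i\mu_{n_i}^{2k_i}$ vanishes precisely when some $n_i=0$, so equality holds on the functions $\varphi_{j_1}\times\cdots\times\varphi_{j_a}$ with $j_1\cdots j_a\neq 0$, exactly as stated.

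The one genuinely technical point is condition~(B4) (and closability in~(A3)): I must argue that each product $\mathbf{\Delta}^{{\bf r}^j}=\Delta_1^{r^j_1}\cdots\Delta_a^{r^j_a}$ is a closed operator on $L_2(\mathcal{M}^a)$. The single-factor powers $\Delta_i^{s}$ are self-adjoint (as recalled in the discussion preceding Theorem~\ref{HLP_Riemannian}), and since they act on mutually independent tensor coordinates they commute and admit a joint spectral resolution; interpreting $\mathbf{\Delta}^{\bf s}$ through the joint functional calculus of the commuting family $\Delta_1,\ldots,\Delta_a$ exhibits it as self-adjoint on its natural maximal domain, hence closed. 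I expect pinning down this common domain and invoking the multivariate spectral theorem to be the main obstacle, while the remaining verifications reduce to routine bookkeeping against the abstract theorem.
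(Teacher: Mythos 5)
Your proposal matches the paper's own derivation: the paper obtains Theorem~\ref{HLP_Riemannian_mvt} exactly as a specialization of Theorem~\ref{HLP_for_special_multipliers} with index set $M=\mathbb{N}^a$ (the $H_0$-spaces killing the zero modes), $g_n=\mu_n^2$, $A=\mathbf{\Delta}^{\bf k}$, $B_j=\mathbf{\Delta}^{{\bf r}^j}$, and your verifications of~\eqref{multiplers_special_1}--\eqref{multipliers_special_2}, of the domain identification $H_{\bf B}=\bigcap\limits_{j=0}^m H_0^{2{\bf r}^j}(\mathcal{M}^a)\subset\mathcal{D}(A)$ via weighted AM--GM, and of the equality clause on the product eigenfunctions are all correct. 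The one point you flag as the main obstacle is in fact routine and needs no multivariate spectral theorem: $\mathbf{\Delta}^{\bf s}$ acts diagonally with real nonnegative multipliers on the orthonormal basis $\left\{\varphi_{j_1}\times\ldots\times\varphi_{j_a}\right\}$, and any such diagonal operator on its maximal domain is self-adjoint, hence closed, giving~(B4) and~(A3) immediately.
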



In case $\mathcal{M} = \mathbb{T}$, we denote ${\bf D}^{\bf s} := D^{s_1}_1\ldots D^{s_a}_a$. Then we can obtain the following consequence from Theorem~\ref{HLP_Riemannian_mvt}.

\begin{cor}
\label{Cor_2}
	Let $a\in \mathbb{N}$, $m\in\mathbb{Z}_+$, ${\bf k},{\bf r}^0,\ldots, {\bf r}^m\in \mathbb{R}^a$ be such that ${\bf k} \in\mathcal{S}\left({\bf r}^0, \ldots,{\bf r}^m\right)$. Also, let $\lambda_0,\ldots,\lambda_m > 0$ be such that $\lambda_0 + \ldots + \lambda_m = 1$ and ${\bf k} = \lambda_0 {\bf r}^0 + \ldots + \lambda_m {\bf r}^m$. Then, for every $x\in \bigcap\limits_{j=0}^m H^{{\bf r}^j}_0(\mathbb{T}^a)$, 
	\[
		\left\|\mathbf{D}^{\bf k}x\right\|_{L_2(\mathbb{T}^a)} \leqslant \prod\limits_{j=0}^m \left\|\mathbf{D}^{{\bf r}^j}x\right\|_{L_2(\mathbb{T}^a)}^{\lambda_j}.
	\]
	The above inequality is sharp and turns into equality on each function\break $e^{i(n_1t_1+\ldots+n_at_a)}$, where $n_1\cdot\ldots\cdot n_a\ne 0$ and $t_1,\ldots,t_d\in\mathbb{T}$.
\end{cor}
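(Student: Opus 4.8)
The plan is to deduce this corollary directly from Theorem~\ref{HLP_Riemannian_mvt} by specializing $\mathcal{M}=\mathbb{T}$ and exploiting the identification between the Weyl fractional derivative and a fractional power of the Laplace--Beltrami operator on the circle. On $\mathbb{T}$ one has $\Delta=-D^2$, the eigenfunctions are the exponentials $e^{int}$ with eigenvalues $n^2$, and the scalar identity $\|D^sx\|_{L_2(\mathbb{T})}=\|\Delta^{s/2}x\|_{L_2(\mathbb{T})}$ recorded just before Corollary~\ref{Cor_1} holds. First I would upgrade this identity to the tensor-product setting: acting coordinate-wise, $\mathbf{D}^{\bf s}=D_1^{s_1}\cdots D_a^{s_a}$ and $\mathbf{\Delta}^{\bf s}=\Delta_1^{s_1}\cdots\Delta_a^{s_a}$ both diagonalize in the orthonormal basis $\{e^{i(n_1t_1+\ldots+n_at_a)}\}$, with $\mathbf{D}^{\bf s}$ contributing the factor $|n_1|^{s_1}\cdots|n_a|^{s_a}$ and $\mathbf{\Delta}^{{\bf s}/2}$ contributing the identical factor. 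Hence, by Parseval's identity, $\|\mathbf{D}^{\bf s}x\|_{L_2(\mathbb{T}^a)}=\|\mathbf{\Delta}^{{\bf s}/2}x\|_{L_2(\mathbb{T}^a)}$ for every admissible $x$, where ${\bf s}/2:=(s_1/2,\ldots,s_a/2)$.

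Next I would apply Theorem~\ref{HLP_Riemannian_mvt} with the exponents ${\bf k}/2$ and ${\bf r}^0/2,\ldots,{\bf r}^m/2$ in place of ${\bf k}$ and ${\bf r}^0,\ldots,{\bf r}^m$. The hypotheses are preserved under the dilation by $1/2$: since a convex hull scales linearly, ${\bf k}\in\mathcal{S}({\bf r}^0,\ldots,{\bf r}^m)$ is equivalent to ${\bf k}/2\in\mathcal{S}({\bf r}^0/2,\ldots,{\bf r}^m/2)$, and the barycentric relation ${\bf k}=\sum_{j=0}^m\lambda_j{\bf r}^j$ is equivalent to ${\bf k}/2=\sum_{j=0}^m\lambda_j({\bf r}^j/2)$ with the same weights. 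The Sobolev domain also coincides, as $\bigcap_{j=0}^m H_0^{2({\bf r}^j/2)}(\mathbb{T}^a)=\bigcap_{j=0}^m H_0^{{\bf r}^j}(\mathbb{T}^a)$. Theorem~\ref{HLP_Riemannian_mvt} then yields
\[
    \left\|\mathbf{\Delta}^{{\bf k}/2}x\right\|_{L_2(\mathbb{T}^a)} \leqslant \prod\limits_{j=0}^m \left\|\mathbf{\Delta}^{{\bf r}^j/2}x\right\|_{L_2(\mathbb{T}^a)}^{\lambda_j},
\]
and rewriting every norm through the identity of the previous paragraph converts this into the claimed inequality for $\mathbf{D}$.

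Finally, for sharpness I would transport the equality characterization of Theorem~\ref{HLP_Riemannian_mvt}. There equality is attained on products $\varphi_{j_1}\times\ldots\times\varphi_{j_a}$ of eigenfunctions with $j_1\cdots j_a\ne0$; on $\mathbb{T}$ the eigenfunctions are $e^{int}$, and the condition of nonzero index translates into a nonzero frequency $n\ne0$. Thus each $e^{i(n_1t_1+\ldots+n_at_a)}$ with $n_1\cdots n_a\ne0$ realizes equality, as asserted. The reduction is essentially mechanical, and I do not expect a genuine analytic obstacle: the only point requiring care is the bookkeeping of the factor $1/2$ between the order $s$ of the Weyl derivative and the power $s/2$ of $\Delta$, together with the verification that this rescaling leaves the convex-hull condition, the barycentric condition, and the Sobolev domain invariant. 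The tensorization of the scalar norm identity is immediate from Parseval's identity in the product basis.
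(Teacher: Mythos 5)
Your proposal is correct and follows essentially the same route as the paper, which states Corollary~\ref{Cor_2} as a direct specialization of Theorem~\ref{HLP_Riemannian_mvt} to $\mathcal{M}=\mathbb{T}$ via the identity $\|D^sx\|_{L_2(\mathbb{T})}=\|\Delta^{s/2}x\|_{L_2(\mathbb{T})}$ tensorized over coordinates, with the same halving of exponents ${\bf k}\mapsto{\bf k}/2$, ${\bf r}^j\mapsto{\bf r}^j/2$ preserving the convex-hull and barycentric hypotheses and matching the Sobolev domains. Your handling of the equality case is also sound, since each exponential $e^{i(n_1t_1+\ldots+n_at_a)}$ with $n_1\cdots n_a\neq 0$ is a joint eigenfunction of all the operators involved, so equality can be checked directly.
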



\subsubsection{Taikov type inequalities on compact Riemannian manifolds}

Let us establish consequences from Theorem~\ref{Taikov_for_multipliers_abst} for the powers of the Laplace-Beltrami operator. Let $\mathcal{M}$ be a $d$-dimensional $C^\infty$ compact Riemannian manifold without boundary. From a well-known theorem of H.~Weyl (see, {\it e.g.}~\cite[\S8]{Besse}) it follows that
\begin{equation}
\label{eq_0}
	\lim\limits_{j\to+\infty} j^{-\frac 1d}\mu_j = 2\pi \left(C(d){\rm Vol}\,(\mathcal{M})\right)^{-\frac 1d},
\end{equation}
where $C(d)$ is the volume of $d$-dimensional ball with unit radius, and\break ${\rm Vol}(\mathcal{M}) = \int_{\mathcal{M}}{\rm d}\mu_g(t)$. In turn, by L.~H\"ormander result (see~\cite{Hor_68}) there exists $C_3 > 0$ such that, for any eigenfunction $\phi$ corresponding to eigenvalue $\mu^2\ne 0$ of $\Delta$, 
\begin{equation}
\label{eq_1}
	\|\phi\|_{L_\infty(\mathcal{M})} := \textrm{essup}\left\{|\phi(t)|\,:\,t\in \mathcal{M}\right\} \leqslant C_3\mu^{\frac{d-1}{2}}\|\phi\|_{L_2(\mathcal{M})}.
\end{equation}

For $\xi\in\mathcal{M}$, consider linear functional $f_\xi\in\left(C(\mathcal{M})\right)^*$ mapping every $x\in C(\mathcal{M})$ into its value $x(\xi)$ at point $\xi$, {\it i.e.} $f_\xi x = x(\xi)$. Denote $\mathcal{C}_\xi := \mathcal{C}\left(f_\xi,\Delta^k,\Delta^{r^0},\ldots,\Delta^{r^m},\overline{\lambda}\right)$, where the constant on the right hand part was defined in~\eqref{condition_constant_new_abst}. Combining inequalities~\eqref{eq_1} and~\eqref{eq_0} we obtain
\[
	\mathcal{C}_\xi = \sup\limits_{{\bf h}\in \mathbb{R}^{m+1}_+} \prod\limits_{j=0}^mh_j^{\lambda_j} \sum\limits_{j=1}^\infty \frac{\mu_j^{4k}\varphi_{j}^2(\xi)}{\sum\limits_{l=0}^m h_l\mu_j^{4r^l}} \leqslant C_4\sup\limits_{{\bf h}\in \mathbb{R}^{m+1}_+} \prod\limits_{j=0}^mh_j^{\lambda_j} \sum\limits_{j=1}^\infty \frac{j^{\frac{4k+d-1}{d}}}{\sum\limits_{l=0}^m h_l j^{\frac{4r^l}{d}}}
\]
with some constant $C_4>0$ independent of $\xi$. Hence, there holds true the following corollary from Theorem~\ref{Taikov_for_multipliers_abst}, Lemma~\ref{Lemma_constant_finiteness} and Remark~\ref{important_rem}.

\begin{theorem}
\label{Taikov_Riemannian}
	Let $\mathcal{M}$ be a $d$-dimensional $C^\infty$ compact Riemannian manifold without boundary, $m\in\mathbb{N}$, $r^0<\ldots< r^m\in \mathbb{R}$, $k\in \mathbb{R}$ and $\lambda_0,\ldots,\lambda_m > 0$ be such that $\lambda_0 + \ldots + \lambda_m = 1$ and $k + \frac{2d-1}{4} \leqslant \lambda_0 r^0 + \ldots + \lambda_m r^m$. Then, for every $x\in H_0^{2r^m}(\mathcal{M})$, there holds sharp inequality:
	\[
		\left\|\Delta^{k}x\right\|_{C(\mathcal{M})} \leqslant \sqrt{\mathcal{C}\cdot\prod\limits_{j=0}^m\lambda_j^{-\lambda_j}}\cdot\prod\limits_{j=0}^m \left\|\Delta^{r^j}x\right\|_{L_2(\mathcal{M})}^{\lambda_j},
	\]	
	where
	\[
		\mathcal{C} = \sup\limits_{\xi\in\mathcal{M}}\sup\limits_{{\bf h}\in \mathbb{R}^{m+1}_+} \prod\limits_{j=0}^mh_j^{\lambda_j}\sum\limits_{j=1}^\infty \frac{\mu_j^{4k}\varphi_j^2(\xi)}{\sum\limits_{l=0}^m h_l\mu_j^{4r^l}}.
	\]
\end{theorem}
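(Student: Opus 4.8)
The plan is to read Theorem~\ref{Taikov_Riemannian} as a pointwise-in-$\xi$ instance of the abstract multiplicative Taikov inequality, Theorem~\ref{Taikov_for_multipliers_abst}, and then to pass to the uniform ($C(\mathcal{M})$) norm by taking the supremum over $\xi\in\mathcal{M}$. First I would fix the Hilbert space $H=H'=H_0^0(\mathcal{M})$ with orthonormal basis $\{\varphi_j\}_{j\geqslant 1}$, and set $A:=\Delta^{k}$, $B_l:=\Delta^{r^l}$ for $l=0,\ldots,m$, and $f:=f_\xi$, the evaluation functional $x\mapsto x(\xi)$. Since $\Delta^{s}\varphi_j=\mu_j^{2s}\varphi_j$, one has $\langle f_\xi,A\varphi_j\rangle=\mu_j^{2k}\varphi_j(\xi)$ and $\|B_l\varphi_j\|_{H'}=\mu_j^{2r^l}$, so that $b_{j,\mathbf h}=\sum_{l=0}^m h_l\mu_j^{4r^l}$ and the quantity $\mathcal C_\xi$ coincides with the one written just before the theorem. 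Conditions (B1)--(B4) are immediate from orthonormality of the eigenfunctions and self-adjointness (hence closedness) of each $\Delta^{r^l}$, while (A1), (Af1), (Af2) follow from standard properties of the self-adjoint operator $\Delta$; for the values of $\xi$ relevant to the supremum some eigenfunction does not vanish at $\xi$, and the remaining $\xi$ (where every $\varphi_j(\xi)=0$, $j\geqslant 1$) contribute $0$ on $H_0$. I would also note that $\mathcal D(A)\cap H_{\mathbf B}=H_0^{2r^m}(\mathcal M)$, matching the domain in the statement.

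With this dictionary, Theorem~\ref{Taikov_for_multipliers_abst} yields, for every fixed $\xi$ and every $x\in\mathcal D(\Delta^k)\cap H_{\mathbf B}$,
\[
	\left|(\Delta^k x)(\xi)\right|\leqslant\sqrt{\mathcal C_\xi\cdot\prod_{j=0}^m\lambda_j^{-\lambda_j}}\;\prod_{j=0}^m\left\|\Delta^{r^j}x\right\|_{L_2(\mathcal M)}^{\lambda_j}.
\]
Taking the supremum over $\xi\in\mathcal M$ turns the left-hand side into $\|\Delta^k x\|_{C(\mathcal M)}$ and replaces $\mathcal C_\xi$ by $\mathcal C=\sup_\xi\mathcal C_\xi$, which is precisely the asserted inequality. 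Consequently the only genuine content remaining is the finiteness, \emph{uniformly in} $\xi$, of $\mathcal C$, which is exactly what is needed to legitimately invoke Theorem~\ref{Taikov_for_multipliers_abst}.

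That finiteness is the main obstacle, and I would dispatch it through Lemma~\ref{Lemma_constant_finiteness} together with Remark~\ref{important_rem}. Hörmander's eigenfunction bound~\eqref{eq_1}, whose constant $C_3$ does not depend on $\xi$, gives $|\langle f_\xi,A\varphi_j\rangle|\leqslant C_3\mu_j^{2k+(d-1)/2}$, and Weyl's law~\eqref{eq_0} converts this numerator and the denominators into powers of $j$; this produces exactly the $\xi$-uniform estimate
\[
	\mathcal C_\xi\leqslant C_4\,\sup_{\mathbf h\in\mathbb R_+^{m+1}}\prod_{j=0}^m h_j^{\lambda_j}\sum_{j=1}^\infty\frac{j^{(4k+d-1)/d}}{\sum_{l=0}^m h_l\, j^{4r^l/d}}
\]
recorded before the theorem, with $C_4$ independent of $\xi$. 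We are thereby reduced to the one-dimensional case ($d=1$) of Lemma~\ref{Lemma_constant_finiteness}, with scalar exponents $k_{\mathrm{lem}}=(4k+d-1)/(2d)$ and $r^l_{\mathrm{lem}}=2r^l/d$. A short arithmetic check shows that the theorem's hypothesis $k+\tfrac{2d-1}{4}\leqslant\sum_l\lambda_l r^l$ is equivalent to $\sum_l\lambda_l r^l_{\mathrm{lem}}\geqslant k_{\mathrm{lem}}+\tfrac12$: the boundary case of equality is handled by Lemma~\ref{Lemma_constant_finiteness} itself, the strict case reduces to a convergent one-dimensional series via the elementary AM--GM bound of Lemma~\ref{trivial_sufficient_condition}, and the two together (Remark~\ref{important_rem}) give $\mathcal C<\infty$ throughout the admissible range. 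I expect the only delicate points to be this exponent bookkeeping and the verification that the constant in~\eqref{eq_1} is truly $\xi$-independent, so that $\sup_\xi\mathcal C_\xi$ is finite rather than merely each individual $\mathcal C_\xi$.
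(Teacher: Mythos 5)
Your proposal is correct and follows essentially the same route as the paper, which obtains Theorem~\ref{Taikov_Riemannian} exactly as you do: instantiate Theorem~\ref{Taikov_for_multipliers_abst} with $A=\Delta^k$, $B_l=\Delta^{r^l}$, $f=f_\xi$, bound $\mathcal{C}_\xi$ uniformly in $\xi$ via H\"ormander's estimate~\eqref{eq_1} and Weyl's law~\eqref{eq_0}, and conclude finiteness of $\mathcal{C}$ from Lemma~\ref{Lemma_constant_finiteness} together with Remark~\ref{important_rem} (equality case from the lemma, strict case from the AM--GM bound of Lemma~\ref{trivial_sufficient_condition}). Your exponent bookkeeping $k_{\mathrm{lem}}=(4k+d-1)/(2d)$, $r^l_{\mathrm{lem}}=2r^l/d$ reproduces the displayed estimate preceding the theorem, and your reading of the admissible range as $\sum_l\lambda_l r^l_{\mathrm{lem}}\geqslant k_{\mathrm{lem}}+\tfrac12$ is the intended one (the inequality in Remark~\ref{important_rem} is evidently misprinted in the opposite direction).
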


Consider an important example of compact Riemannian manifolds -- compact rank one symmetric spaces (CROSS). Let $\mathcal{M}$ be a CROSS, {\it i.e.} one of the following spaces: the Euclidean sphere $\mathbb{S}^b$, the projective space $\mathbb{R}P^b$, $\mathbb{C}P^b$, $\mathbb{H}P^b$, where $b\in\mathbb{N}$, or the Cayley plane $\mathbb{C}\text{a}P^2$. For convenience, let $0 = \gamma_0< \gamma_1 < \ldots $ be distinct eigenvalues of $\Delta^{\frac 12}$ and $\nu_0 = 1, \nu_1,\ldots$ be their multiplicities. Clearly, $\mu_0 = \gamma_0$, $\mu_1 = \ldots = \mu_{\nu_1} = \gamma_1$ and $\mu_{\nu_1+\ldots+\nu_r+1} = \ldots = \mu_{\nu_1+\ldots+\nu_r+\nu_{r+1}} = \gamma_r$ for $r\in\mathbb{N}$. Re-enumerate eigenfunctions $\varphi_0,\varphi_1,\ldots$ as $\left\{\phi_{j,l}\right\}_{j\in \ZZ_+,\,l=1,\ldots,\nu_j}$ to account for multiplicities of corresponding eigenvalues, {\it i.e.} $\Delta^{\frac 12}\phi_{j,l} = \gamma_j\phi_{j,l}$. It is well-known (see, {\it e.g.},~\cite{Kus_19}) that, for every $j\in\mathbb{N}$ and $\xi\in\mathcal{M}$, 
\[
	\sum\limits_{l=1}^{\nu_j} \phi_{j,l}^2(\xi) = \nu_j,
\]
where 
\[
    \nu_j = \frac{(2j+\alpha+\beta+1)\Gamma(\beta+1)\Gamma(j+\alpha+1)\Gamma(j+\alpha+\beta+1)}{\Gamma(\alpha+\beta+2)\Gamma(\alpha+1)\Gamma(j+1)\Gamma(j+\beta+1)},
\]
$\Gamma(t)$ is the Euler gamma function, and 
\begin{itemize}
	\item for $\mathcal{M} = \mathbb{S}^b$, $d=b$, $\gamma_j^2 = j(j+b-1)$ and $\alpha = \beta = \frac{d-2}{2}$;
	\item for $\mathcal{M} = \mathbb{R}P^b$, $d=b$, $\gamma_j^2 = 2j(2j+b-1)$ and $\alpha = \frac{d-2}{2}$, $\beta = -\frac{1}{2}$;
	\item for $\mathcal{M} = \mathbb{C}P^b$, $d = 2b$, $\gamma_j^2 = 4j(j+b)$ and $\alpha = \frac{d-2}{2}$, $\beta = 0$;
	\item for $\mathcal{M} = \mathbb{H}P^b$, $d=4b$, $\gamma_j^2 = 4j(j+2b+1)$ and $\alpha = \frac{d-2}{2}$, $\beta = 1$;
	\item for $\mathcal{M} = \mathbb{C}aP^2$, $d = 16$, $\gamma_j^2 = 4j(j+11)$ and $\alpha = \frac{d-2}{2}$, $\beta = 3$.
\end{itemize}

Clearly, there exists $C_5 > 0$ independent of $j$ and $\xi$ such that
\begin{equation}
\label{eq_2}
	\frac{1}{C_5}<\frac{1}{j^{d-1}}\sum\limits_{l=1}^{\nu_j}\phi^2_{j,l}(\xi) < C_5,
\end{equation}
and, hence, there exists $C_6 > 0$ independent of $\xi$ such that
\[
    \mathcal{C}_\xi = \sup\limits_{{\bf h}\in \mathbb{R}_+^{m+1}} \prod\limits_{j=0}^m h_j^{\lambda_j} \sum\limits_{j=1}^\infty \frac{\gamma_j^{4k}}{\sum\limits_{l=0}^m h_l\gamma_j^{4r^l}}\sum\limits_{l=1}^{\nu_j} \phi_{j,l}^2(\xi) \leqslant C_6 \sup\limits_{{\bf h}\in \mathbb{R}_+^{m+1}} \prod\limits_{j=0}^m h_j^{\lambda_j} \sum\limits_{j=1}^\infty \frac{j^{4k}}{\sum\limits_{l=0}^m h_l j^{4r^l}}.
\]

Combining Theorem~\ref{Taikov_for_multipliers_abst}, Lemma~\ref{Lemma_constant_finiteness} and Remark~\ref{important_rem}, we obtain.

\begin{theorem}
\label{Taikov_CROSS}
	Let $\mathcal{M}$ be a $d$-dimensional CROSS, $m\in\mathbb{N}$, $r^0<\ldots< r^m\in \mathbb{R}$, $k\in \mathbb{R}$ and $\lambda_0,\ldots,\lambda_m > 0$ be such that $\lambda_0 + \ldots + \lambda_m = 1$ and $k + \frac{d}{4} \leqslant \lambda_0 r^0 + \ldots + \lambda_m r^m$. Then, for every $x\in H_0^{2r^m}(\mathcal{M})$, there holds sharp inequality:
	\[
		\left\|\Delta^{k}x\right\|_{C(\mathcal{M})} \leqslant \sqrt{\mathcal{C}\cdot\prod\limits_{j=0}^m\lambda_j^{-\lambda_j}}\cdot\prod\limits_{j=0}^m \left\|\Delta^{r^j}x\right\|_{L_2(\mathcal{M})}^{\lambda_j},
	\]	
	where
	\[
	\mathcal{C} \!=\! \sup\limits_{{\bf h}\in \mathbb{R}^{m+1}_+} \prod\limits_{j=0}^mh_j^{\lambda_j}\sum\limits_{j=1}^\infty \frac{(2j\!+\!\alpha\!+\!\beta\!+\!1)\Gamma(\beta\!+\!1)\Gamma(j\!+\!\alpha\!+\!1)\Gamma(j\!+\!\alpha\!+\!\beta\!+\!1)\gamma_j^{4k}}{\Gamma(\alpha\!+\!\beta\!+\!2)\Gamma(\alpha\!+\!1)\Gamma(j\!+\!1)\Gamma(j\!+\!\beta\!+\!1)\left(\sum\limits_{l=0}^m h_l\gamma_j^{4r^l}\right)}.
	\]
\end{theorem}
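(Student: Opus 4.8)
The plan is to realize the uniform norm as a supremum of point evaluations and then invoke the abstract multiplicative Taikov inequality. For a fixed $\xi\in\mathcal{M}$ the evaluation functional $f_\xi\in\left(C(\mathcal{M})\right)^\ast$ satisfies $(\Delta^k x)(\xi)=\langle f_\xi,\Delta^k x\rangle$, so I would first apply Theorem~\ref{Taikov_for_multipliers_abst} to the triple $\left(f_\xi,\Delta^k,\Delta^{r^0},\ldots,\Delta^{r^m}\right)$. Conditions (B1)--(B4) for the self-adjoint powers $\Delta^{r^j}$ and (A1), (Af1), (Af2) for $\Delta^k$ and $f_\xi$ are verified exactly as in the Riemannian case treated immediately above, using that every power $\Delta^{r}$ is diagonalized by the orthonormal eigenbasis $\left\{\phi_{j,l}\right\}$ and is closed because it is self-adjoint.

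This produces, for every admissible $x$ and every $\xi\in\mathcal{M}$, the estimate
\[
	\left|(\Delta^k x)(\xi)\right| \leqslant \sqrt{\mathcal{C}_\xi\prod\limits_{j=0}^m\lambda_j^{-\lambda_j}}\cdot\prod\limits_{j=0}^m\left\|\Delta^{r^j}x\right\|_{L_2(\mathcal{M})}^{\lambda_j},
\]
with $\mathcal{C}_\xi=\mathcal{C}\left(f_\xi,\Delta^k,\Delta^{r^0},\ldots,\Delta^{r^m},\overline{\lambda}\right)$. Taking the supremum over $\xi$ turns the left-hand side into $\left\|\Delta^k x\right\|_{C(\mathcal{M})}$, while the product of $L_2$-norms on the right is independent of $\xi$; thus the constant becomes $\mathcal{C}=\sup_\xi\mathcal{C}_\xi$. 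The decisive point is that the addition formula $\sum_{l=1}^{\nu_j}\phi_{j,l}^2(\xi)=\nu_j$ is independent of $\xi$: substituting it collapses the double sum over $(j,l)$ in $\mathcal{C}_\xi$ into a single sum over $j$ and shows that $\mathcal{C}_\xi$ does not depend on $\xi$, so $\mathcal{C}_\xi\equiv\mathcal{C}$ coincides with the expression displayed in the statement.

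It remains to prove $\mathcal{C}<\infty$ under the hypothesis $k+\tfrac{d}{4}\leqslant\lambda_0 r^0+\ldots+\lambda_m r^m$, which is where the real work lies. After summation over $l$ the series reads $\sup_{\bf h}\prod_j h_j^{\lambda_j}\sum_{j\geqslant 1}\frac{\gamma_j^{4k}\nu_j}{\sum_l h_l\gamma_j^{4r^l}}$, and I would regard it as the abstract constant \eqref{condition_constant_new_abst} for a one-dimensional index $j\in\mathbb{N}$, with squared functional value $\gamma_j^{4k}\nu_j$ and squared operator norms $\gamma_j^{4r^l}$. Using $\gamma_j\asymp j$ (since each $\gamma_j^2$ is quadratic in $j$) and $\nu_j\asymp j^{d-1}$ (from \eqref{eq_2}), the effective functional exponent is $\kappa=2k+\tfrac{d-1}{2}$ and the effective operator exponents are $\rho^l=2r^l$, so conditions~\eqref{multipliers_cond_01}--\eqref{multipliers_cond_02} of Lemma~\ref{Lemma_constant_finiteness} hold with bounded $\alpha_j$. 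The balance relation $\kappa+\tfrac12=\sum_l\lambda_l\rho^l$ required there becomes precisely $k+\tfrac{d}{4}=\sum_l\lambda_l r^l$, and the interior condition $\kappa+\tfrac12\in{\rm int}\,\mathcal{S}\left(\rho^0,\ldots,\rho^m\right)$ reduces to $r^0<k+\tfrac{d}{4}<r^m$, which holds automatically at balance since $r^0<\ldots<r^m$ and the $\lambda_l$ are positive.

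The hard part is the critical case $k+\tfrac{d}{4}=\sum_l\lambda_l r^l$: here the crude estimate of Lemma~\ref{trivial_sufficient_condition} fails, because the associated series $\sum_j j^{\,4k+d-1-4\sum_l\lambda_l r^l}$ degenerates into the harmonic series $\sum_j j^{-1}$ and finiteness cannot be read off termwise; one must control the competition between the weights $h_l$ and the tails uniformly in $\bf h$, which is exactly what Lemma~\ref{Lemma_constant_finiteness} delivers via its weighted AM--GM interpolation. In the strict case $k+\tfrac{d}{4}<\sum_l\lambda_l r^l$ the same series converges and Lemma~\ref{trivial_sufficient_condition} applies directly; combining the two regimes as in Remark~\ref{important_rem} gives $\mathcal{C}<\infty$ on the whole range $k+\tfrac{d}{4}\leqslant\sum_l\lambda_l r^l$. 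Finally, sharpness is inherited from Theorem~\ref{Taikov_for_multipliers_abst}: for a fixed $\xi$ its extremal sequence $x^\ast_{{\bf h},N}$ drives the ratio $\left|(\Delta^k x^\ast_{{\bf h},N})(\xi)\right|\big/\prod_j\left\|\Delta^{r^j}x^\ast_{{\bf h},N}\right\|_{L_2}^{\lambda_j}$ to $\sqrt{\mathcal{C}\prod_j\lambda_j^{-\lambda_j}}$, and since $\left\|\Delta^k x^\ast_{{\bf h},N}\right\|_{C(\mathcal{M})}\geqslant\left|(\Delta^k x^\ast_{{\bf h},N})(\xi)\right|$ the constant cannot be lowered.
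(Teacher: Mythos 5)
Your proposal is correct and follows essentially the same route as the paper: evaluation functionals $f_\xi$ fed into Theorem~\ref{Taikov_for_multipliers_abst}, the CROSS addition formula $\sum_{l=1}^{\nu_j}\phi_{j,l}^2(\xi)=\nu_j$ to make $\mathcal{C}_\xi$ independent of $\xi$, finiteness via Lemma~\ref{trivial_sufficient_condition} in the strict case and Lemma~\ref{Lemma_constant_finiteness} with Remark~\ref{important_rem} in the critical case $k+\frac{d}{4}=\sum_l\lambda_l r^l$, and sharpness inherited from the pointwise extremal sequence. Your explicit bookkeeping (effective exponents $\kappa=2k+\frac{d-1}{2}$, $\rho^l=2r^l$, numerator growth $j^{4k+d-1}$) is consistent with the stated hypothesis and in fact cleaner than the paper's abbreviated display, so nothing is missing.
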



In case $\mathcal{M} = \mathbb{S}^b$ the statement of Theorem~\ref{Taikov_CROSS} was proved by A.\,A.~Ilyin (see~\cite[Theorem~1.2]{Ily_98}). In case $\mathcal{M} = \mathbb{T}$ it is evident that $\left|\left<f_\xi, D^k e^{int}\right>\right| = \left|\left<f_\xi, \Delta^{\frac k2} e^{int}\right>\right|$. Hence, we obtain the following consequence from Theorem~\ref{Taikov_CROSS}.

\begin{cor}
\label{Cor_3}
	Let $m\in\mathbb{N}$, $r^0<\ldots< r^m\in \mathbb{R}$, $k\in \mathbb{R}$ and $\lambda_0,\ldots,\lambda_m > 0$ be such that $\lambda_0 + \ldots + \lambda_m = 1$ and $k + \frac{1}{2} \leqslant \lambda_0 r^0 + \ldots + \lambda_m r^m$. Then, for every $x\in H_0^{r^m}(\mathbb{T})$, there holds true sharp inequality:
	\[
		\left\|D^{k}x\right\|_{C(\mathcal{M})} \leqslant \sqrt{\mathcal{C}\cdot\prod\limits_{j=0}^m\lambda_j^{-\lambda_j}}\cdot\prod\limits_{j=0}^m \left\|D^{r^j}x\right\|_{L_2(\mathcal{M})}^{\lambda_j},
	\]	
	where
	\[
		\mathcal{C} = 2\sup\limits_{{\bf h}\in \mathbb{R}^{m+1}_+} \prod\limits_{j=0}^mh_j^{\lambda_j}\sum\limits_{j=1}^\infty \frac{j^{2k}}{\sum\limits_{l=0}^m h_l j^{2r^l}}.
	\]
\end{cor}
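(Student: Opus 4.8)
The plan is to realize $\mathbb{T}$ as the one-dimensional CROSS $\mathbb{S}^1$ (so that $b=d=1$) and to deduce the assertion from Theorem~\ref{Taikov_CROSS} by passing from the Weyl derivatives $D^s$ to powers of the Laplace-Beltrami operator. On $\mathbb{T}$ one has $\Delta=-D^2$, the eigenvalues of $\Delta^{1/2}$ are $\gamma_j=j$, and for every real $s$ the identity $\left\|D^sx\right\|_{L_2(\mathbb{T})}=\left\|\Delta^{s/2}x\right\|_{L_2(\mathbb{T})}$ holds on $H_0^{s}(\mathbb{T})$. Accordingly, I would apply Theorem~\ref{Taikov_CROSS} with the half-powers $\widetilde{k}:=k/2$ and $\widetilde{r}^{\,j}:=r^j/2$, $j=0,\ldots,m$, which inherit $\lambda_0+\ldots+\lambda_m=1$ and $\widetilde{k}=\lambda_0\widetilde{r}^{\,0}+\ldots+\lambda_m\widetilde{r}^{\,m}$.

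Under this substitution the hypothesis $\widetilde{k}+d/4\leqslant\lambda_0\widetilde{r}^{\,0}+\ldots+\lambda_m\widetilde{r}^{\,m}$ of Theorem~\ref{Taikov_CROSS} reads $k/2+1/4\leqslant(\lambda_0r^0+\ldots+\lambda_mr^m)/2$, which upon doubling is exactly the corollary's assumption $k+1/2\leqslant\lambda_0r^0+\ldots+\lambda_mr^m$; moreover the admissible class $H_0^{2\widetilde{r}^{\,m}}(\mathbb{T})=H_0^{r^m}(\mathbb{T})$ coincides with the one in the statement. The decisive point is that the constant produced by Theorem~\ref{Taikov_for_multipliers_abst} depends on the intermediate operator only through the moduli $\left|\left<f_\xi,Ae_n\right>\right|$ and the norms $\left\|B_je_n\right\|_{H'}$. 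Since $\left|\left<f_\xi,D^ke^{int}\right>\right|=\left|\left<f_\xi,\Delta^{k/2}e^{int}\right>\right|$ and $\left\|D^{r^j}e^{int}\right\|_{L_2(\mathbb{T})}=\left\|\Delta^{r^j/2}e^{int}\right\|_{L_2(\mathbb{T})}$, the constant attached to $\left(f_\xi,D^k,D^{r^0},\ldots,D^{r^m}\right)$ coincides with the one attached to $\left(f_\xi,\Delta^{\widetilde{k}},\Delta^{\widetilde{r}^{\,0}},\ldots,\Delta^{\widetilde{r}^{\,m}}\right)$, even though $D^kx$ and $\Delta^{k/2}x$ differ by a frequency-dependent phase. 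Taking the supremum over $\xi\in\mathbb{T}$ of the resulting pointwise Taikov estimate turns the left-hand side into $\left\|D^kx\right\|_{C(\mathbb{T})}$.

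Finally, I would evaluate the constant. For $\mathbb{S}^1$ one has $\alpha=\beta=(d-2)/2=-1/2$, and the multiplicity factor appearing in the summand of Theorem~\ref{Taikov_CROSS} collapses to $\nu_j=2$ for every $j\geqslant1$, reflecting the pair $e^{\pm ijt}$. Substituting $\gamma_j=j$, so that $\gamma_j^{4\widetilde{k}}=j^{2k}$ and $\gamma_j^{4\widetilde{r}^{\,l}}=j^{2r^l}$, reduces $\mathcal{C}$ to $2\sup_{{\bf h}\in\RR_+^{m+1}}\prod_{j=0}^m h_j^{\lambda_j}\sum_{j=1}^\infty\frac{j^{2k}}{\sum_{l=0}^m h_lj^{2r^l}}$, which is precisely the constant in the statement. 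Finiteness of $\mathcal{C}$ and sharpness are inherited from Theorem~\ref{Taikov_CROSS} (through Lemma~\ref{Lemma_constant_finiteness} and Remark~\ref{important_rem}), so the only mildly technical step is confirming $\nu_j=2$ from the Gamma-function expression; the remainder is bookkeeping between the two differentiation calculi, and I expect that verification of the multiplicity to be the one place requiring genuine care.
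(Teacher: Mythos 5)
Your proposal is correct and follows essentially the same route as the paper, which obtains Corollary~\ref{Cor_3} precisely by specializing Theorem~\ref{Taikov_CROSS} to $\mathcal{M}=\mathbb{T}=\mathbb{S}^1$ (so $d=1$, $\gamma_j=j$, $\alpha=\beta=-\tfrac12$, whence $\rho_{j,\alpha,\beta}=\nu_j=2$) and transferring from $\Delta^{k/2}$ to the Weyl derivative via $\left|\left<f_\xi, D^k e^{int}\right>\right| = \left|\left<f_\xi, \Delta^{\frac k2} e^{int}\right>\right|$, exactly as you do. Your half-power substitution, the Gamma-function verification of $\nu_j=2$, and the translation of the hypothesis into $k+\tfrac12\leqslant\lambda_0r^0+\ldots+\lambda_mr^m$ are just the details the paper leaves implicit.
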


Corollary~\ref{Cor_3} generalizes the result of A.\,Yu~Shadrin~\cite{Sha_90}, who considered the case $m=1$, $r^0 = 0$ and $\lambda_1 = \frac{2k+1}{2r}$.


Next, we present multivariate analogues of above results. From Theorem~\ref{Taikov_for_multipliers_abst} we obtain the following corollary.

\begin{theorem}
	\label{Taikov_Riemannian_mvt}
	Let $\mathcal{M}$ be a $d$-dimensional $C^\infty$ compact Riemannian manifold without boundary, $a\in\mathbb{N}$, $m\in\mathbb{N}$, ${\bf k},{\bf r}^0,\ldots, {\bf r}^m\in \mathbb{R}^a$ be such that ${\bf k} + \frac{2d-1}{4} \cdot{\bf 1}\in{\rm int}\,\mathcal{S}\left({\bf r}^0,\ldots,{\bf r}^m\right)$. Also, let $\lambda_0,\ldots,\lambda_m > 0$ be such that $\lambda_0 + \ldots + \lambda_m = 1$ and ${\bf k} + \frac{2d-1}{4}\cdot{\bf 1} \leqslant \lambda_0 {\bf r}^0 + \ldots + \lambda_m {\bf r}^m$. Then, for every $x\in \bigcap\limits_{l=0}^m H_0^{2{\bf r}^l}(\mathcal{M}^a)$, there holds sharp inequality:
	\[
		\left\|{\bf \Delta}^{\bf k}x\right\|_{C(\mathcal{M}^a)} \leqslant \sqrt{\mathcal{C}\cdot\prod\limits_{j=0}^m\lambda_j^{-\lambda_j}}\cdot\prod\limits_{j=0}^m \left\|{\bf \Delta}^{{\bf r}^j}x\right\|_{L_2(\mathcal{M}^a)}^{\lambda_j},
	\]	
	where
	\[
		\mathcal{C} = \sup\limits_{\xi_1,\ldots,\xi_a\in\mathcal{M}^a}\sup\limits_{{\bf h}\in \mathbb{R}^{m+1}_+} \prod\limits_{j=0}^mh_j^{\lambda_j}\sum\limits_{{\bf j}\in\mathbb{N}^d} \frac{\mu_{j_1}^{4k_1}\ldots\mu_{j_a}^{4k_a}\varphi_{j_1}^2(\xi_1)\ldots\varphi_{j_a}^2(\xi_a)}{\sum\limits_{l=0}^m h_l\mu_{j_1}^{4r^l_1}\ldots\mu_{j_a}^{4r^l_a}}.
	\]
\end{theorem}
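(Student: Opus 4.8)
The plan is to derive Theorem~\ref{Taikov_Riemannian_mvt} from the abstract multiplicative Taikov inequality (Theorem~\ref{Taikov_for_multipliers_abst}) applied at each point of $\mathcal{M}^a$, mirroring the scalar case Theorem~\ref{Taikov_Riemannian}. First I would fix $\xi=(\xi_1,\ldots,\xi_a)\in\mathcal{M}^a$ and take $H=H'=L_2(\mathcal{M}^a)$ with the orthonormal tensor eigenbasis $e_{\bf n}=\varphi_{n_1}\times\cdots\times\varphi_{n_a}$, ${\bf n}\in\NN^a$, together with $A={\bf \Delta}^{\bf k}$, $B_j={\bf \Delta}^{{\bf r}^j}$, $X=C(\mathcal{M}^a)$, and $f=f_\xi$ the evaluation functional $f_\xi x=x(\xi)$. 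Because the tensor powers ${\bf \Delta}^{\bf s}$ are closed self-adjoint operators diagonalized by $\{e_{\bf n}\}$, conditions~(B1)--(B4), (A1) and (Af1) are immediate, and (Af2) holds since $f_\xi\in\left(C(\mathcal{M}^a)\right)^\ast$ while ${\bf \Delta}^{\bf k}$ is closed; these verifications are the same as those underlying Theorem~\ref{Taikov_Riemannian}. The explicit spectral action gives $\langle f_\xi,{\bf \Delta}^{\bf k}e_{\bf n}\rangle=\prod_{i=1}^a\mu_{n_i}^{2k_i}\varphi_{n_i}(\xi_i)$ and $\|{\bf \Delta}^{{\bf r}^j}e_{\bf n}\|_{L_2}^2=\prod_{i=1}^a\mu_{n_i}^{4r^j_i}$, so $b_{{\bf n},{\bf h}}=\sum_{l=0}^m h_l\prod_{i=1}^a\mu_{n_i}^{4r^l_i}$ and the constant produced by Theorem~\ref{Taikov_for_multipliers_abst} is exactly $\mathcal{C}_\xi=\mathcal{C}\bigl(f_\xi,{\bf \Delta}^{\bf k},{\bf \Delta}^{{\bf r}^0},\ldots,{\bf \Delta}^{{\bf r}^m},\overline{\lambda}\bigr)$, the inner double supremum in the asserted formula for $\mathcal{C}$.

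Granting finiteness of the constants, Theorem~\ref{Taikov_for_multipliers_abst} yields, for every $\xi$, the pointwise bound $|({\bf \Delta}^{\bf k}x)(\xi)|\leqslant\sqrt{\mathcal{C}_\xi\prod_{j=0}^m\lambda_j^{-\lambda_j}}\,\prod_{j=0}^m\|{\bf \Delta}^{{\bf r}^j}x\|_{L_2(\mathcal{M}^a)}^{\lambda_j}$. Taking the supremum over $\xi\in\mathcal{M}^a$ turns the left side into $\|{\bf \Delta}^{\bf k}x\|_{C(\mathcal{M}^a)}$ and replaces $\mathcal{C}_\xi$ by $\mathcal{C}=\sup_\xi\mathcal{C}_\xi$, which is precisely the claimed inequality. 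Sharpness is inherited: I would choose $\xi^\ast$ nearly attaining $\sup_\xi\mathcal{C}_\xi$ and then feed the extremal elements $x^\ast_{{\bf h},N}$ of Theorem~\ref{Taikov_for_multipliers_abst} for $f_{\xi^\ast}$ into the lower bound $\|{\bf \Delta}^{\bf k}x\|_{C(\mathcal{M}^a)}\geqslant|({\bf \Delta}^{\bf k}x)(\xi^\ast)|$, so that the corresponding ratio approaches the constant.

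The real content is to verify the standing hypothesis $\mathcal{C}<\infty$ of Theorem~\ref{Taikov_for_multipliers_abst} \emph{uniformly in} $\xi$, which is where Lemma~\ref{Lemma_constant_finiteness} and Remark~\ref{important_rem} enter. I would first apply H\"ormander's estimate~\eqref{eq_1}, $|\varphi_{n_i}(\xi_i)|\leqslant C_3\mu_{n_i}^{(d-1)/2}$, to obtain the $\xi$-independent bound $|\langle f_\xi,{\bf \Delta}^{\bf k}e_{\bf n}\rangle|\leqslant C_3^{a}\prod_{i=1}^a\mu_{n_i}^{2k_i+(d-1)/2}$, and then Weyl's asymptotics~\eqref{eq_0}, $\mu_{n}\asymp n^{1/d}$, to pass to bounds of the exact form~\eqref{multipliers_cond_01}--\eqref{multipliers_cond_02} on the lattice $\NN^a$, with the residual factors $\alpha_{\bf n}\equiv 1$ trivially bounded. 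The ambient dimension of Lemma~\ref{Lemma_constant_finiteness} is here $a$, and the resulting exponents are $\frac{2}{d}{\bf k}+\frac{d-1}{2d}{\bf 1}$ for the functional and $\frac{2}{d}{\bf r}^j$ for the operators $B_j$.

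It remains to check that the hypotheses of the theorem translate into those of Lemma~\ref{Lemma_constant_finiteness}. The linear bijection ${\bf v}\mapsto\frac{2}{d}{\bf v}$ preserves convex hulls and interiors, and the identity $\left(\frac{2}{d}{\bf k}+\frac{d-1}{2d}{\bf 1}\right)+\frac12{\bf 1}=\frac{2}{d}\left({\bf k}+\frac{2d-1}{4}{\bf 1}\right)$ shows that ${\bf k}+\frac{2d-1}{4}{\bf 1}\in{\rm int}\,\mathcal{S}({\bf r}^0,\ldots,{\bf r}^m)$ is exactly the interior condition of Lemma~\ref{Lemma_constant_finiteness} in the rescaled exponents, while ${\bf k}+\frac{2d-1}{4}{\bf 1}\leqslant\lambda_0{\bf r}^0+\cdots+\lambda_m{\bf r}^m$ is exactly the affine-combination condition needed to invoke Remark~\ref{important_rem}. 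Together they give $\sup_\xi\mathcal{C}_\xi<\infty$, uniformly in $\xi$ because H\"ormander's constant $C_3$ is. I expect the uniform finiteness to be the main obstacle: the sum ranges over the full lattice $\NN^a$, so convergence must be secured \emph{in every coordinate direction simultaneously}, and it is precisely the interior/convex-hull condition, exploited through the multidimensional argument of Lemma~\ref{Lemma_constant_finiteness}, that delivers this. With $\mathcal{C}<\infty$ established, all hypotheses of Theorem~\ref{Taikov_for_multipliers_abst} are met and the proof concludes.
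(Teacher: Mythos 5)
Your proposal is correct and follows essentially the route the paper intends: the paper states Theorem~\ref{Taikov_Riemannian_mvt} as a direct corollary of Theorem~\ref{Taikov_for_multipliers_abst}, with finiteness of $\mathcal{C}$ obtained exactly as in the univariate Theorem~\ref{Taikov_Riemannian} — H\"ormander's bound~\eqref{eq_1} and Weyl's asymptotics~\eqref{eq_0} reduce matters to the lattice model of Lemma~\ref{Lemma_constant_finiteness} (with ambient dimension $a$) together with Remark~\ref{important_rem}, and your rescaling identity $\bigl(\tfrac{2}{d}{\bf k}+\tfrac{d-1}{2d}{\bf 1}\bigr)+\tfrac12{\bf 1}=\tfrac{2}{d}\bigl({\bf k}+\tfrac{2d-1}{4}{\bf 1}\bigr)$ is precisely the bookkeeping that makes the hypotheses match. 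Your pointwise application of the abstract theorem at each $\xi\in\mathcal{M}^a$ followed by taking the supremum, and the sharpness via a near-extremal $\xi^\ast$ and the elements $x^\ast_{{\bf h},N}$, coincide with the paper's (implicit) argument, down to your tacit correction of the index set $\mathbb{N}^d$ to $\mathbb{N}^a$ in the constant.
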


\begin{theorem}
\label{Taikov_CROSS_mvt}
	Let $\mathcal{M}$ be a $d$-dimensional CROSS, $a\in\mathbb{N}$, $m\in\mathbb{N}$ and ${\bf k},{\bf r}^0,\ldots, {\bf r}^m\in \mathbb{R}^a$ such that ${\bf k} + \frac{d}{4} \cdot{\bf 1}\!\in\!{\rm int}\,\mathcal{S}\left({\bf r}^0,\ldots,{\bf r}^m\right)$. Also, let $\lambda_0,\ldots,\lambda_m > 0$ be such that $\lambda_0 + \ldots + \lambda_m = 1$ and ${\bf k} + \frac{d}{4}\cdot{\bf 1} \leqslant \lambda_0 {\bf r}^0 + \ldots + \lambda_m {\bf r}^m$. Then, for every $x\in \bigcap\limits_{l=0}^m H^{2{\bf r}^l}_0(\mathcal{M}^a)$, there holds true sharp inequality:
	\[
		\left\|{\bf \Delta}^{\bf k}x\right\|_{C(\mathcal{M}^a)} \leqslant \sqrt{\mathcal{C}\cdot\prod\limits_{j=0}^m\lambda_j^{-\lambda_j}}\cdot\prod\limits_{j=0}^m \left\|{\bf \Delta}^{{\bf r}^j}x\right\|_{L_2(\mathcal{M}^a)}^{\lambda_j},
	\]	
	where
	\[
		\mathcal{C} = 2^a\sup\limits_{{\bf h}\in \mathbb{R}^{m+1}_+} \prod\limits_{j=0}^mh_j^{\lambda_j}\sum\limits_{{\bf j}\in\mathbb{N}^d} \frac{\gamma_{j_1}^{4k_1}\ldots\gamma_{j_a}^{4k_a}\rho_{j_1,\alpha,\beta}\ldots\rho_{j_a,\alpha,\beta}}{\sum\limits_{l=0}^m h_l\gamma_{j_1}^{4r^l_1}\ldots\gamma_{j_a}^{4r^l_a}},
	\]
	and
	\[
		\rho_{j,\alpha,\beta} := \frac{(2j+\alpha+\beta+1)\Gamma(\beta+1)\Gamma(j+\alpha+1)\Gamma(j+\alpha+\beta+1)}{\Gamma(\alpha+\beta+2)\Gamma(\alpha+1)\Gamma(j+1)\Gamma(j+\beta+1)}.
	\]
\end{theorem}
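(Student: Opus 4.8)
The plan is to obtain Theorem~\ref{Taikov_CROSS_mvt} from the abstract multiplicative Taikov inequality (Theorem~\ref{Taikov_for_multipliers_abst}) along the very same route used for the univariate case (Theorem~\ref{Taikov_CROSS}), the only genuinely new feature being the tensor‑product structure of the eigenbasis on $\mathcal{M}^a$. First I would fix $H=L_2(\mathcal{M}^a)$ with the orthonormal basis $\{\phi_{j_1,l_1}\otimes\cdots\otimes\phi_{j_a,l_a}\}$ indexed by $a$‑tuples ${\bf j}\in\mathbb{N}^a$ together with multiplicity labels $l_i\in\{1,\dots,\nu_{j_i}\}$, take the intermediate operator $A={\bf\Delta}^{\bf k}$, the operators $B_l={\bf\Delta}^{{\bf r}^l}$ for $l=0,\dots,m$, and the functional $f=f_\xi$ of evaluation at $\xi=(\xi_1,\dots,\xi_a)$. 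Since each $\Delta_i^{s}$ is self‑adjoint, positive and diagonal in this basis, the operators $B_0,\dots,B_m$ satisfy conditions~(B1)--(B4) and the pair $(A,f_\xi)$ satisfies~(A1),~(Af1),~(Af2), exactly as in the proof of Theorem~\ref{Taikov_CROSS}. Because $\|{\bf\Delta}^{\bf k}x\|_{C(\mathcal{M}^a)}=\sup_{\xi}|\langle f_\xi,{\bf\Delta}^{\bf k}x\rangle|$, it suffices to apply Theorem~\ref{Taikov_for_multipliers_abst} for each fixed $\xi$ and then take the supremum over $\xi$.

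Next I would assemble the data entering the constant~\eqref{condition_constant_new_abst}. From ${\bf\Delta}^{\bf k}\phi_{{\bf j},{\bf l}}=\prod_{i=1}^a\gamma_{j_i}^{2k_i}\,\phi_{{\bf j},{\bf l}}$ one reads off $\langle f_\xi,{\bf\Delta}^{\bf k}\phi_{{\bf j},{\bf l}}\rangle=\prod_i\gamma_{j_i}^{2k_i}\phi_{j_i,l_i}(\xi_i)$ and $b_{({\bf j},{\bf l}),{\bf h}}=\sum_{l=0}^m h_l\prod_i\gamma_{j_i}^{4r^l_i}$, the latter being independent of the multiplicity label ${\bf l}$. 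Summing over ${\bf l}$ and invoking the CROSS addition formula $\sum_{l_i=1}^{\nu_{j_i}}\phi_{j_i,l_i}^2(\xi_i)=\nu_{j_i}$ collapses the $\xi$‑dependence completely and yields
\[
	\mathcal{C}_\xi=\sup_{{\bf h}\in\mathbb{R}_+^{m+1}}\prod_{l=0}^m h_l^{\lambda_l}\sum_{{\bf j}\in\mathbb{N}^a}\frac{\prod_{i=1}^a\gamma_{j_i}^{4k_i}\nu_{j_i}}{\sum_{l=0}^m h_l\prod_{i=1}^a\gamma_{j_i}^{4r^l_i}},
\]
which, upon substituting $\nu_j=\rho_{j,\alpha,\beta}$, is exactly the constant displayed in the statement.

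The heart of the argument is the finiteness of this supremum, and the key point is that the addition formula has already made the bound uniform in $\xi$, so the task is now purely analytic. Using the two‑sided asymptotics $\gamma_j\asymp j$ and $\nu_j\asymp j^{d-1}$ — the latter being~\eqref{eq_2}, obtained from the explicit Gamma‑quotient $\rho_{j,\alpha,\beta}$ via Stirling's formula — I would pass to
\[
	\mathcal{C}=\sup_\xi\mathcal{C}_\xi\leqslant C\sup_{{\bf h}\in\mathbb{R}_+^{m+1}}\prod_{l=0}^m h_l^{\lambda_l}\sum_{{\bf j}\in\mathbb{N}^a}\frac{\prod_{i=1}^a j_i^{4k_i+d-1}}{\sum_{l=0}^m h_l\prod_{i=1}^a j_i^{4r^l_i}},
\]
with $C$ independent of $\xi$. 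This is precisely the form treated by Lemma~\ref{Lemma_constant_finiteness}, with the number of tensor factors $a$ playing the role of the dimension there: setting $\alpha_{\bf j}\equiv1$ and matching exponents through~\eqref{multipliers_cond_01}--\eqref{multipliers_cond_02} gives the lemma's parameters $\widehat k_i=2k_i+\tfrac{d-1}{2}$ and $\widehat r^l_i=2r^l_i$, so that its interior hypothesis $\widehat{\bf k}+\tfrac12\cdot{\bf 1}\in{\rm int}\,\mathcal{S}(\widehat{\bf r}^0,\dots,\widehat{\bf r}^m)$ translates back into ${\bf k}+\tfrac d4\cdot{\bf 1}\in{\rm int}\,\mathcal{S}({\bf r}^0,\dots,{\bf r}^m)$, the hypothesis of the theorem. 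Lemma~\ref{Lemma_constant_finiteness} handles the balanced case ${\bf k}+\tfrac d4\cdot{\bf 1}=\sum_l\lambda_l{\bf r}^l$, while the strict range ${\bf k}+\tfrac d4\cdot{\bf 1}<\sum_l\lambda_l{\bf r}^l$ is covered by the elementary Lemma~\ref{trivial_sufficient_condition}, since there the product series $\sum_{{\bf j}}|\langle f_\xi,Ae_{\bf j}\rangle|^2\big/\prod_{l=0}^m\|B_le_{\bf j}\|_{H'}^{2\lambda_l}$ factorizes and converges coordinatewise; combined as in Remark~\ref{important_rem}, these give $\mathcal{C}<\infty$ on the entire admissible range of the theorem. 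With $\mathcal{C}<\infty$ in hand, Theorem~\ref{Taikov_for_multipliers_abst} delivers the sharp inequality, sharpness being witnessed by its extremal sequence $x^\ast_{{\bf h},N}$.

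The main obstacle I anticipate is exactly this uniform‑in‑$\xi$ finiteness of $\mathcal{C}$. Everything hinges on the addition formula $\sum_{l}\phi_{j,l}^2(\xi)=\nu_j$, which eliminates the dependence on $\xi$ and replaces the delicate pointwise bound~\eqref{eq_1} by an exact multiplicity count; without it one would be left estimating individual eigenfunctions $\phi_{j,l}(\xi_i)$ rather than their squared sum. Once this reduction is secured, the remaining multidimensional sum over ${\bf j}\in\mathbb{N}^a$, uniform in ${\bf h}$, is precisely what Lemma~\ref{Lemma_constant_finiteness} was built to control, so beyond careful bookkeeping of exponents no new estimate is required.
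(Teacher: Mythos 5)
Your proposal is correct and follows essentially the same route the paper itself takes (it states Theorem~\ref{Taikov_CROSS_mvt} as a direct corollary, with no separate proof): tensor the CROSS eigenbasis on $\mathcal{M}^a$, apply Theorem~\ref{Taikov_for_multipliers_abst} with the evaluation functional $f_\xi$, collapse the $\xi$-dependence exactly via the addition formula $\sum_{l}\phi_{j,l}^2(\xi)=\nu_j$, and get finiteness of the constant from Lemma~\ref{Lemma_constant_finiteness} in dimension $a$ with exponents $2k_i+\tfrac{d-1}{2}$ and $2r_i^l$ (whence the hypothesis ${\bf k}+\tfrac d4\cdot{\bf 1}\in{\rm int}\,\mathcal{S}({\bf r}^0,\ldots,{\bf r}^m)$), combined with Remark~\ref{important_rem} for the non-balanced range. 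One caveat: your exact computation yields $\mathcal{C}_\xi=\sup_{\bf h}\prod_j h_j^{\lambda_j}\sum_{{\bf j}\in\mathbb{N}^a}\prod_i\gamma_{j_i}^{4k_i}\nu_{j_i}\big/\bigl(\sum_l h_l\prod_i\gamma_{j_i}^{4r_i^l}\bigr)$ with \emph{no} $2^a$ prefactor and the sum over $\mathbb{N}^a$, which is consistent with the univariate Theorem~\ref{Taikov_CROSS} (no factor $2$ there); the $2^a$ and the index set $\mathbb{N}^d$ in the paper's displayed constant thus appear to be typos, so your assertion that your formula ``is exactly the constant displayed in the statement'' holds only up to these discrepancies, with your version being the correct one.
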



From Theorem~\ref{Taikov_CROSS_mvt} we obtain the following consequence for $\mathcal{M} = \mathbb{T}$.

\begin{cor}
\label{Cor_4}
	Let $a\in\mathbb{N}$, $m\in\mathbb{N}$, ${\bf k},{\bf r}^0,\ldots, {\bf r}^m\in \mathbb{R}^a$ be such that $k_1\cdot\ldots\cdot k_a\ne 0$ and ${\bf k} + \frac{1}{2} \cdot{\bf 1}\in{\rm int}\,\mathcal{S}\left({\bf r}^0,\ldots,{\bf r}^m\right)$. Also, let $\lambda_0,\ldots,\lambda_m > 0$ be such that $\lambda_0 + \ldots + \lambda_m = 1$ and ${\bf k} + \frac{1}{2}\cdot{\bf 1} \leqslant \lambda_0 {\bf r}^0 + \ldots + \lambda_m {\bf r}^m$. Then, for every $x\in \bigcap\limits_{l=0}^m H_0^{{\bf r}^l}(\mathbb{T}^a)$, there holds true sharp inequality:
	\[
		\left\|{\bf D}^{\bf k}x\right\|_{C(\mathbb{T}^a)} \leqslant \sqrt{\mathcal{C}\cdot\prod\limits_{j=0}^m\lambda_j^{-\lambda_j}}\cdot\prod\limits_{j=0}^m \left\|{\bf D}^{{\bf r}^j}x\right\|_{L_2(\mathbb{T}^a)}^{\lambda_j},
	\]	
	where
	\[
		\mathcal{C} = \sup\limits_{{\bf h}\in \mathbb{R}^{m+1}_+} \prod\limits_{j=0}^mh_j^{\lambda_j}\sum\limits_{{\bf j}\in\mathbb{N}^d} \frac{|{\bf j}|^{2{\bf k}}}{\sum\limits_{l=0}^m h_l|{\bf j}|^{2{\bf r}^l}}.
	\]
\end{cor}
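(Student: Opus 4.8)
The plan is to derive Corollary~\ref{Cor_4} as the specialization of Theorem~\ref{Taikov_CROSS_mvt} to the circle, viewing $\mathbb{T} = \mathbb{S}^1$ as a one-dimensional CROSS. First I would record the CROSS parameters of $\mathbb{T}$ from the list preceding Theorem~\ref{Taikov_CROSS}: taking $b = 1$ in the sphere case gives $d = 1$, $\gamma_j^2 = j(j+b-1) = j^2$ so $\gamma_j = j$, and $\alpha = \beta = \frac{d-2}{2} = -\frac12$. Substituting $\alpha = \beta = -\frac12$ into $\rho_{j,\alpha,\beta}$ and simplifying the gamma factors through $\Gamma(j+1) = j\,\Gamma(j)$ together with the cancellation of $\Gamma(\tfrac12)$, I expect $\rho_{j,-1/2,-1/2} = 2$ for every $j \geqslant 1$; this is precisely the multiplicity two of the eigenvalue $j^2$ of $\Delta = -D^2$ on $\mathbb{T}$, realized by the pair $e^{\pm ijt}$.

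The second step is to pass from the powers ${\bf \Delta}^{\bullet}$ in Theorem~\ref{Taikov_CROSS_mvt} to the Weyl derivatives ${\bf D}^{\bullet}$ in the corollary. Because on $\mathbb{T}$ one has both the pointwise identity $|\langle f_\xi, D^k e^{int}\rangle| = |\langle f_\xi, \Delta^{k/2} e^{int}\rangle|$ and the norm identity $\|D^s x\|_{L_2(\mathbb{T})} = \|\Delta^{s/2} x\|_{L_2(\mathbb{T})}$, I would apply Theorem~\ref{Taikov_CROSS_mvt} with $\Delta$-exponents ${\bf K} = \tfrac12 {\bf k}$ and ${\bf R}^l = \tfrac12 {\bf r}^l$. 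Coordinatewise this converts ${\bf \Delta}^{\bf K}$ into ${\bf D}^{\bf k}$ in the $C(\mathbb{T}^a)$ norm on the left and each ${\bf \Delta}^{{\bf R}^l}$ into ${\bf D}^{{\bf r}^l}$ in the $L_2(\mathbb{T}^a)$ norms on the right, while the Sobolev domains match since $H_0^{2{\bf R}^l}(\mathbb{T}^a) = H_0^{{\bf r}^l}(\mathbb{T}^a)$. The hypotheses transform correctly as well: with $d = 1$, multiplying the relations ${\bf K} + \tfrac{d}{4}{\bf 1} \in {\rm int}\,\mathcal{S}({\bf R}^0,\ldots,{\bf R}^m)$ and ${\bf K} + \tfrac{d}{4}{\bf 1} \leqslant \lambda_0 {\bf R}^0 + \ldots + \lambda_m {\bf R}^m$ by $2$ yields exactly ${\bf k} + \tfrac12 {\bf 1} \in {\rm int}\,\mathcal{S}({\bf r}^0,\ldots,{\bf r}^m)$ and ${\bf k} + \tfrac12 {\bf 1} \leqslant \lambda_0 {\bf r}^0 + \ldots + \lambda_m {\bf r}^m$, which are the hypotheses of the corollary; in particular, finiteness of the constant is inherited from Lemma~\ref{Lemma_constant_finiteness} and Remark~\ref{important_rem} exactly as in the derivation of Theorem~\ref{Taikov_CROSS_mvt}.

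It then remains to reduce the constant. Inserting $\gamma_{j_i} = j_i$, $\rho_{j_i,\alpha,\beta} = 2$, $K_i = \tfrac12 k_i$ and $R^l_i = \tfrac12 r^l_i$ into the expression for $\mathcal{C}$ in Theorem~\ref{Taikov_CROSS_mvt} gives $\gamma_{j_1}^{4K_1}\cdots\gamma_{j_a}^{4K_a} = |{\bf j}|^{2{\bf k}}$ and $\gamma_{j_1}^{4R^l_1}\cdots\gamma_{j_a}^{4R^l_a} = |{\bf j}|^{2{\bf r}^l}$, so the ${\bf j}$-sum collapses to $\sum_{{\bf j}\in\mathbb{N}^a} |{\bf j}|^{2{\bf k}}\big/\sum_l h_l |{\bf j}|^{2{\bf r}^l}$, the multiplicities contributing the factor $\prod_i \rho_{j_i} = 2^a$. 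The condition $k_1\cdots k_a \neq 0$ is what secures the nondegeneracy conditions (A2) and (Af1) in every coordinate, hence the sharpness of the inequality. The main obstacle is purely the bookkeeping of this constant: verifying $\rho_{j,-1/2,-1/2} = 2$ from the gamma-function formula, halving the exponents correctly ($4K \to 2k$), and tracking the powers of two produced by the multiplicities so that the normalization stays consistent — a useful check being that setting $a = 1$ must reproduce the one-dimensional Corollary~\ref{Cor_3}.
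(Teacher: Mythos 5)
Your route is exactly the paper's: Corollary~\ref{Cor_4} is given there with no argument beyond the remark that it follows from Theorem~\ref{Taikov_CROSS_mvt} for $\mathcal{M}=\mathbb{T}$, and your specialization fills in precisely the intended details. The CROSS data are right (sphere case with $b=1$: $d=1$, $\gamma_j^2=j(j+b-1)=j^2$, $\alpha=\beta=-\tfrac12$, and the gamma factors in $\rho_{j,-1/2,-1/2}$ cancel via $\Gamma(j+1)=j\,\Gamma(j)$ to give $\rho_{j,-1/2,-1/2}=2$, the multiplicity of $e^{\pm ijt}$); the passage $D^s\leftrightarrow\Delta^{s/2}$ with halved exponents ${\bf K}=\tfrac12{\bf k}$, ${\bf R}^l=\tfrac12{\bf r}^l$ is the same device the paper uses before Corollary~\ref{Cor_3}; the hypotheses and domains transform as you say, with finiteness of the constant inherited through Lemma~\ref{Lemma_constant_finiteness} and Remark~\ref{important_rem}.

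The one substantive issue is the powers of two, which you correctly identify as the main obstacle but then leave unresolved --- and your own proposed check at $a=1$ would have exposed the problem. Theorem~\ref{Taikov_CROSS_mvt} as printed carries the prefactor $2^a$ \emph{in addition to} the multiplicity product $\rho_{j_1,\alpha,\beta}\cdots\rho_{j_a,\alpha,\beta}$, so a literal substitution of $\rho_{j_i}=2$ yields $4^a$ times the $\mathbb{N}^a$-sum; Corollary~\ref{Cor_4} prints no power of two at all; and Corollary~\ref{Cor_3} at $a=1$ prints a single factor $2$. A first-principles computation on $\mathbb{T}^a$ (orthonormal exponentials indexed by ${\bf n}\in\mathbb{Z}^a$ with all $n_i\ne 0$, folded onto $\mathbb{N}^a$ since $|{\bf n}|^{2{\bf k}}$ and $|{\bf n}|^{2{\bf r}^l}$ depend only on the $|n_i|$) shows the true constant is $2^a\sup\limits_{{\bf h}}\prod_j h_j^{\lambda_j}\sum_{{\bf j}\in\mathbb{N}^a}|{\bf j}|^{2{\bf k}}\big/\sum_l h_l|{\bf j}|^{2{\bf r}^l}$: the multiplicities are the \emph{only} source of powers of two, consistent with Corollary~\ref{Cor_3} and with the general formula in Theorem~\ref{Taikov_Riemannian_mvt} (which has no prefactor). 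In other words, the $2^a$ prefactor in Theorem~\ref{Taikov_CROSS_mvt} double-counts $\prod_i\rho_{j_i}$, and the constant printed in Corollary~\ref{Cor_4} is missing the factor $2^a$ (its index set $\mathbb{N}^d$ should also read $\mathbb{N}^a$). These are inconsistencies in the paper's printed statements rather than errors in your plan, but a complete proof along your lines must carry out the bookkeeping to the end and state the corrected constant, not merely assert that the specialization reproduces the corollary as printed.
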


Remark that Corollary~\ref{Cor_4} for two-dimensional torus, {\it i.e.} in the case $a=2$, was proved in~\cite{IlyTit_06}. In addition, boundedness of constant $\mathcal{C}$ also follows from Theorem~2b in~\cite{BusTih_79}. 


\subsection{Multiplicative inequalities for functions on $\mathbb{R}^d$}

We follow paper~\cite{BusTih_79} to introduce necessary notations. Let $d\in\mathbb{N}$, ${\bf k}\in \mathbb{R}^d$; $\mathcal{F}:L_2\left(\mathbb{R}^d\right)\to L_2\left(\mathbb{R}^d\right)$ be the Fourier transform mapping a function $x:\mathbb{R}^d\to\mathbb{C}$ into the function:
\[
	\widehat{x}({\bf u})=(\mathcal{F}x)({\bf u}) := \int_{\mathbb{R}^d}x({\bf t})e^{-2\pi i(t_1u_1+\ldots+t_du_d)}\,{\rm d}{\bf t},\qquad {\bf u}\in\mathbb{R}^d;
\] 
$\mathcal{F}^{-1}$ be the inverse Fourier transform mapping a function $x:\mathbb{R}^d\to\mathbb{C}$ into
\[
	\widetilde{x}({\bf t}) = \left(\mathcal{F}^{-1}x\right)({\bf t}) := \int_{\mathbb{R}^d}x({\bf u})e^{2\pi i(t_1u_1+\ldots+t_du_d)}\,{\rm d}{\bf u},\qquad{\bf t}\in\mathbb{R}^d;
\]
$\mathcal{E}^{\bf k}$ be the multiplication operator mapping a function $x:\mathbb{R}^d\to\mathbb{C}$ into the function $\left(\mathcal{E}^{\bf k}x\right)({\bf t}) =  (2\pi it_1)^{k_1}\!\ldots\!(2\pi it_d)^{k_d}x({\bf t})$, where $(2\pi ia)^b \!=\! |2\pi a|^be^{\frac{i\pi b\,{\rm sign}\,a}{2}}$; $\mathcal{D}^{\bf k} = \mathcal{F}^{-1}\circ\mathcal{E}^{\bf k}\circ \mathcal{F}$. Evidently, for ${\bf k}\in\mathbb{Z}^d_+$, $\mathcal{D}^{\bf k}$ is mixed partial differentiation operator of order ${\bf k}$. Denote by $\mathcal{L}^{\bf k}$ the Sobolev space of measurable functions $x:\mathbb{R}^d\to\mathbb{C}$ such that
\[
	\|x\|_{\mathcal{L}^{\bf k}}^2 := (2\pi)^{2k_1+\ldots+2k_d}\int_{\mathbb{R}^d}{\bf t}^{2{\bf k}}\left|\widehat{x}({\bf t})\right|^2\,{\rm d}{\bf t} < \infty.
\]
Obviously, $\mathcal{L}^{\bf 0} = L_2\left(\mathbb{R}^d\right)$. Operator $\mathcal{D}^{\bf k}$ is well defined on $\mathcal{L}^{\bf k}$ and by the Plancherel theorem $\left\|\mathcal{D}^{\bf k}x\right\|_{L_2(\mathbb{R}^d)} = \|x\|_{\mathcal{L}^{\bf k}}$ for every $x\in\mathcal{L}^{\bf k}$. 

Let $\ZZ_*^d = \left\{{\bf n}\in\ZZ^d\,:\,n_1\cdot\ldots\cdot n_d\ne 0\right\}$. For $\rho>0$, let $\left\{\mathfrak{c}_{{\bf n},\rho}\right\}_{{\bf n}\in\mathbb{Z}^d}$ be the set of cubes $\mathfrak{c}_{{\bf n},\rho} := [\rho n_1-\frac {\rho}2,\rho n_1+\frac{\rho}{2}]\times\ldots\times [\rho n_d - \frac{\rho}{2},\rho n_d+\frac{\rho}{2}]$, and by $L_{2;\rho}\left(\RR^d\right)$ we denote the space of compactly supported functions $x:\RR^d\to\mathbb{C}$ constant on cubes $\mathfrak{c}_{{\bf n},\rho}$, ${\bf n}\in \mathbb{Z}_*^d$, and vanishing on the cubes $\mathfrak{c}_{{\bf n},\rho}$ with ${\bf n}\in \mathbb{Z}^d\setminus\mathbb{Z}_*^d$. By $\mathcal{L}_{\rho}$ denote the space of the inverse Fourier transforms $\mathcal{F}^{-1}x$, $x\in L_{2;\rho}\left(\RR^d\right)$. It is clear that $\left\{\varphi_{{\bf n},\rho}\right\}_{{\bf n}\in\mathbb{Z}_*^d}$, where $\varphi_{{\bf n},\rho} = \rho^{-\frac{d}{2}}\cdot\mathcal{F}^{-1}\chi_{\mathfrak{c}_{{\bf n},\rho}}$ and $\chi_E$ is the indicator function of a measurable set $E\subset\mathbb{R}^d$, forms orthonormal basis in $\mathcal{L}_\rho$.

\subsubsection{Hardy-Littlewood-P\'olya type inequalities on $\mathbb{R}^d$}

Evidently, for ${\bf k}\in\mathbb{R}^d$,
\[
	\left\|\mathcal{D}^{\bf k}\varphi_{{\bf n},\rho}\right\|_{L_2(\mathbb{R}^d)} = (2\pi\rho)^{k_1+\ldots+k_d}\left(\int_{{\bf c}_{{\bf n},1}}|{\bf t}|^{2{\bf k}}\,{\rm d}{\bf t}\right)^{\frac 12}.
\]

It is not difficult to verify that operator $\mathcal{D}^{\bf k}:\mathcal{L}_\rho\to\mathcal{L}_\rho$ is self-adjoint and, hence, closed. Since $\mathcal{D}^{\bf k}$ satisfies conditions~(B1), (B2), (B4), (A1), (A3) and $\mathfrak{L}:=\bigcup\limits_{\rho > 0} \mathcal{L}_\rho$ is dense in $\mathcal{L}^{\bf k}$, we obtain the following corollary from Theorem~\ref{HLP_for_special_multipliers_abst}.

\begin{theorem}
\label{HLP_Cor}
	Let $d\in\mathbb{N}$, $m\in\mathbb{Z}_+$, $\rho>0$, ${\bf k},{\bf r}^0,\ldots, {\bf r}^m\!\in\! \mathbb{R}^d$ and $\lambda_0,\ldots,\!\lambda_m \!>\! 0$ be such that $\lambda_0 + \ldots + \lambda_m = 1$ and ${\bf k} = \lambda_0 {\bf r}^0 + \ldots + \lambda_m {\bf r}^m\in\mathcal{S}\left({\bf r}^0,\ldots,{\bf r}^m\right)$. Then, for every $x\in \bigcap\limits_{j=0}^m \mathcal{L}^{{\bf r}^j}$, there holds sharp inequality:
	\[
		\left\|\mathcal{D}^{\bf k}x\right\|_{L_2(\mathbb{R}^d)} \leqslant \prod\limits_{j=0}^m \left\|\mathcal{D}^{{\bf r}^j}x\right\|_{L_2(\mathbb{R}^d)}^{\lambda_j}.
	\]
\end{theorem}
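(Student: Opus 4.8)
The plan is to prove the inequality first on each finite-resolution space $\mathcal{L}_\rho$ and then to extend it to arbitrary $x\in\bigcap_{j=0}^m\mathcal{L}^{{\bf r}^j}$ by the density of $\mathfrak{L}=\bigcup_{\rho>0}\mathcal{L}_\rho$ together with the closedness of $\mathcal{D}^{\bf k}$. Fix $\rho>0$ and regard $\mathcal{L}_\rho$ as a Hilbert space with orthonormal basis $\{\varphi_{{\bf n},\rho}\}_{{\bf n}\in\mathbb{Z}_*^d}$, taking $H'=L_2(\mathbb{R}^d)$, $A=\mathcal{D}^{\bf k}$ and $B_j=\mathcal{D}^{{\bf r}^j}$ restricted to $\mathcal{L}_\rho$. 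Because $\widehat{\mathcal{D}^{{\bf r}^j}\varphi_{{\bf n},\rho}}$ is supported on the cube $\mathfrak{c}_{{\bf n},\rho}$ and distinct multi-indices give disjoint cubes, the vectors $\mathcal{D}^{{\bf r}^j}\varphi_{{\bf n}',\rho}$ and $\mathcal{D}^{{\bf r}^j}\varphi_{{\bf n}'',\rho}$ are orthogonal for ${\bf n}'\ne{\bf n}''$; thus condition~\eqref{multipliers_conditions_1} holds and the constant $\mathfrak{C}$ of Theorem~\ref{HLP_for_special_multipliers_abst} collapses to $\sup_{{\bf h}}\prod_{j}h_j^{\lambda_j}\,\widetilde{\sup\limits_{{\bf n}}}\,\|\mathcal{D}^{\bf k}\varphi_{{\bf n},\rho}\|_{L_2}^2/b_{{\bf n},{\bf h}}$.

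The crux --- and the one place where the situation genuinely differs from Theorem~\ref{HLP_for_special_multipliers} --- is that the norms $\|\mathcal{D}^{{\bf r}^j}\varphi_{{\bf n},\rho}\|_{L_2}=(2\pi\rho)^{r^j_1+\ldots+r^j_d}(\int_{\mathfrak{c}_{{\bf n},1}}|{\bf t}|^{2{\bf r}^j}\,{\rm d}{\bf t})^{1/2}$ are \emph{not} of the exact separable form $g_{\bf n}^{{\bf r}^j}$ demanded there, owing to the cube integral. They nevertheless satisfy the per-basis-element multiplicative inequality: using ${\bf k}=\sum_j\lambda_j{\bf r}^j$, the pointwise identity $|{\bf t}|^{2{\bf k}}=\prod_j(|{\bf t}|^{2{\bf r}^j})^{\lambda_j}$ and the generalized Hölder inequality with exponents $1/\lambda_j$ give $\int_{\mathfrak{c}_{{\bf n},1}}|{\bf t}|^{2{\bf k}}\,{\rm d}{\bf t}\leqslant\prod_j(\int_{\mathfrak{c}_{{\bf n},1}}|{\bf t}|^{2{\bf r}^j}\,{\rm d}{\bf t})^{\lambda_j}$, while the powers of $2\pi\rho$ match exactly; hence $\|\mathcal{D}^{\bf k}\varphi_{{\bf n},\rho}\|_{L_2}^2\leqslant\prod_j\|\mathcal{D}^{{\bf r}^j}\varphi_{{\bf n},\rho}\|_{L_2}^{2\lambda_j}$. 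Feeding this into the weighted AM--GM bound $b_{{\bf n},{\bf h}}=\sum_j h_j\|\mathcal{D}^{{\bf r}^j}\varphi_{{\bf n},\rho}\|_{L_2}^2\geqslant\prod_j(h_j/\lambda_j)^{\lambda_j}\prod_j\|\mathcal{D}^{{\bf r}^j}\varphi_{{\bf n},\rho}\|_{L_2}^{2\lambda_j}$ yields $\prod_j h_j^{\lambda_j}\,\|\mathcal{D}^{\bf k}\varphi_{{\bf n},\rho}\|_{L_2}^2/b_{{\bf n},{\bf h}}\leqslant\prod_j\lambda_j^{\lambda_j}$, exactly as in the proof of Theorem~\ref{HLP_for_special_multipliers}. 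Therefore $\mathfrak{C}\leqslant\prod_j\lambda_j^{\lambda_j}$, so $\sqrt{\mathfrak{C}\prod_j\lambda_j^{-\lambda_j}}\leqslant 1$ and Theorem~\ref{HLP_for_special_multipliers_abst} delivers the inequality for every $x\in\mathcal{L}_\rho$.

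To pass to general $x\in\bigcap_{j}\mathcal{L}^{{\bf r}^j}$, I would approximate in the Fourier variable: truncate $\widehat{x}$ to the region $\{{\bf u}:\varepsilon\leqslant|u_i|\leqslant\varepsilon^{-1},\ i=1,\ldots,d\}$ and then replace it by a step function constant on the cubes $\mathfrak{c}_{{\bf n},\rho}$ with $\rho\to0$. This produces $x_k\in\mathfrak{L}$ with $\mathcal{D}^{{\bf r}^j}x_k\to\mathcal{D}^{{\bf r}^j}x$ in $L_2$ for every $j$ simultaneously, since on the truncation region all weights $|{\bf u}|^{2{\bf r}^j}$ are bounded above and below and dominated convergence applies (the coordinate hyperplanes carry no mass). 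Passing to the limit in $\|\mathcal{D}^{\bf k}x_k\|_{L_2}\leqslant\prod_j\|\mathcal{D}^{{\bf r}^j}x_k\|_{L_2}^{\lambda_j}$, the right-hand side converges, so $\{\mathcal{D}^{\bf k}x_k\}$ is Cauchy in $L_2$, and the closedness (self-adjointness) of $\mathcal{D}^{\bf k}$ identifies its limit with $\mathcal{D}^{\bf k}x$, giving the inequality for $x$.

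For sharpness I would test single basis functions $\varphi_{{\bf n},\rho}$, for which the defect ratio factorizes as $\prod_{i=1}^d\big(\int_{n_i-1/2}^{n_i+1/2}|t|^{2k_i}\,{\rm d}t\big)\big/\prod_{i=1}^d\prod_{j}\big(\int_{n_i-1/2}^{n_i+1/2}|t|^{2r_i^j}\,{\rm d}t\big)^{\lambda_j}$. Since $\int_{n-1/2}^{n+1/2}|t|^{2s}\,{\rm d}t\sim|n|^{2s}$ as $|n|\to\infty$ for each real $s$, every factor tends to $|n_i|^{2k_i-2\sum_j\lambda_j r_i^j}=1$, so the ratio converges to $1$ as $\min_i|n_i|\to\infty$ and the constant $1$ cannot be lowered. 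The main obstacle is exactly this lack of \emph{equality} on individual basis elements: the cube integrals make Hölder strict, so --- in contrast to Theorem~\ref{HLP_for_special_multipliers}, where each $e_{\bf n}$ is extremal --- the inequality here is strict on every $\varphi_{{\bf n},\rho}$ and sharpness is recovered only asymptotically through high-frequency basis functions, where the integrands become nearly constant and Hölder degenerates into equality.
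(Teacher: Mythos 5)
Your proposal is correct and follows essentially the same route as the paper's proof: restrict to $\mathcal{L}_\rho$, verify the hypotheses of Theorem~\ref{HLP_for_special_multipliers_abst} (including the orthogonality condition~\eqref{multipliers_conditions_1} via disjoint Fourier supports), bound $\mathfrak{C}\leqslant\prod_{j=0}^m\lambda_j^{\lambda_j}$ by the weighted AM--GM and generalized H\"older inequalities applied to the cube integrals, and conclude for general $x$ by density of $\mathfrak{L}$. Your sharpness argument --- testing the basis functions $\varphi_{{\bf n},\rho}$ directly and letting $\min_i|n_i|\to\infty$ --- is the same computation (the ratio of cube integrals tending to $1$) that the paper uses to show $\mathfrak{C}=\prod_{j=0}^m\lambda_j^{\lambda_j}$, and your explicit truncation-plus-closedness limiting step merely spells out the density assertion the paper dispatches in one line.
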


\begin{proof}
    Indeed, by Theorem~\ref{HLP_for_special_multipliers_abst}, for every $x \in \mathcal{L}_\rho$, $\rho > 0$, 
    \[
        \left\|\mathcal{D}^{\bf k}x\right\|_{L_2(\mathbb{R}^d)} \leqslant \sqrt{\mathfrak{C}\cdot\prod\limits_{j=0}^m\lambda_j^{-\lambda_j}}\cdot \prod\limits_{j=0}^m \left\|\mathcal{D}^{{\bf r}^j}x\right\|_{L_2(\mathbb{R}^d)}^{\lambda_j},
    \]
    where
    \begin{gather*}
        \mathfrak{C} = \sup\limits_{{\bf h}\in\mathbb{R}^{m+1}_+}\prod\limits_{j=0}^{m}h_j^{\lambda_j}\sup\limits_{{\bf n}\in \mathbb{Z}_*^d}\frac{\left\|\mathcal{D}^{\bf k}\varphi_{{\bf n},\rho}\right\|^2_{L_2(\mathbb{R}^d)}}{\sum\limits_{j=0}^mh_j\left\|\mathcal{D}^{{\bf r}^j}\varphi_{{\bf n},\rho}\right\|^2_{L_2(\mathbb{R}^d)}} \\ 
        = \sup\limits_{{\bf h}\in\mathbb{R}^{m+1}_+}\prod\limits_{j=0}^{m}h_j^{\lambda_j}\sup\limits_{{\bf n}\in \mathbb{Z}_*^d}\frac{\left\|\mathcal{D}^{\bf k}\varphi_{{\bf n},1}\right\|^2_{L_2(\mathbb{R}^d)}}{\sum\limits_{j=0}^mh_j\left\|\mathcal{D}^{{\bf r}^j}\varphi_{{\bf n},1}\right\|^2_{L_2(\mathbb{R}^d)}}.
    \end{gather*}
    Let us show that $\mathfrak{C} = \prod\limits_{j=0}^m\lambda_j^{\lambda_j}$. By the weighted AM-GM inequality and the generalized H\"older inequality, 
    \begin{gather*}
	    \mathfrak{C}\leqslant \prod\limits_{j=0}^m\lambda_j^{\lambda_j} \cdot\sup\limits_{{\bf n}\in \mathbb{Z}^d_*}\frac{\left\|\mathcal{D}^{\bf k}\varphi_{{\bf n},1}\right\|^2_{L_2(\mathbb{R}^d)}}{\prod\limits_{j=0}^m\left\|\mathcal{D}^{{\bf r}^j}\varphi_{{\bf n},1}\right\|^{2\lambda_j}_{L_2(\mathbb{R}^d)}} \\ = \prod\limits_{j=0}^m\lambda_j^{\lambda_j}\cdot \sup\limits_{{\bf n}\in \mathbb{Z}_*^d}\frac{\int_{{\bf c}_{{\bf n},1}}|{\bf t}|^{2{\bf k}}\,{\rm d}{\bf t}}{\prod\limits_{j=0}^m\left(\int_{{\bf c}_{{\bf n},1}}|{\bf t}|^{2{\bf r}^j}\,{\rm d}{\bf t}\right)^{\lambda_j}}\leqslant \prod\limits_{j=0}^m\lambda_j^{\lambda_j}.
    \end{gather*}
    Since
    \[
	    \frac{\int_{{\bf c}_{{\bf n},1}}|{\bf t}|^{2{\bf k}}\,{\rm d}{\bf t}}{\prod\limits_{j=0}^m\left(\int_{{\bf c}_{{\bf n},1}}|{\bf t}|^{2{\bf r}^j}\,{\rm d}{\bf t}\right)^{\lambda_j}}\to 1,\qquad\text{as}\quad \min{\{n_1,\ldots,n_d\}}\to\infty,
    \]
    we conclude that $\mathfrak{C} = \prod\limits_{j=0}^m\lambda_j^{\lambda_j}$. Observing that $\mathfrak{L}$ is dense in $\mathcal{L}^{\bf k}$ and $\mathcal{L}^{{\bf r}^j}$, $j=0,1,\ldots,m$, we finish the proof.
\end{proof}



Remark that Theorem~\ref{HLP_Cor} contains the classical Hardy-Littlewood-P\'olya inequality~\cite{HarLitPol_34} (case $d=1$, $m=1$ and ${\bf r}^0 = {\bf 0}$). 

\subsubsection{Taikov type inequalities on $\mathbb{R}^d$}

Consider the functional $f_0:C(\mathbb{R}^d)\to\mathbb{C}$ mapping a function $x\in C(\mathbb{R}^d)$ into its value $x({\bf 0})$ at the point ${\bf 0}$. Observe that
\[
	\left|\left<f_{0},\mathcal{D}^{\bf k}\varphi_{{\bf n},\rho}\right>\right| = (2\pi)^{k_1+\ldots+k_d}\rho^{k_1+\ldots+k_d+\frac{d}{2}}\int_{\mathfrak{c}_{{\bf n},1}}|{\bf t}|^{\bf k}\,{\rm d}{\bf t}.
\]
Assume ${\bf k},{\bf r}^0,\ldots,{\bf r}^m\in\mathbb{R}^d$ be such that ${\bf k} + \frac 12\cdot{\bf 1} \in{\rm int}\,\mathcal{S}\left({\bf r}^0,\ldots,{\bf r}^m\right)$. Then the following embedding is well-known (see {\it e.g.}~\cite{BusTih_79}): $C^{\bf k}(\mathbb{R}^d)\subset \bigcap\limits_{j=0}^m \mathcal{L}^{{\bf r}^j}$. Also, it is not difficult to verify that $\mathfrak{L}$ is dense in $C^{\bf k}\left(\mathbb{R}^d\right)$. 
Hence, using intermediate approximation by space $\mathfrak{L}$, and applying Theorem~\ref{Taikov_for_multipliers_abst} and Lemma~\ref{Lemma_constant_finiteness} we obtain the following result.

\begin{theorem}
\label{Taikov_R_mvt}
	Let $d,m\in\mathbb{N}$, ${\bf k},{\bf r}^0,\ldots, {\bf r}^m\in \mathbb{R}^d$ and $\lambda_0,\ldots,\lambda_m > 0$ be such that $\lambda_0 + \ldots + \lambda_m = 1$ and ${\bf k} + \frac{1}{2}\cdot{\bf 1} = \lambda_0 {\bf r}^0 + \ldots + \lambda_m {\bf r}^m\in{\rm int}\,\mathcal{S}\left({\bf r}^0,\ldots,{\bf r}^m\right)$. Then, for every $x\in \bigcap\limits_{l=0}^m \mathcal{L}^{{\bf r}^l}$, there holds true sharp inequality:
	\[
		\left\|\mathcal{D}^{\bf k}x\right\|_{C(\mathbb{R}^d)} \leqslant \sqrt{\mathcal{C}\cdot\prod\limits_{j=0}^m\lambda_j^{-\lambda_j}}\cdot\prod\limits_{j=0}^m \left\|\mathcal{D}^{{\bf r}^j}x\right\|_{L_2(\mathbb{R}^d)}^{\lambda_j},
	\]	
	where
	\begin{gather*}
		\mathcal{C} = \liminf\limits_{\rho \to 0^+}\sup\limits_{{\bf h}\in \mathbb{R}^{m+1}_+} \prod\limits_{j=0}^mh_j^{\lambda_j}\sum\limits_{{\bf n}\in\mathbb{Z}_*^d} \frac{\rho^{d}\left(\int_{\mathfrak{c}_{{\bf n},1}} (2\pi \rho |{\bf t}|)^{\bf k}\,{\rm d}{\bf t}\right)^2}{\sum\limits_{l=0}^m h_l \int_{\mathfrak{c}_{{\bf n},1}} (2\pi \rho |{\bf t}|)^{2{\bf r}^l}\,{\rm d}{\bf t}} \\
		= \frac{1}{\pi^d}\sup\limits_{{\bf h}\in\mathbb{R}^{m+1}_+}\prod\limits_{j=0}^mh_j^{\lambda_j}\int_{\mathbb{R}^d_+}\frac{{\bf t}^{2{\bf k}}\,{\rm d\bf t}}{\sum\limits_{l=0}^mh_l{\bf t}^{2{\bf r}^l}}. 
	\end{gather*}
\end{theorem}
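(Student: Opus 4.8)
The plan is to transfer the abstract multiplicative Taikov inequality of Theorem~\ref{Taikov_for_multipliers_abst} onto each finite-mesh space $\mathcal{L}_\rho$ and then to pass to the limit $\rho\to0^+$ together with the density of $\mathfrak{L}=\bigcup_{\rho>0}\mathcal{L}_\rho$ in $C^{\bf k}(\mathbb{R}^d)$. On a fixed $\mathcal{L}_\rho$ I would take $H=H'=\mathcal{L}_\rho$ with orthonormal basis $\{\varphi_{{\bf n},\rho}\}_{{\bf n}\in\mathbb{Z}_*^d}$, the operators $B_l=\mathcal{D}^{{\bf r}^l}$ and the intermediate operator $A=\mathcal{D}^{\bf k}$, all of which are self-adjoint (hence closed) and act diagonally in the Fourier picture, so that conditions (B1)--(B4) and (A1)--(A3) hold; the functional is $f_0$, evaluation at the origin, which satisfies (Af1) since $\langle f_0,\mathcal{D}^{\bf k}\varphi_{{\bf n},\rho}\rangle\neq0$ for ${\bf n}\in\mathbb{Z}_*^d$, and (Af2) since $f_0$ is continuous on $C^{\bf k}(\mathbb{R}^d)\supset\mathfrak{L}$. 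Because the pointwise bound $|\mathcal{D}^{\bf k}x(\xi)|\leqslant(2\pi)^{k_1+\ldots+k_d}\int_{\mathbb{R}^d}|{\bf u}|^{\bf k}|\widehat{x}({\bf u})|\,{\rm d}{\bf u}$ is uniform in $\xi$ and is attained at $\xi={\bf 0}$ for the nonnegative-transform extremal functions, controlling $f_0$ is enough to control the full sup-norm $\|\mathcal{D}^{\bf k}x\|_{C(\mathbb{R}^d)}$, which is why no $\sup_\xi$ appears in the final constant.

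With these identifications Theorem~\ref{Taikov_for_multipliers_abst} yields, for $x\in\mathcal{L}_\rho$, the inequality $\|\mathcal{D}^{\bf k}x\|_{C(\mathbb{R}^d)}\leqslant\sqrt{\mathcal{C}_\rho\prod_{j=0}^m\lambda_j^{-\lambda_j}}\prod_{j=0}^m\|\mathcal{D}^{{\bf r}^j}x\|_{L_2(\mathbb{R}^d)}^{\lambda_j}$, where $\mathcal{C}_\rho$ is exactly the supremum displayed inside the $\liminf$ once the computed values of $\langle f_0,\mathcal{D}^{\bf k}\varphi_{{\bf n},\rho}\rangle$ and $\|\mathcal{D}^{{\bf r}^l}\varphi_{{\bf n},\rho}\|_{L_2}$ are substituted into \eqref{condition_constant_new_abst}. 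To guarantee $\mathcal{C}_\rho<\infty$ I would invoke Lemma~\ref{Lemma_constant_finiteness}: the estimates $|\langle f_0,\mathcal{D}^{\bf k}\varphi_{{\bf n},\rho}\rangle|\asymp|{\bf n}|^{\bf k}$ and $\|\mathcal{D}^{{\bf r}^l}\varphi_{{\bf n},\rho}\|_{L_2}\asymp|{\bf n}|^{{\bf r}^l}$ place us precisely in the setting \eqref{multipliers_cond_01}--\eqref{multipliers_cond_02} with bounded $\{\alpha_{\bf n}\}$, and the hypothesis ${\bf k}+\tfrac12\cdot{\bf 1}=\lambda_0{\bf r}^0+\ldots+\lambda_m{\bf r}^m\in{\rm int}\,\mathcal{S}({\bf r}^0,\ldots,{\bf r}^m)$ is exactly the condition required there.

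The analytic core, which I expect to be the main obstacle, is the identification of $\liminf_{\rho\to0^+}\mathcal{C}_\rho$ with $\frac{1}{\pi^d}\sup_{\bf h}\prod_jh_j^{\lambda_j}\int_{\mathbb{R}_+^d}{\bf t}^{2{\bf k}}(\sum_lh_l{\bf t}^{2{\bf r}^l})^{-1}{\rm d}{\bf t}$. Changing variables ${\bf u}=\rho{\bf t}$ in each cube integral rewrites the summand as the value of the integrand $|{\bf s}|^{2{\bf k}}(\sum_lh_l|{\bf s}|^{2{\bf r}^l})^{-1}$ at the node ${\bf s}=2\pi\rho{\bf n}$ times the mesh-cell volume $(2\pi\rho)^d$, up to the error incurred in replacing the cell integral by its central value; summing over ${\bf n}\in\mathbb{Z}_*^d$ then exhibits $\mathcal{C}_\rho$ as a Riemann sum converging to $(2\pi)^{-d}$ times the integral over $\mathbb{R}^d$, i.e.\ to $\pi^{-d}$ times the integral over $\mathbb{R}_+^d$ by evenness of the integrand. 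The delicate points are that the interchange of $\sup_{\bf h}$ with $\lim_\rho$ is safe only in one direction without extra work (which is precisely why the statement is phrased with a $\liminf$), and that the integrand degenerates near the coordinate hyperplanes and at $0$ and $\infty$, so the Riemann-sum convergence must be made uniform in ${\bf h}$ using the integrability supplied by ${\bf k}+\tfrac12\cdot{\bf 1}\in{\rm int}\,\mathcal{S}$, in the same spirit as the elementary tail bounds at the end of the proof of Lemma~\ref{Lemma_constant_finiteness}.

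It remains to assemble the pieces. Since $\mathfrak{L}$ is dense in $C^{\bf k}(\mathbb{R}^d)$, $C^{\bf k}(\mathbb{R}^d)\subset\bigcap_{l=0}^m\mathcal{L}^{{\bf r}^l}$, and the maps $x\mapsto\|\mathcal{D}^{\bf k}x\|_{C(\mathbb{R}^d)}$ and $x\mapsto\|\mathcal{D}^{{\bf r}^j}x\|_{L_2(\mathbb{R}^d)}$ are continuous under $C^{\bf k}$-approximation, the inequality established on every $\mathcal{L}_\rho$ passes to all $x\in\bigcap_{l}\mathcal{L}^{{\bf r}^l}$ with the limiting constant $\mathcal{C}$. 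Sharpness is then inherited essentially for free: the extremal sequences $x^\ast_{{\bf h},N}\in\mathcal{L}_\rho$ furnished by the sharpness part of Theorem~\ref{Taikov_for_multipliers_abst} already lie in $\bigcap_l\mathcal{L}^{{\bf r}^l}$ and drive the quotient to $\sqrt{\mathcal{C}\prod_j\lambda_j^{-\lambda_j}}$ once ${\bf h}$ runs along a sequence maximizing the integral, so letting $\rho\to0^+$ and optimizing ${\bf h}$ simultaneously produces the required extremal family.
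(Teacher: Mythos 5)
Your architecture is the paper's own: the same Hilbert-space realization on $\mathcal{L}_\rho$ with basis $\left\{\varphi_{{\bf n},\rho}\right\}$, the choices $A=\mathcal{D}^{\bf k}$, $B_l=\mathcal{D}^{{\bf r}^l}$ and $f_0$, finiteness of the per-mesh constant from Lemma~\ref{Lemma_constant_finiteness}, extension by density of $\mathfrak{L}$, and sharpness inherited from the extremal elements of Theorem~\ref{Taikov_for_multipliers_abst} (your remark that these have nonnegative Fourier data, so $\left\|\mathcal{D}^{\bf k}x^\ast\right\|_{C(\mathbb{R}^d)}$ is attained at the origin, is the correct bridge from $f_0$ to the uniform norm). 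The genuine gap sits exactly where you locate the ``analytic core'': your mechanism for identifying $\liminf_{\rho\to0^+}\mathcal{C}_\rho$ with the integral expression — central-value replacement plus Riemann-sum convergence ``made uniform in ${\bf h}$'' — would fail. First, the cells never become fine in the relevant multiplicative sense: the endpoint ratio of $[\rho(n_j-\frac12),\,\rho(n_j+\frac12)]$ is $(n_j+\frac12)/(n_j-\frac12)$, independent of $\rho$, so cells with bounded $n_j$ carry an $O(1)$ relative error at every mesh. Second, and fatally, set $\tilde h_l:=h_l(2\pi\rho)^{2(r^l_1+\ldots+r^l_d)}$; since $\sum_{j}\lambda_j\left(r^j_1+\ldots+r^j_d\right)=k_1+\ldots+k_d+\frac d2$, the ${\bf h}$-dependent expression under the supremum at mesh $\rho$ equals
\[
	\frac1{(2\pi)^d}\,\prod\limits_{j=0}^m\tilde h_j^{\lambda_j}\sum\limits_{{\bf n}\in\mathbb{Z}_*^d}\frac{\left(\int_{\mathfrak{c}_{{\bf n},1}}|{\bf t}|^{\bf k}\,{\rm d}{\bf t}\right)^2}{\sum\limits_{l=0}^m\tilde h_l\int_{\mathfrak{c}_{{\bf n},1}}|{\bf t}|^{2{\bf r}^l}\,{\rm d}{\bf t}},
\]
while the integral expression is invariant under the same substitution ${\bf h}\mapsto\tilde{\bf h}$ (change variables ${\bf t}\mapsto{\bf t}/(2\pi\rho)$). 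Consequently $\sup_{\bf h}$ of the discrete expression does not depend on $\rho$ at all, whereas at each fixed ${\bf h}$ the discrete sum is strictly below the integral (per-cell Cauchy--Schwarz is strict, since $|{\bf t}|^{\bf k}$ cannot be proportional to $\sum_l h_l|{\bf t}|^{2{\bf r}^l}$ on a set of positive measure), and along the curves ${\bf h}(\rho)$ with $\tilde{\bf h}$ fixed the defect is a positive constant. So the sum-vs-integral discrepancy does not tend to zero uniformly in ${\bf h}$: the supremum is approached only along ${\bf h}$-families degenerating as $\rho\to0^+$, and no uniform Riemann-sum statement can close the unsafe direction of your sup/lim interchange. (This also shows the $\liminf$ is not what rescues the interchange, contrary to your parenthetical.)

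The missing idea, which is how the paper proceeds, is a one-sided bound that is exact and needs no limit: by the Cauchy--Schwarz inequality with weight $w({\bf t})=\sum_l h_l|{\bf t}|^{2{\bf r}^l}$, on every cell
\[
	\left(\int_{\mathfrak{c}_{{\bf n},\rho}}|{\bf t}|^{\bf k}\,{\rm d}{\bf t}\right)^2\leqslant\int_{\mathfrak{c}_{{\bf n},\rho}}\frac{{\bf t}^{2{\bf k}}\,{\rm d}{\bf t}}{\sum\limits_{l=0}^m h_l|{\bf t}|^{2{\bf r}^l}}\cdot\int_{\mathfrak{c}_{{\bf n},\rho}}\sum\limits_{l=0}^m h_l|{\bf t}|^{2{\bf r}^l}\,{\rm d}{\bf t},
\]
so each discrete term is dominated by the integral of the limiting integrand over the same cell, for all $\rho$ and all ${\bf h}$ simultaneously (this is~\eqref{upper_estimate_C}), and the difficult inequality $\liminf_\rho\sup_{\bf h}\leqslant\sup_{\bf h}$ comes for free. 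The matching lower bound is then proved at a single fixed ${\bf h}$, in exactly the tail-bound spirit you invoke: discard the cells meeting the region where some coordinate is at most $\mu$ using the integrability supplied by ${\bf k}+\frac12\cdot{\bf 1}\in{\rm int}\,\mathcal{S}\left({\bf r}^0,\ldots,{\bf r}^m\right)$, and on the remaining cells compare $\int_{\mathfrak{c}_{{\bf n},1}}{\bf t}^{\bf k}\,{\rm d}{\bf t}$ with $(1-\varepsilon){\bf y}^{\bf k}$, evaluating the denominator at a mean-value point ${\bf y}_{{\bf n},\rho}$. With your uniform-convergence step replaced by this per-cell estimate, the remainder of your proposal goes through and coincides with the paper's proof.
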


\begin{proof}
    It only remains to prove integral representation of constant $\mathcal{C}$. Indeed, let ${\bf h}\in\mathbb{R}^{m+1}_+$ be fixed. By the Cauchy-Schwarz inequality we have
    \begin{equation}
    \label{upper_estimate_C}
        \sum\limits_{{\bf n}\in\mathbb{N}^d} \frac{\left(\int_{\mathfrak{c}_{{\bf n},\rho}} |{\bf t}|^{\bf k}\,{\rm d\bf t}\right)^2}{\int_{\mathfrak{c}_{{\bf n},\rho}} \sum\limits_{l=0}^m h_l|{\bf t}|^{2{\bf r}^l}\,{\rm d\bf t}} \leqslant \sum\limits_{{\bf n}\in\mathbb{N}^d}\int_{\mathfrak{c}_{{\bf n},\rho}} \frac{{\bf t}^{2\bf k}\,{\rm d\bf t}}{\sum\limits_{l=0}^m h_l{\bf t}^{2{\bf r}^l}} \leqslant \int_{\mathbb{R}_+^d} \frac{{\bf t}^{2\bf k}\,{\rm d\bf t}}{\sum\limits_{l=0}^m h_l{\bf t}^{2{\bf r}^l}}.
    \end{equation}
    On the other hand, for every $\varepsilon > 0$ it is not difficult to show that there exists $\mu > 0$ and $N > 0$ satisfying properties: 
    \[
        \int_{\mathbb{R}^d_+\setminus [\mu,+\infty)^d}\frac{{\bf t}^{2{\bf k}}\,{\rm d\bf t}}{\sum\limits_{l=0}^mh_l{\bf t}^{2{\bf r}^l}} < \varepsilon,
    \]
    and, for every ${\bf n}\in\mathbb{N}^d$, such that $n_j > N$, $j=1,\ldots,d$, and every ${\bf y}\in\mathfrak{c}_{{\bf n},1}$,
    \[
        \int_{\mathfrak{c}_{{\bf n},1}}{\bf t}^{{\bf k}}\,{\rm d\bf t} \geqslant (1-\varepsilon) {\bf y}^{\bf k}.
    \]
    Now, let ${\bf y}_{{\bf n},\rho}\in\mathfrak{c}_{{\bf n},\rho}$ be the point at which the function $\sum\limits_{l=0}^m h_l\left|{\bf t}\right|^{2{\bf r}^l}$ attains its average value on the cube $\mathfrak{c}_{{\bf n},\rho}$. Then, for every sufficiently small $\rho > 0$,
    \begin{gather*}
    \label{lower_estimate_C}
        \sum\limits_{{\bf n}\in\mathbb{N}^d} \frac{\left(\int_{\mathfrak{c}_{{\bf n},\rho}} |{\bf t}|^{\bf k}\,{\rm d\bf t}\right)^2}{\int_{\mathfrak{c}_{{\bf n},\rho}} \sum\limits_{l=0}^m h_l|{\bf t}|^{2{\bf r}^l}\,{\rm d\bf t}} \geqslant (1-\varepsilon)^2\sum\limits_{{\bf n}\in\mathbb{N}^d:\atop n_j >\frac{\mu}{\rho} - \frac{1}{2}} \frac{\rho^d {\bf y}_{{\bf n},\rho}^{2{\bf k}}}{\sum\limits_{l=0}^m h_l{\bf y}_{{\bf n},\rho}^{2{\bf r}^l}} \\ \geqslant (1-\varepsilon)^3\int_{\mathbb{R}^d_+} \frac{{\bf t}^{2{\bf k}}\,{\rm d}{\bf t}}{\sum\limits_{l=0}^m h_l {\bf t}^{2{\bf r}^l}} - \varepsilon.
    \end{gather*}
    Hence, taking limit when $\rho\to 0^+$ and accounting for arbitrariness of $\varepsilon > 0$, we conclude from inequalities~\eqref{upper_estimate_C} and~\eqref{lower_estimate_C} that
    \[
        \mathcal{C} = \frac{1}{\pi^d}\sup\limits_{{\bf h}\in\mathbb{R}^{m+1}_+}\prod\limits_{j=0}^mh_j^{\lambda_j}\int_{\mathbb{R}^d_+}\frac{{\bf t}^{2{\bf k}}\,{\rm d\bf t}}{\sum\limits_{l=0}^mh_l{\bf t}^{2{\bf r}^l}},
    \]
    which finishes the proof.
\end{proof}

\begin{rem}
    In case $m=d$, for every ${\bf h}\in \mathbb{R}^{d+1}_+$ there exist ${\bf g} \in\mathbb{R}^{d}_+$ such that $\frac{h_j}{h_0} = {\bf g}^{2{\bf r}^m}$. Then altering variables ${\bf u} = {\bf g}{\bf t}$ in the integral representation of constant $\mathcal{C}$ in Theorem~\ref{Taikov_R_mvt}, we obtain
    \[
        \mathcal{C} = \int_{\mathbb{R}_+^d} \frac{{\bf u}^{2\bf k}\,{\rm d\bf u}}{\sum\limits_{l=0}^m{\bf u}^{2{\bf r}^l}}.
    \]
\end{rem}

L.\,V.~Taikov~\cite{Tai_68} first obtained Theorem~\ref{Taikov_R_mvt} in case $d=m=1$ and ${\bf r}^0 = {\bf 0}$ with explicit expression for constant $\mathcal{C}$. Theorem~\ref{Taikov_R_mvt} in case $m=d$ was established by A.\,P.~Buslaev and V.\,M.~Tihomirov (see~\cite[Theorem~3a]{BusTih_79}), and for every $m$ -- by A.\,A.~Ilyin~\cite{Ily_98}. Also, A.\,A.~Lunev~\cite{Lun_09} found explicit expressions for $\mathcal{C}$ for some specific values of ${\bf r}^0, \ldots, {\bf r}^m$. 


\section{Solyar-type inequalities}
\label{Sec:Solyar}

In this section we establish sharp Solyar-type inequality for closed unbounded operators acting into Hilbert space and having closed range. As a corollary we will prove sharp Solyar-type inequality for the powers of the Laplace-Beltrami operators on compact Riemannian manifolds and, in particular, obtain the result of paper~\cite{Sol_76}. More information and results on Solyar-type inequalities for the norms of derivatives can be found in~\cite{Sol_76,BabKofPich_96,BKKP,BabSam_03}. 

Let $X$ be normed space over $\mathbb{C}$, $H$ be the Hilbert space over $\mathbb{C}$ with scalar product $(\cdot,\cdot)$, $X^*$ be the dual space of $X$. Denote the value of functional $x^*\in X^*$ on the element $x\in X$ by $\left<x^*,x\right>$. Let $A:X\to H$ be the linear operator with dense domain $\mathcal{D}(A)\subset X$, and $A^*:H\to X^*$ be its dual. The following sharp inequality for bounded operators $A$ was proved in~\cite{BabSam_03}.

\begin{prop}
\label{Solyar_bounded}
    If $A$ is bounded then, for every $x\in X$,
    \[
        \|Ax\|_H^2 \leqslant \|x\|_X\|A^*Ax\|_{X^*}.
    \]
    This inequality is sharp in the following sense
    \[
        \sup\limits_{x\not\in \ker{A^*A}} \frac{\|Ax\|_H^2}{\|x\|_X\|A^*Ax\|_{X^*}} = 1.
    \]
\end{prop}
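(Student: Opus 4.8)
The plan is to reduce both claims to the single identity
\[
	\|Ax\|_H^2 = \left<A^*Ax, x\right>,
\]
which comes straight from the definition of the dual operator: for $y\in H$ and $x\in X$ one has $\left<A^*y, x\right> = (Ax, y)_H$, so taking $y = Ax$ gives $(Ax, Ax)_H = \|Ax\|_H^2$. Note that the right-hand side $\left<A^*Ax, x\right>$ is therefore real and nonnegative, even over $\mathbb{C}$.

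First I would establish the inequality. Applying the defining estimate for the duality pairing, $\left|\left<\phi, x\right>\right|\leqslant \|\phi\|_{X^*}\|x\|_X$, to the functional $\phi = A^*Ax\in X^*$ and using that the left-hand side of the identity is nonnegative, I get
\[
	\|Ax\|_H^2 = \left<A^*Ax, x\right> \leqslant \|A^*Ax\|_{X^*}\,\|x\|_X,
\]
which is precisely the claimed bound. In particular the supremum in the sharpness statement is at most $1$.

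For the reverse estimate I would bound the denominator from above. By the definition of the dual norm together with the Cauchy-Schwarz inequality in $H$,
\[
	\|A^*Ax\|_{X^*} = \sup\limits_{\|z\|_X\leqslant 1}\left|(Az, Ax)_H\right| \leqslant \|A\|\cdot\|Ax\|_H .
\]
Substituting this into the ratio yields, for every $x\notin\ker A^*A$,
\[
	\frac{\|Ax\|_H^2}{\|x\|_X\,\|A^*Ax\|_{X^*}} \geqslant \frac{\|Ax\|_H}{\|A\|\cdot\|x\|_X}.
\]
It then suffices to take a maximizing sequence $\left\{x_n\right\}$ for the operator norm, i.e.\ $\|Ax_n\|_H/\|x_n\|_X \to \|A\|$; along it the right-hand side tends to $1$, and since $\left<A^*Ax_n, x_n\right> = \|Ax_n\|_H^2 > 0$ for large $n$ guarantees $x_n\notin\ker A^*A$, the supremum equals $1$.

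The routine parts are both inequalities; the step I expect to require the most care is the reverse (sharpness) estimate, where one must bound the denominator by $\|A\|\cdot\|Ax\|_H$ and then invoke a maximizing sequence for $\|A\|$. A minor technical point is the treatment of conjugation in the complex case, namely checking that the defining relation for $A^*$ genuinely produces $(Ax, Ax)_H$ rather than its conjugate; this is harmless since the inner product of a vector with itself is real. The argument also tacitly assumes $A\neq 0$, so that $\|A\| > 0$ and the index set $\left\{x : x\notin\ker A^*A\right\}$ is nonempty.
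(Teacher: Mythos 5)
Your proof is correct. Note first that the paper itself does not prove Proposition~1: it is quoted from the reference [BabSam\_03] (``was proved in\ldots''), so there is no in-paper argument to match against; what can be compared is the proof of Theorem~\ref{Solyar_unbounded}, whose first half uses exactly your identity $\|Ax\|_H^2=(Ax,Ax)=\left<A^*Ax,x\right>\leqslant\|x\|_X\|A^*Ax\|_{X^*}$, so your inequality step coincides with the paper's computation. Your sharpness argument is the genuinely independent part, and it is sound: the bound $\|A^*Ax\|_{X^*}=\sup_{\|z\|_X\leqslant 1}\left|(Az,Ax)_H\right|\leqslant\|A\|\,\|Ax\|_H$ via Cauchy--Schwarz reduces the ratio to $\|Ax\|_H/(\|A\|\,\|x\|_X)$, and a norm-maximizing sequence for $\|A\|$ (which always exists, even though the norm need not be attained) drives this to $1$; your verification that such $x_n$ eventually lie outside $\ker A^*A$ is exactly right, since $\left<A^*Ax_n,x_n\right>=\|Ax_n\|_H^2>0$ forces $A^*Ax_n\neq 0$. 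The same two-line observation shows $\ker A^*A=\ker A$, which reconciles the supremum in Proposition~1 with the one in Theorem~\ref{Solyar_unbounded}. Your housekeeping is also in order: the pairing $\left<A^*Ax,x\right>$ is real and nonnegative regardless of the (conjugate-linear) identification $H^*\cong H$, the denominator is strictly positive for admissible $x$, and the tacit assumption $A\neq 0$ is harmless because otherwise the supremum ranges over the empty set and the statement is vacuous. In short, you have supplied a complete elementary proof of a result the paper leaves to an external reference, and your argument buys a small extra: it shows that near-maximizers of the operator norm are asymptotically extremal in the Solyar inequality.
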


Although Proposition~\ref{Solyar_bounded} was proved originally for spaces $X$ and $H$ over field $\mathbb{R}$, its proof also remains true for spaces $X$ and $H$ over $\mathbb{C}$.

The following result generalizes Proposition~\ref{Solyar_bounded} for unbounded operators. 

\begin{theorem}
\label{Solyar_unbounded}
    Let $X$ be Banach space and $A$ be closed operator with closed range. Then, for every $x\in\mathcal{D}\left(A^*A\right)$, 
    \begin{equation}
    \label{Solyar_unbounded_inequality}
        \|Ax\|_H^2 \leqslant \|x\|_X\|A^*Ax\|_{X^*}.
    \end{equation}
    This inequality is sharp in the following sense
    \[
        \sup\limits_{x\not\in\ker{A}} \frac{\|Ax\|_H^2}{\|x\|_X\|A^*Ax\|_{X^*}} = 1.
    \]
\end{theorem}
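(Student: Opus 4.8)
The plan is to separate the two assertions. The inequality \eqref{Solyar_unbounded_inequality} rests entirely on the identity $\|Ax\|_H^2=\langle A^*Ax,x\rangle$: if $x\in\mathcal{D}(A^*A)$ then $Ax\in\mathcal{D}(A^*)$, and the defining relation $\langle A^*y,x\rangle=(y,Ax)_H$ (for $y\in\mathcal{D}(A^*)$, $x\in\mathcal{D}(A)$) applied to $y=Ax$ gives $\langle A^*Ax,x\rangle=(Ax,Ax)_H=\|Ax\|_H^2$. Since this pairing is a nonnegative real number, the elementary duality bound $|\langle\phi,x\rangle|\leqslant\|\phi\|_{X^*}\|x\|_X$ with $\phi=A^*Ax$ yields $\|Ax\|_H^2\leqslant\|x\|_X\|A^*Ax\|_{X^*}$ at once. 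This step uses neither completeness of $X$ nor closedness of the range, and it already shows that the supremum in the sharpness claim is at most $1$.

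The substance is the sharpness, and this is where the hypotheses enter. First I would pass to the quotient $Y:=X/\ker A$ (a Banach space, $\ker A$ being closed as the kernel of a closed operator) and to $K:=\mathrm{Ran}\,A$, a closed subspace of $H$ and hence a Hilbert space. The operator $A$ descends to an injective closed operator $\bar A:\mathcal{D}(\bar A)\subset Y\to K$ that maps onto $K$; its inverse $S:=\bar A^{-1}:K\to Y$ is then everywhere defined and closed (the inverse of a closed injective operator is closed), hence bounded by the closed graph theorem. This bounded inverse is precisely what closed range supplies and what drives the whole argument.

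The key observation is that each ratio in the supremum depends only on the class $\bar x=x+\ker A$: since $A^*Ax\in\mathrm{Ran}\,A^*=(\ker A)^\perp$ (the closed range theorem again) it annihilates $\ker A$, so both $\langle A^*Ax,x\rangle$ and $\|A^*Ax\|_{X^*}=\|\overline{A^*Ax}\|_{Y^*}$ are constant on the coset, while $\inf_{k\in\ker A}\|x-k\|_X=\|\bar x\|_Y$. Writing $v=Ax\in K$, so that $\bar x=Sv$ and $\overline{A^*Ax}=\bar A^*v$, the supremum of the ratio over a coset equals $\|v\|_H^2/(\|Sv\|_Y\,\|\bar A^*v\|_{Y^*})$. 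Substituting $v=S^*\psi$ with $\psi\in Y^*$ and using $\langle\psi,SS^*\psi\rangle=\|S^*\psi\|_K^2$, I would recognize this as exactly the bounded Solyar ratio for the bounded operator $L:=S^*:Y^*\to K$ into the Hilbert space $K$, because the reflexivity of $K$ gives $L^*L\psi=S^{**}S^*\psi=\iota_Y(SS^*\psi)$, whose norm in $Y^{**}$ is $\|SS^*\psi\|_Y$. Proposition~\ref{Solyar_bounded} applied to $L$ then produces $\psi_n$ driving this ratio to $1$; lifting $v_n=S^*\psi_n$ to representatives $x_n\in\mathcal{D}(A)$ with $\|x_n\|_X\leqslant(1+\tfrac1n)\|\bar x_n\|_Y$ yields admissible $x_n\in\mathcal{D}(A^*A)\setminus\ker A$ realizing the supremum $1$.

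I expect the main obstacle to be the bookkeeping of this reduction rather than any single sharp estimate: one must verify that $\bar A$ is closed, that $S^*\psi$ always lands in $\mathcal{D}(A^*)$ with $\overline{A^*(S^*\psi)}=\psi$ (a short computation from $S(Az)=\bar z$ for $z\in\mathcal{D}(A)$), and — most delicately — one must handle the adjoint $L^*$ through the bidual $Y^{**}$, since $X$ and hence $Y$ need not be reflexive, exploiting the reflexivity of the Hilbert space $K$ rather than that of $Y$. Once these identifications are in place, the sharpness follows directly from the already-established bounded case.
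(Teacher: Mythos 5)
Your proof is correct and takes essentially the same route as the paper's: your $\bar A$, $S$, and $L=S^{*}$ are exactly the paper's $C$, $B$, and $B^{*}$, and both arguments reduce sharpness to Proposition~\ref{Solyar_bounded} applied to the adjoint of the bounded inverse furnished by the closed-range hypothesis and the closed graph theorem, handle $L^{*}L$ in $Y^{**}$ via reflexivity of the Hilbert side (the paper's ``$B^{**}=B$'' is your $S^{**}S^{*}\psi=\iota_{Y}(SS^{*}\psi)$), and finish by choosing near-minimal coset representatives. The only difference is cosmetic: you work directly with the closed range $K=R(A)$ as the target Hilbert space, while the paper phrases the same reduction through a quotient of $H$.
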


\begin{proof}
    Indeed, for every $x\in\mathcal{D}\left(A^*A\right)$,
    \[
        \|Ax\|_H^2 = (Ax, Ax) = \left<A^*Ax, x\right> \leqslant \|x\|_X\|A^*Ax\|_{X^*},
    \]
    hence inequality~\eqref{Solyar_unbounded_inequality} follows. 
    
    Let us prove sharpness of inequality~\eqref{Solyar_unbounded_inequality}. For an operator $T$ denote by $\ker{T}$ its kernel and $R(T)$ -- its range, and by $E(z,M)_Z := \inf\limits_{u\in M}\|z-u\|_Z$ denote the best approximation of element $z$ in normed space $Z$ by a subset $M\subset Z$. Range $G = R(A)$ of operator $A$ is closed by assumption and its kernel $F = \ker{A}$ is closed as $A$ is closed. Also $R(A^*) = F^\perp$ (see, {\it e.g.}~\cite[\S5.5, Theorem]{Yos_95}). Consider quotient spaces $Y = X/F$ and $H' = H/G$ with natural norms $\|x+F\|_{Y} = E(x,F)_X$, $x\in X$, and $\|h+G\|_{H'} = E(h,G)_{H}$, $h\in H$. It is well-known that $Y$ is Banach space (see, {\it e.g.}~\cite[\S1.11]{Yos_95}). Consider operator $C:Y\to H'$ defined by the rule: $C(x+F) = Ax + G$, $x\in\mathcal{D}(A)$. Clearly, $\mathcal{D}(C) = \mathcal{D}(A) + F$, $C$ is closed operator, $\ker{C} = \{F\}$ consists of null element in $Y$ and $R(C) = H'$. Also, for every $h\in\mathcal{D}\left(A^*\right)$, $\left<C^*(h+G), x+F\right>_Y = \left<A^*h,x\right>$, $x\in X$ and $\left\|C^*(h+G)\right\|_{Y^*} = \left\|A^*h\right\|_{X^*}$.
    
    By Theorem in~\cite[\S5.5]{Yos_95}, $\ker{C^*} = \{G\}$ -- null element in $H'$ -- and $R\left(C^*\right) = Y^*$. Hence, by Corollary~1 in~\cite[\S5.5]{Yos_95} $C$ has bounded inverse $B:H'\to Y$. Then $B^*$ is also bounded (see~\cite[\S5.1, Theorem~2$'$]{Yos_95}). Let $\varepsilon\in(0,1)$. By Proposition~\ref{Solyar_bounded}, there exists $y^*\in Y^*$ such that
    \[
        \left\|B^*y^*\right\|_{H'}^2 > (1-\varepsilon) \left\|y^*\right\|_{Y^*}\left\|B^{**}B^*y^*\right\|_{Y^{**}}.
    \]
    Remark that $B^{**} = B$, as operator $B$ is bounded, and, since $BB^*y^*\in Y$, it follows that $\left\|BB^*y^*\right\|_{Y^{**}} = \left\|BB^*y^*\right\|_Y$. Since $R(C^*) = Y^*$ and $R(C) = H'$, there exist $h\in \mathcal{D}\left(A^*\right)$ and $x\in\mathcal{D}(A)$ such that $y^*=C^*(h+G)$ and $h+G=C(x+F)$. Evidently, $x\in\mathcal{D}\left(A^*A\right)$ and it is not difficult to verify that $B^*$ is inverse of $C^*$. Then $B^*x^*=B^*C^*(h+G)=h+G=C(x+F)$, $y^* = C^*C(x+F)$ and $BB^*y^*=BB^*C^*C(x+F)=BC(x+F)=x+F$. As a result, we obtain
    \[
        \left\|C(x+F)\right\|_{H'}^2 > (1-\varepsilon)\|x+F\|_{Y}\left\|C^*C(x+F)\right\|_{Y^*}.
    \]
    Next, let $x'\in x+F$ be such that $E(x,F)_X > (1-\varepsilon)\|x'\|_X$. Clearly, $x'\in\mathcal{D}\left(A^*A\right)$, $C(x+F)=Ax'+G$ and $\left\|C^*C(x+F)\right\|_{Y^*}=\left\|A^*Ax'\right\|_{X^*}$. This implies that
    \[
        \|Ax'\|_H^2 \geqslant \|Ax'+G\|_{H'}^2 = \|C(x+F)\|_{H'}^2 > (1-\varepsilon)^2\|x'\|_X\left\|A^*Ax'\right\|_{X^*},
    \]
    which finishes the proof.
\end{proof}


Let us now establish consequences from Theorem~\ref{Solyar_unbounded} for the powers of the Laplace-Beltrami operators on compact Riemannian manifolds $\mathcal{M}$ without boundary. For $1\leqslant p\leqslant \infty$, by $\overline{L}_p(\mathcal{M})$ denote the space of measurable functions $x:\mathcal{M}\to\mathbb{C}$ endowed with the norm
\[
    \|x\|_p = \|x\|_{\overline{L}_p(\mathcal{M})} := \left\{
        \begin{array}{ll}
            \displaystyle \left(\int_\mathcal{M}|x(t)|^p\,{\rm d}\mu_g(t)\right)^{\frac 1p}, & 1\leqslant p< \infty,\\
            \text{esssup}\,\left\{|x(t)|\,:\,t\in\mathcal{M}\right\}, & p=\infty.
        \end{array}
    \right.
\]
Following~\cite{Aubin}, we denote by $\overline{H}^k_p(\mathcal{M})$, $k\in\mathbb{N}$, the Sobolev space of functions $x:\mathcal{M}\to\mathbb{C}$ with the semi-norm
\[
    \|x\|_{\overline{H}_p^k(\mathcal{M})} := \left\|\Delta^{\frac{k}{2}}x\right\|_{p}.
\]


\begin{theorem}
\label{Solyar_type}
    Let $d\in\mathbb{N}$, $\mathcal{M}$ be a $d$-dimensional $C^\infty$ compact Riemannian manifold without boundary, $1\leqslant p < \infty$, $q = p/(p-1)$ and $k\in\mathbb{N}$ be such that $k\geqslant \frac{d}{2}\left(\frac{1}{2} - \frac{1}{p}\right)$. Then, for every $x\in \overline{H}^{4k}_{q}(\mathcal{M})$, there holds sharp inequality:
    \begin{equation}
    \label{Solyar_type_inequality}
        \left\|\Delta^kx\right\|_{2} \leqslant \sqrt{\|x\|_{p} \left\|\Delta^{2k}x\right\|_{q}}.
    \end{equation}
\end{theorem}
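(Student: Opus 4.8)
The plan is to derive Theorem~\ref{Solyar_type} as a direct specialization of the abstract Solyar-type inequality in Theorem~\ref{Solyar_unbounded}. I would take $H=\overline{L}_2(\mathcal{M})$ (which is the Hilbert space $L_2(\mathcal{M})$), $X=\overline{L}_p(\mathcal{M})$ (a Banach space since $1\leqslant p<\infty$), so that $X^*=\overline{L}_q(\mathcal{M})$ isometrically with $q=p/(p-1)$, and let $A:=\Delta^k$ be regarded as an operator from $X$ into $H$ with domain $\mathcal{D}(A)=\{x\in\overline{L}_p(\mathcal{M}):\Delta^kx\in\overline{L}_2(\mathcal{M})\}$. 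Under these identifications the conclusion $\|Ax\|_H^2\leqslant\|x\|_X\|A^*Ax\|_{X^*}$ of Theorem~\ref{Solyar_unbounded} reads $\|\Delta^kx\|_2^2\leqslant\|x\|_p\,\|\Delta^{2k}x\|_q$, and extracting the square root gives exactly~\eqref{Solyar_type_inequality}; the sharpness statement is inherited verbatim, with $\ker A$ equal to the constant functions. It therefore remains to verify the three structural hypotheses of Theorem~\ref{Solyar_unbounded}: that $A$ is densely defined and closed, that $A$ has closed range, and that $A^*A=\Delta^{2k}$ on $\overline{H}^{4k}_q(\mathcal{M})$.

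For density and closedness, the span of the eigenfunctions $\{\varphi_j\}$ lies in $\mathcal{D}(A)$ and is dense in $\overline{L}_p(\mathcal{M})$, so $A$ is densely defined. Since $\Delta^k$ is a local differential operator of order $2k$ it is continuous on distributions, so if $x_n\to x$ in $\overline{L}_p$ and $\Delta^kx_n\to y$ in $\overline{L}_2$, then $\Delta^kx=y\in\overline{L}_2$ and $x\in\mathcal{D}(A)$; hence $A$ is closed. For the closed range I would show $R(A)=\{y\in\overline{L}_2(\mathcal{M}):(y,\varphi_0)=0\}$, the orthogonal complement of the constants, which is closed of codimension one. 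The inclusion $\subseteq$ is immediate from the spectral expansion, since $\mu_0=0$ annihilates the $\varphi_0$-component. For $\supseteq$, given such $y$ I put $x=\sum_{j\geqslant1}(y,\varphi_j)\mu_j^{-2k}\varphi_j\in\overline{H}^{2k}_2(\mathcal{M})$; the Sobolev embedding $\overline{H}^{2k}_2(\mathcal{M})\hookrightarrow\overline{L}_p(\mathcal{M})$ then places $x$ in $X$ with $\Delta^kx=y$, and this embedding holds precisely under the hypothesis $k\geqslant\tfrac d2(\tfrac12-\tfrac1p)$.

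To identify $A^*A$, I would compute the adjoint in the $\overline{L}_p$--$\overline{L}_2$ duality: for admissible $h$ and $x$ one has $\langle A^*h,x\rangle=(h,\Delta^kx)=(\Delta^kh,x)$ by self-adjointness of $\Delta^k$ on $\overline{L}_2$, so $A^*h=\Delta^kh$ viewed as an element of $X^*=\overline{L}_q(\mathcal{M})$, and consequently $A^*Ax=\Delta^{2k}x$ with $\|A^*Ax\|_{X^*}=\|\Delta^{2k}x\|_q$. Moreover $\mathcal{D}(A^*A)=\overline{H}^{4k}_q(\mathcal{M})$: if $\Delta^{2k}x\in\overline{L}_q$, then $\Delta^kx\in\overline{H}^{2k}_q(\mathcal{M})\hookrightarrow\overline{L}_2(\mathcal{M})$ (again by the embedding valid under $k\geqslant\tfrac d2(\tfrac12-\tfrac1p)$, now in its dual form), and this in turn forces $x\in\overline{H}^{2k}_2(\mathcal{M})\hookrightarrow\overline{L}_p(\mathcal{M})$. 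Thus the hypotheses of Theorem~\ref{Solyar_unbounded} are met on all of $\overline{H}^{4k}_q(\mathcal{M})$, and the inequality together with its sharpness follows.

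The step I expect to require the most care is the identification of $A^*$ in this mixed-space setting: because $A$ maps between the two different spaces $\overline{L}_p$ and $\overline{L}_2$, one must justify that $A^*$ acts again as $\Delta^k$ but now with values in $\overline{L}_q$, and that the functional $A^*Ax$ is represented isometrically by the function $\Delta^{2k}x$. The second delicate point is confirming that the single numerical condition $k\geqslant\tfrac d2(\tfrac12-\tfrac1p)$ simultaneously produces both Sobolev embeddings $\overline{H}^{2k}_2\hookrightarrow\overline{L}_p$ and $\overline{H}^{2k}_q\hookrightarrow\overline{L}_2$ used above; these are dual to each other, which explains why the same inequality governs both, and the case $p<2$ is automatic since the right-hand side is then nonpositive while $\mathcal{M}$ has finite volume.
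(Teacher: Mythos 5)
Your proposal is correct and follows essentially the same route as the paper's own proof: both specialize Theorem~\ref{Solyar_unbounded} to $A=\Delta^k:\overline{L}_p(\mathcal{M})\to\overline{L}_2(\mathcal{M})$, verifying closedness, closed range (the mean-zero subspace of $\overline{L}_2(\mathcal{M})$) and formal self-adjointness, with the Sobolev embeddings $\overline{H}^{4k}_q(\mathcal{M})\subset\overline{H}^{2k}_2(\mathcal{M})\subset\overline{L}_p(\mathcal{M})$ (the paper cites \cite[Theorem~2.20]{Aubin}) furnished exactly by the hypothesis $k\geqslant\frac{d}{2}\left(\frac12-\frac1p\right)$. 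The only deviation is cosmetic: you take the maximal domain and spell out the closed-range and adjoint identifications that the paper dispatches with ``clearly,'' and under the stated embeddings your maximal domain coincides with the paper's choice $\mathcal{D}(A)=\overline{H}^{2k}_2(\mathcal{M})$.
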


\begin{proof}
    Indeed, by~\cite[Theorem~2.20]{Aubin} and compactness of $\mathcal{M}$, chain of imbeddings $\overline{H}^{4k}_q(\mathcal{M})\subset \overline{H}^{2k}_2(\mathcal{M})\subset\overline{L}_p(\mathcal{M})$ follows. Clearly, operator $A = \Delta^k:\overline{L}_p(\mathcal{M})\to \overline{L}_2(\mathcal{M})$ with domain $\mathcal{D}\left(A\right) = \overline{H}^{2k}_2(\mathcal{M})$ is closed, attains all values from the closed subspace $\left\{x\in \overline{L}_2(\mathcal{M})\,:\,\int_{\mathcal{M}}x(t)\,{\rm d}\mu_g(t) = 0\right\}$, and is formally self-adjoint, {\it i.e.} $A^*x = Ax$ for every $x\in \overline{H}^{2k}_2(\mathcal{M})$. Hence, by Theorem~\ref{Solyar_unbounded}, for every $x\in\overline{H}^{4k}_q(\mathcal{M})$, sharp inequality~\eqref{Solyar_type_inequality} holds true.
\end{proof}

\begin{rem}
    Theorem~\ref{Solyar_type} contains the result by V.\,G.~Solyar~\cite{Sol_76} for $\mathcal{M} = \mathbb{T}$. Moreover, in case $k > \frac{d}{4}$, $\overline{H}^{2k}_2(\mathcal{M})\subset C(\mathcal{M})$ (see~\cite[Theorem~2.20]{Aubin}) and ideas from paper~\cite{Sol_76} can be used to prove that there exists extremal function in inequality~\eqref{Solyar_type_inequality} providing $p > 1$. 
\end{rem}

\begin{rem}
    Theorem~\ref{Solyar_type} holds true for fractional values of $k > \frac{d}{4}$, where $\overline{H}^{2k}_q(\mathcal{M})$ in case $2k\not\in\mathbb{N}$, stands for Bessel-potential spaces (see~\cite{Tri_85}).
\end{rem}

\section*{Acknowledgments}

This work was supported by Simons Collaboration Grant N.~210363.


\begin{thebibliography}{99}
    \bibitem{HarLit_12} G.\,H.~Hardy, J.\,E.~Littlewood, Contribution to the arithmetic theory of series, {\it Proc. London Math. Soc.}, 1912, {\bf s2-11}:1, 411--478.
	\bibitem{BKKP} V.\,F.~Babenko, N.\,P.~Korneichuk, V.\,A.~Kofanov, S.\,A.~Pichugov, Inequalities for derivatives and their applications, Kyiv: Naukova Dumka, 2003. 
	\bibitem{Lan_13} E.~Landau, Einige Ungleichungen fur zweimal differenzierbare Funktionen / E.~Landau, {\it Proc. London Math. Soc.}, 1913, {\bf 13}, 43--49.
	\bibitem{Had_14} J.~Hadamard, Sur le modul\'e maximum d'une fonction et de ses d\'eriv\'ees, {\it Soc. math. de France, Comptes rendus des S\'eances}, 1914.
	\bibitem{HarLitPol_34} G.\,H.~Hardy, J.\,E.~Littlewood, G.~P\'olya, Inequalities, Cambridge: University Press, 1934.
	\bibitem{Shi_37} G.\,E.~Shilov, On inequalities between derivatives, {\it Sb. scientific students works MGU}, 1937, 17--27.
	\bibitem{Kol_38} A.~Kolmogoroff, Une generalisation de l'in\'egalit\'e de M.\,J.~Hadamard entre les bornes superieures des d\'eriv\'ees successives d'une fonction, {\it C.\,R.~Acad. Sci.}, 1938, {\bf 207}, 764--765.
	\bibitem{Kol_39} A.\,N.~Kolmogorov, On inequalities between the upper bounds of the successive derivatives of an arbitrary function on the infinite interval, {\it Uch. Zap. MGU. Math.}, 1939, {\bf 30}:3, 3--16.
	\bibitem{Hor_55} L.~H\"ormander, On the theory of general partial differential operators, {\it Acta Math.}, 1955, {\bf 94}, 161--248.
	\bibitem{Yos_95} K.~Yosida, Functional Analysis, Berlin Heidelberg: Springer-Verlag, 1995, 504~p.
	\bibitem{Mit_Pec_Fin_91} D.\,S.~Mitrinovic, J.~Pecaric, A.\,M.~Fink, Inequalities Involving Functions and Their Integrals and Derivatives, Mathematics and its Applications, Springer Netherlands, {\bf 53}, 1991, 587~p.
	\bibitem{Kwo_Zet_92} M.\,K.~Kwong, A.~Zettl, Norm Inequalities for Derivatives and Differences, Series: Lecture Notes in Mathematics, Springer Berlin Heidelberg, 1992, 160~p.
	\bibitem{AreGab_95} V.\,V.~Arestov, V.\,N.~Gabushin, Best approximation of unbounded operators by bounded operators, {\it Russian Mathematics (Izvestiya VUZ. Matematika)}, 1995, {\bf 39}:11, 38--63.
	\bibitem{Are_96} V.\,V.~Arestov, Approximation of unbounded operators by bounded ones, and related extremal problems, {\it Russian Mathematical Surveys}, 1996, {\bf 51}:6, 1093--1126.
	\bibitem{BabKofPic_97} V.\,F.~Babenko, V.\,A.~Kofanov, S.\,A.~Pichugov, Multivariate inequalities of Kolmogorov type and their applications, {\it Proc. of Mannheim Conf. “Multivariate Approximation and Splines”, 1996 / G. Nurnberger, J. Schmidt, G.Walz (eds.).}, 1997, 1--12.
	\bibitem{BabKofPic_98} V.\,F.~Babenko, V.\,A.~Kofanov, S.\,A.~Pichugov, Inequalities of Kolmogorov Type and Some Their Applications in Approximation Theory, {\it Rendiconti del Circolo Matematico di Palermo Serie II, Suppl.}, 1998, {\bf 52}, 223--237.
	\bibitem{Ily_05} A.\,A.~Ilyin, Lieb-Thirring integral inequalities and their applications to attractors of the Navier-Stokes equations, {\it Sbornik: Mathematics}, 2005, {\bf 196}:1, 29--61.
	\bibitem{IlyTit_06} A.\,A.~Ilyin, E.\,S.~Titi, Sharp Estimates for the Number of Degrees of Freedom for the Damped-Driven 2-D Navier-Stokes Equations, {\it J. Nonlinear Science}, 2006, {\bf 16}:3, 233--253.
	\bibitem{Tai_68} L.\,V.~Taikov, Kolmogorov-type inequalities and the best formulas for numerical differentiation, {\it Math. Notes}, 1968, {\bf 4}:2, 631--634.
	\bibitem{BusTih_79} A.\,P.~Buslaev, V.\,M.~Tikhomirov, Inequalities for derivatives in the multidimensional case, {\it Math. Notes}, 1979, {\bf 25}:1, 32--40.
	\bibitem{Lub_60} Yu.\,I.~Lyubich, On inequalities between powers of a linear operator, {\it Izv. Akad. Nauk SSSR Ser. Mat.}, 1960, {\bf 24}:6, 825--864.
	\bibitem{Kup_75} N.\,P.~Kuptsov, Kolmogorov estimates for derivatives in $L_2[0,\infty)$, {\it Proc. Steklov Inst. Math.}, 1977, {\bf 138}, 101--125.
	\bibitem{Gab_69} V.\,N.~Gabushin, On the best approximation of the differentiation operator on the half-line, {\it Math. Notes}, 1969, {\bf 6}:5, 804--810.
	\bibitem{Kal_04} G.\,A.~Kalyabin, Sharp Constants in Inequalities for Intermediate Derivatives (the Gabushin Case), {\it Func. Anal. Appl.}, 2004, {\bf 38}:3, 184--191.
	\bibitem{Lun_Ori_09} A.\,A.~Lunev, L.\,L.~Oridoroga, Exact constants in generalized inequalities for intermediate derivatives, {\it Math. Notes}, 2009. {\bf 85}:5, 703--711.
	\bibitem{MagTih_81} G.\,G.~Magaril-Il'yaev, V.\,M.~Tihomirov, On the Kolmogorov inequality for fractional derivatives on the half-line, {\it Anal. Math.}, 1981, {\bf 7}:1, 37--47.
	\bibitem{Sha_90} A.\,Yu.~Shadrin, Inequalities of Kolmogorov type and estimates of spline interpolation on periodic classes $W^2_m$, {\it Math. Notes}, 1990, {\bf 48}:4, 1058--1063.
	\bibitem{Ily_98} A.\,A.~Ilyin, Best constants in a class of polymultiplicative inequalities for derivatives, {\it Sb. Math.}, 1998, {\bf 189}:9, 1335--1359.
	\bibitem{Lun_09} A.\,A.~Lunev, Exact constants in the inequalities for intermediate derivatives in $n$-dimensional space, {\it Math. Notes}, 2009, {\bf 85}:3, 458--462.
	\bibitem{BabBil_10} V.\,F.~Babenko, R.\,O.~Bilichenko, Taikov type inequalities for self-adjoint operators in Hilbert space, {\it Trudi IPMM NAN Ukraine}, 2010, {\bf 21}, 1--7.
	\bibitem{BabBil_11} V.\,F.~Babenko, R.\,O.~Bilichenko, Taikov type inequalities for powers of normal operators in Hilbert spaces, {\it Visnyk DNU, Ser. Matem.}, 2011, {\bf 16}, 3--7.
	\bibitem{BabKri_14} V.\,F.~Babenko, N.\,A.~Kriachko, On Hardy-Littlewood-P\'olya type inequalities for operators in Hilbert space, {\it Visnyk DNU, Ser. Matem.}, 2014, {\bf 19}, 1--5.
	\bibitem{BabBabKri_16} V.~Babenko, Yu.~Babenko, N.~Kriachko, Inequalities of Hardy-Littlewood-P\'olya Type for Functions of Operators and Their Applications, {\it J. Math. Anal. Appl.}, 2016, {\bf 444}:1, 512--526.
	\bibitem{Sol_76} V.\,G.~Solyar, An inequality for the norms of a function and of its derivatives, {\it Soviet Math. (Iz. VUZ)}, 1976, {\bf 20}:2, 58--62.
	\bibitem{BabLigShu_06} V.\,F.~Babenko, A.\,A.~Ligun, A.\,A.~Shumeiko, On the sharp inequalities of Kolmogorov type for operators in Hilbert spaces, {\it Vistnyk DNU, Ser. Matem.}, 2006, {\bf 11}, 9--13.
	\bibitem{Ste_67} S.\,B.~Stechkin, Best approximation of linear operators, {\it Math. Notes}, 1967, {\bf 1}:2, 91--99. 
	\bibitem{Bab_Bil_12} V.\,F.~Babenko, R.\,O.~Bilichenko, Approximation of unbounded functionals by bounded ones in Hilbert space, {\it Visnyk DNU, Ser. Matem.}, 2012, {\bf 17}, 3--10.
	\bibitem{DanSwa_63} N.~Dunford, J.\,T.~Schwartz, Linear operators. Part II: Spectral Theory, Interscience, New York, 1963.
	\bibitem{Cve_12} Z.~Cvetkovski, Inequalities: Theorems, Techniques and Selected Problems; Springer: Berlin/Heidelberg, Germany, 2012.
	\bibitem{Raf_83} S.\,Z.~Rafal'son, An inequality between the norms of a function and its derivatives in integral metrics, {\it Math. Notes}, 1983, {\bf 33}:1, 38--41.
	\bibitem{BerRaf_85} I.\,V.~Berdnikova, S.\,Z.~Rafal'son, Some inequalities between norms of a function and its derivatives in integral metrics, {\it Soviet Math. (Iz. VUZ)}, 1985, {\bf 29}:12, 1--5.
	\bibitem{BabRas_00} V.\,F.~Babenko, T.\,M.~Rassias, On Exact Inequalities of Hardy-Littlewood-Polya Type, {\it J. Math. Anal. Appl.}, 2000, {\bf 245}, 570--593.
	\bibitem{BabKozSko_20} V.\,F.~Babenko, O.\,V.~Kozynenko, D.\,S.~Skorokhodov, On Carlson type inequalities for spaces $L_{2,r;\alpha,\beta}((-1,1))$ and $L_{2,e^{-t^2}}(\mathbb{R})$, {\it Researches in Math.}, 2019, {\bf 27}:2, 45--58.
	\bibitem{Besse} A.\,L.~Besse, Manifolds all of whose Geodesics are Closed, A Series of Modern Surveys in Mathematics, Springer-Verlag, Berlin Heidelberg New York, 1978.
	\bibitem{SamKilMar} S.\,G.~Samko, A.\,A.~Kilbas, O.\,I.~Marichev, Fractional Integrals and Derivatives: Theory and Applications, CRC Press, 1993, 1006~p.
	\bibitem{Hor_68} L.~H\"ormander, The spectral function of an elliptic operator, {\it Acta Math.}, 1968, {\bf 121}, 193--218.
	\bibitem{Kus_19} A.\,K.~Kushpel, On the Lebesgue constants, {\it Ukr. Math. J.}, 2019, {\bf 71}:8, 1224--1233.
	\bibitem{BabKofPich_96} V.\,F.~Babenko, V.\,A.~Kofanov, S.\,A.~Pichugov, On inequalities of Landau-Hadamard-Kolmogorov type for $L_2$-norm of intermediate derivatives, {\it East J. Approx.}, 1996, {\bf 2}:3, 343--368.
	\bibitem{BabSam_03} V.~Babenko, O.~Samaan, On inequalities of Kolmogorov type for operators acting into Hilbert space and some applications, {\it Visnyk DNU, Ser. Matem.}, 2003, {\bf 8}, 11--18.
	\bibitem{Aubin} T.~Aubin, Some Nonlinear Problems in Riemannian Geometry, Springer-Verlag, 1998.
	\bibitem{Tri_85} H.~Triebel, Spaces of Besov-Hardy-Sobolev type on complete Riemannian manifolds, {\it Arkiv f\"or Matematik}, 1985, {\bf 24}:1-2, 299--337.
\end{thebibliography}
\end{document}